\NeedsTeXFormat{LaTeX2e}

\documentclass{amsart}
\usepackage{amsmath}
\usepackage{a4wide}
\usepackage{amssymb}
\usepackage{cite} 

\usepackage{mathrsfs}
\usepackage{enumitem}

\newtheorem{thm}{Theorem}
\newtheorem*{thm*}{Theorem}

\newtheorem{prop}[thm]{Proposition}
\newtheorem{cor}[thm]{Corollary}

\theoremstyle{definition}

\newtheorem{rem}[thm]{Remark}

\newcommand{\ds}{\mathrm{\,d}s}
\newcommand{\dt}{\mathrm{\,d}t}
\newcommand{\dx}{\mathrm{\,d}x}
\newcommand{\dy}{\mathrm{\,d}y}
\newcommand{\dz}{\mathrm{\,d}z}

\newcommand{\esssup}{\operatornamewithlimits{ess\, sup\,}}

\newcommand{\R}{\mathbb{R}}

\newcommand{\N}{\mathbb{N}}

\newcommand{\I}{\mathbb{I}}
\newcommand{\Z}{\mathbb{Z}}
\newcommand{\Zm}{{\mathbb{Z}_\mu}}

\newcommand{\MM}{\mathscr M_+}
\newcommand{\MMM}{\mathscr M^\downarrow}
\newcommand{\A}{\mathbb{A}}

\newcommand{\Lpv}{L^p(v)}
\newcommand{\Lqw}{L^q(w)}

\newcommand{\fii}{\varphi}

\newcommand{\jq}{\frac1q}
\newcommand{\jp}{\frac1p}
\newcommand{\jpp}{\frac1{p'}}
\newcommand{\jr}{\frac1r}
\renewcommand{\rq}{\frac rq}
\newcommand{\rp}{\frac rp}
\newcommand{\rpp}{\frac r{p'}}
\newcommand{\rqq}{\frac r{q'}}

\renewcommand{\dj}{{\Delta_j}}
\newcommand{\dk}{{\Delta_k}}
\newcommand{\Uq}{U^q}
\newcommand{\Ur}{U^r}
\newcommand{\Upp}{U^{p'}}

\newcommand{\Uj}{U^q(\Delta_j)}
\newcommand{\Uk}{U^q(\Delta_k)}
\newcommand{\Uxt}{U^q(x,t)}

\newcommand{\Uxtk}{U^q(x,t_k)}
\newcommand{\Utkt}{U^q(t_k,t)}
\newcommand{\tm}{{t_\mu}}
\newcommand{\tk}{{t_k}}

\newcommand{\vp}{v^{1\!-p'}}

\newcommand{\Tj}{\Theta^j}
\newcommand{\Tk}{\Theta^k}
\newcommand{\Tkrq}{\Theta^\frac{kr}{q}}

\renewcommand{\theta}{\vartheta}

\renewcommand{\mod}{\operatorname{\,mod}}

\newcommand{\lt}{\left(}
\newcommand{\rt}{\right)}

\newcounter{bcount}
\newcommand{\Bdef}[1]{\refstepcounter{bcount}\label{bcounter#1}}
\newcommand{\B}[1]{B_{\ref{bcounter#1}}}

\newtoks\by
\newtoks\paper
\newtoks\book
\newtoks\jour
\newtoks\yr
\newtoks\pages
\newtoks\vol
\newtoks\publ
\newtoks\eds
\newtoks\proc
\def\ota{{\hbox{???}}}
\def\cLear{\by=\ota\paper=\ota\book=\ota\jour=\ota\yr=\ota
\pages=\ota\vol=\ota\publ=\ota}
\def\endpaper{\textsc{\the\by}, \textnormal{`\the\paper'},
\textit{\the\jour} \textnormal{\the\vol} (\the\yr), \the\pages.\cLear}
\def\endbook{\textsc{\the\by}, \textit{\the\book} (\the\publ).\cLear}
\def\endprep{\textsc{\the\by}, \textnormal{`\the\paper'} (preprint).\cLear}
\def\endproc{\textsc{\the\by}, \textit{\the\paper}, \the\publ, \the\pages.\cLear}
\def\name#1#2{#1\,#2}
\def\et{ and }

\begin{document}

\title[Hardy-type operators with a~kernel]{Boundedness of Hardy-type operators with a~kernel:\\ integral weighted conditions for the case $0<q<1\le p<\!\infty$}
\author{Martin K\v{r}epela}
\address{Karlstad University, Faculty of Health, Science and Technology, Department of Mathematics and Computer Science, 651 88 Karlstad, Sweden}
\address{Charles University, Faculty of Mathematics and Physics, Department of Mathematical Analysis, Sokolovsk\'{a} 83, 186 75 Praha 8, Czech Republic}
\email{martin.krepela@kau.se}
\subjclass[2010]{47G10, 26D15}

\begin{abstract}
  Let $U:[0,\infty)^2 \to [0,\infty)$ be a~measurable kernel satisfying:
  \begin{enumerate}
    \item[\rm(i)\ ] $U(x,y)$ is nonincreasing in $x$ and nondecreasing in $y$;
    \item[\rm(ii)\ ] there exists a~constant $\theta>0$ such that
            \[
              U(x,z) \le \theta\left( U(x,y)+U(y,z) \right)
            \]
          for all $0\le x<y<z<\infty$;
    \item[\rm(iii)\ ]
        $U(0,y)>0$ for all $y>0$.
  \end{enumerate}
  Let $0<q<1< p <\infty$. We prove that the weighted inequality
  \[  
    \left( \int_0^\infty \left( \int_0^t f(x)U(x,t)\dx \right)^q w(t) \dt \right)^\jq \le C \left( \int_0^\infty f^p(t)v(t)\dt \right)^\jp
  \]
holds for all nonnegative measurable functions $f$ on $(0,\infty)$ if and only if 
  \[
    \lt \int_0^\infty \left( \int_t^\infty w(x)\dx \right)^\rp w(t) \left( \int_0^t U^{p'}(z,t)\vp(z)\dz \right)^\rpp \dt \rt^\jr <\infty
  \]
and
  \[
    \lt \int_0^\infty \left( \int_t^\infty w(x) \Uq(t,x) \dx \right)^\rp w(t) \sup_{z\in(0,t)} \Uq(z,t)\left( \int_0^z \vp(s)\ds \right)^\rpp \dt \rt^\jr <\infty,
  \]
where $p':=\frac{p}{p-1}$ and $r:=\frac{pq}{p-q}$. Analogous conditions for the case $p=1$ and for the dual version of the inequality are also presented. 
\end{abstract}
\maketitle

\section{Introduction}

Operators of the general form 
  \[
    Tf(x) = \int_0^\infty f(y)U(x,y) \dy,
  \]
where $U$ is a~\emph{kernel}, play an~indispensable role in various areas of analysis. The means of their investigation, naturally, greatly depend on additional properties of the kernel $U$. 

In the present article, we study the so-called \emph{Hardy-type} operators 
  \begin{equation}\label{700}
    Hf(x) = \int_0^x f(y)U(y,x) \dy, \qquad \textnormal{and}\qquad  H^*f(x) = \int_x^\infty f(y)U(x,y) \dy,
  \end{equation}
where the kernel $U:[0,\infty)^2\to [0,\infty)$ is a~measurable function which has the following properties: 
  \begin{enumerate}
    \item[\rm(i)]
      $U(x,y)$ is nonincreasing in $x$ and nondecreasing in $y$; 
    \item[\rm(ii)] there exists a~constant $\theta>0$ such that for all $0\le x<y<z<\infty$ it holds
      \[
        U(x,z) \le \theta\left( U(x,y)+U(y,z) \right);
      \]
    \item[\rm(iii)]
      $U(0,y)>0$ for all $y>0$.
  \end{enumerate}
If $\theta>0$ and $U$ is a~function satisfying the conditions above with the given parameter $\theta$ in point (ii), then we, for the sake of simplicity, call $U$ a~\emph{$\theta$-regular kernel}.  
                                                                      
The simplest case of a~$\theta$-regular kernel $U$ is the constant $U\equiv 1$, with which $H$ and $H^*$ become the ordinary Hardy and Copson (``dual Hardy'') operators, respectively. Other examples of $\theta$-regular kernels include the Riemann-Liouville kernel 
  \[
    U(x,y)=(y-x)^\alpha, \quad \alpha>0,
  \]
the logarithmic kernel 
  \[
    U(x,y)=\log^\alpha \lt \frac yx \rt, \quad \alpha>0,
  \]
and the kernels 
  \[
    U(x,y)=\int_x^y u(t)\dt \textnormal{\quad and\quad }  U(x,y)=\esssup_{t\in(x,y)}u(t),
  \]
where $u$ is a~given nonnegative measurable function. These operators find applications, for instance, in the theory of differentiability of functions, interpolation theory and more topics involving function spaces. The two last-named examples of $\theta$- regular kernels prove to be particularly useful in a~study of the so-called iterated Hardy operators \cite{GKPS,GS}, for example.

The particular aspect we investigate in this paper is boundedness of the operators $H$ and $H^*$ with a~$\theta$-regular kernel $U$ between weighted Lebesgue spaces. In order to define these spaces, we need to introduce several auxiliary terms first. 

Throughout the text, by a~\emph{measurable function} we always mean a~Lebesgue measurable function (on an~appropriate subset of $\R$). The symbol $\MM$ denotes the cone of all nonnegative measurable functions on $(0,\infty)$. A~\emph{weight} is a~function $w\in\MM$ on $(0,\infty)$ such that 
  \[
    0<\int_0^t w(s)\ds<\infty \textnormal{\quad for\ all\quad} t>0.
  \]
Finally, if $v$ is a~weight and $p\in(0,\infty]$, then the \emph{weighted Lebesgue space} $\Lpv=\Lpv(0,\infty)$ is defined as the set of all real-valued measurable functions $f$ on $(0,\infty)$ such that
  \begin{align*}
    \|f\|_{\Lpv} & :=\ \left(\int_0^\infty |f(t)|^p v(t)\dt\right)^\jp <\infty & \textnormal{if\ }p<\infty, \\
    \|f\|_{L^{\!\infty}(v)} & :=\ \esssup_{t\in(0,\infty)} |f(t)|v(t) <\infty & \textnormal{if\ }p=\infty.
  \end{align*}
Note that if $p\in(0,1)$, then $(\Lpv, \|\cdot\|_{\Lpv})$ is in general not a~normed linear space because of the absence of the Minkowski inequality in this case. However, as we deal only with the case $1\le p<\infty$ anyway, this detail is not of our concern here.

Throughout the text, if $p\in(0,1)\cup(1,\infty)$, then $p'$ is defined by $p'=\frac{p}{p-1}$. Analogous notation is used for $q'$. 

In the following, assume that $\theta\in(0,\infty)$, $U$ is a~$\theta$-regular kernel, $H$ is the corresponding operator from \eqref{700} and $v$, $w$ are weights. Boundedness of the operator $H$ between $\Lpv$ and $\Lqw$ was completely characterized for $p,q\in[1,\infty]$. The authors credited for this work are Bloom and Kerman \cite{BK}, Oinarov \cite{Oi} and Stepanov \cite{S1}. The results of \cite{Oi}, for instance, have the following form. 
\begin{thm*}[\!\!{\cite[Theorem 1.1]{Oi}}]
  Let $1<p\le q<\infty$. Then $H:\Lpv\to\Lqw$ is bounded if and only if 
    \[
      E_1 := \sup_{t\in(0,\infty)} \lt \int_t^\infty \Uq(t,x)w(x)\dx \rt^\jq \lt \int_0^t \vp(x)\dx \rt^\jpp < \infty
    \]
  and
    \[
      E_2 := \sup_{t\in(0,\infty)} \lt \int_t^\infty w(x)\dx \rt^\jq \lt \int_0^t \Upp(x,t)\vp(x)\dx \rt^\jpp <\infty.
    \]
  Moreover, the least constant $C$ such that the inequality
    \begin{equation}\label{701}
      \| Hf\|_{\Lqw} \le C \|f\|_{\Lpv}
    \end{equation}
  holds for all $f\in\MM$ satisfies $C\approx E_1 + E_2$. 
\end{thm*}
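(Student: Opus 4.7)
The plan has two directions. For \textbf{necessity}, I would test the inequality \eqref{701} against two families of functions. To extract $E_1$, fix $t>0$ and take $f = \vp \chi_{(0,t)}$. Then $\|f\|_{\Lpv}^p = \int_0^t \vp$, and for each $x>t$ the monotonicity of $U$ in its first argument gives $Hf(x) \ge U(t,x) \int_0^t \vp(y)\dy$. Substituting into \eqref{701} and simplifying yields $E_1 \le C$. To extract $E_2$, take $g(y) = U^{p'-1}(y,t)\vp(y)\chi_{(0,t)}(y)$; using $(p'-1)p = p'$ one computes $\|g\|_{\Lpv}^p = \int_0^t U^{p'}(y,t)\vp(y)\dy$, and the monotonicity $U(y,x) \ge U(y,t)$ for $y<t<x$ gives $Hg(x) \ge \int_0^t U^{p'}(y,t)\vp(y)\dy$ on $(t,\infty)$. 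Substituting again into \eqref{701} delivers $E_2 \le C$.

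For \textbf{sufficiency}, my plan is to discretize with a sequence $\{\tk\}_{k\in\Z}$ adapted to $f$, chosen so that $\int_0^{\tk} f(y) U(y,\tk)\dy = 2^k$. (A~standard reduction—replacing $f$ by $f\chi_{(1/N,N)}$ and letting $N\to\infty$—handles the degenerate cases.) Then I would split
\[
  \|Hf\|_{\Lqw}^q = \sum_{k\in\Z} \int_{\tk}^{t_{k+1}} (Hf(x))^q w(x) \dx,
\]
and for $x \in (\tk, t_{k+1})$ decompose $Hf(x) = \int_0^{\tk} f(y)U(y,x)\dy + \int_{\tk}^x f(y)U(y,x)\dy =: A_k(x) + B_k(x)$. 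On $A_k$, the $\theta$-regularity $U(y,x) \le \theta(U(y,\tk) + U(\tk,x))$ gives $A_k(x) \le \theta \cdot 2^k + \theta\, U(\tk,x) \int_0^{\tk} f(y)\dy$. The local piece $B_k$ is bounded by a~kernel-free Hardy-type inequality on the short interval $(\tk, t_{k+1})$, since on this interval property (i) forces $U(y,x)$ to be comparable to $U(\tk, t_{k+1})$ up to $\theta$-multiplicative constants.

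The main obstacle will be combining these pieces so that the two structurally different conditions emerge naturally. The ``anchored'' contribution $\sum_k 2^{kq}\int_{\tk}^{t_{k+1}} w$, via the defining relation for $\tk$, Hölder's inequality in $y$ (applied to the factorization of $2^k$), and the $\ell^p\hra\ell^q$ embedding (legitimate because $q \ge p$), reduces to a~bound by $E_2^q \|f\|_{\Lpv}^q$. The ``mixed'' contribution $\sum_k \bigl(\int_0^{\tk} f(y)\dy\bigr)^q \int_{\tk}^{t_{k+1}} U^q(\tk,x) w(x)\dx$, again by Hölder in $y$ and summation in $k$, is dominated by $E_1^q \|f\|_{\Lpv}^q$. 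Summing all contributions yields $C \le \operatorname{const}(p,q,\theta) (E_1 + E_2)$, completing the proof. The delicate points will be the legitimacy of the discretizing sequence (handled by approximation) and the precise verification that the sums telescope or can be controlled by Hölder with exponent $q/p\ge 1$; the monotonicity property~(i) ensures that local pieces never blow up.
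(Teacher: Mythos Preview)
This theorem is not proved in the paper; it is quoted from Oinarov \cite{Oi} as background for the case $1<p\le q<\infty$, while the paper's own contribution concerns $0<q<1\le p<\infty$. So there is no ``paper's proof'' to compare against, only the original literature (Oinarov, Stepanov, Bloom--Kerman).

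That said, your sketch follows the standard route used in those references, and the necessity argument is correct as written. In the sufficiency direction, however, two points deserve care. First, the assertion that on $(\tk,t_{k+1})$ property~(i) makes $U(y,x)$ \emph{comparable} to $U(\tk,t_{k+1})$ is false in general: monotonicity gives only the upper bound $U(y,x)\le U(\tk,t_{k+1})$, and $U(y,x)$ may be arbitrarily small there. Second, even granting that upper bound, your proposed ``kernel-free Hardy inequality on $(\tk,t_{k+1})$'' for the local piece $B_k$ would require the Muckenhoupt constant
\[
  U(\tk,t_{k+1})\sup_{s\in(\tk,t_{k+1})}\Bigl(\int_s^{t_{k+1}}\!w\Bigr)^{1/q}\Bigl(\int_{\tk}^{s}\!\vp\Bigr)^{1/p'}
\]
to be dominated by $E_1+E_2$, and this is not obvious (the factor $U(\tk,t_{k+1})$ does not match the kernel structure inside $E_1$ or $E_2$ at an interior point $s$).

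The clean fix is to bypass the $A_k/B_k$ split entirely: for $x\in(\tk,t_{k+1})$ one has, by monotonicity of $U$ in its second argument,
\[
  Hf(x)=\int_0^x f(y)U(y,x)\dy \le \int_0^{t_{k+1}} f(y)U(y,t_{k+1})\dy = 2^{k+1},
\]
so $\|Hf\|_{\Lqw}^q \lesssim \sum_k 2^{kq}\int_{\tk}^{t_{k+1}} w$. One then writes $2^{k}\approx 2^{k}-2^{k-1}$ and decomposes the difference using the $\theta$-regularity \mbox{$U(y,\tk)\le\theta\bigl(U(y,t_{k-1})+U(t_{k-1},\tk)\bigr)$}; the two resulting pieces are exactly the ones controlled by $E_2$ and $E_1$ respectively after H\"older and the $\ell^p\hookrightarrow\ell^q$ embedding. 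This is essentially the argument in \cite{Oi,S1}.
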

\begin{thm*}[\!\!{\cite[Theorem 1.2]{Oi}}]
  Let $1<q<p<\infty$ and $r:=\frac{pq}{p-q}$. Then $H:\Lpv\to\Lqw$ is bounded if and only if 
    \[
      E_3 := \lt \int_0^\infty \lt \int_t^\infty \Uq(t,x)w(x)\dx \rt^\rq \lt \int_0^t \vp(x)\dx \rt^\rqq \vp(t) \dt \rt^\jr < \infty
    \]
  and
    \[
      E_4 := \lt \int_0^\infty \lt \int_t^\infty w(x)\dx \rt^\rp w(t) \lt \int_0^t \Upp(x,t)\vp(x)\dx \rt^\rpp \dt \rt^\jr <\infty.
    \]
  Moreover, the least constant $C$ such that \eqref{701} holds for all $f\in\Lpv$ satisfies $C\approx E_3 + E_4$. 
\end{thm*}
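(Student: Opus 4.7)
The plan is to establish the characterization via a discretization argument exploiting the $\theta$-regularity of $U$, combined with classical Hölder and Minkowski inequalities. Since $q>1$, $\Lqw$ is a Banach space so Minkowski is available; and since $q<p$, the summability argument uses the conjugate pair $(p/q, r/q)$, which are indeed conjugate because the definition $r=pq/(p-q)$ gives $q/p+q/r=1$.

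For sufficiency I would pick a covering sequence $\{\tk\}_{k\in\Z}$ balanced with respect to $\vp$, for instance satisfying $\int_0^{\tk}\vp(x)\dx = 2^k$, and partition $(0,\infty)$ into $\dk = [\tk, t_{k+1})$. For $t\in\dk$, the $\theta$-regularity applied to $x<\tk<t$ yields
\[
\int_0^t f(x)U(x,t)\dx \le \theta\int_0^{\tk} f(x)U(x,\tk)\dx + \theta\,U(\tk,t)\int_0^{\tk} f(x)\dx + \int_{\tk}^t f(x)U(x,t)\dx,
\]
so Minkowski in $\Lqw$ splits $\|Hf\|_{\Lqw}$ into three contributions. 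In each of the first two, Hölder in $x$ with exponents $(p,p')$ extracts $\|f\chi_{(0,\tk)}\|_{\Lpv}$, leaving either $(\int_0^{\tk} U^{p'}(x,\tk)\vp(x)\dx)^{1/p'}$ or $(\int_0^{\tk}\vp)^{1/p'}$; the outer $t$-integration on $\dk$ contributes $(\int_{\dk}w)^{1/q}$ or $(\int_{\dk}U^q(\tk,t)w(t)\dt)^{1/q}$ respectively. To sum over $k$ I would set $F_k=\int_0^{\tk}f^pv$, use the subadditivity of $x\mapsto x^{q/p}$ (valid since $q/p<1$) to bound $F_k^{q/p} \le \sum_{j\le k-1}(F_{j+1}-F_j)^{q/p}$, interchange the order of summation, and then apply discrete Hölder with exponents $(p/q, r/q)$. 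This produces $\|f\|_{\Lpv}^q$ on one side and the discrete analogues of $E_4$ (from the first piece) and $E_3$ (from the second) on the other. An antidiscretization step, exploiting the dyadic balance of $\{\tk\}$ together with the quasi-monotonicity of $t\mapsto\int_t^\infty U^q(t,x)w(x)\dx$ and $t\mapsto\int_0^t U^{p'}(x,t)\vp(x)\dx$ (both consequences of $\theta$-regularity and the monotonicity of $U$), converts these discrete sums into the continuous integrals $E_3$, $E_4$.

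Necessity is handled by testing against canonical saturators: $f = \vp\chi_{(0,\tau)}$ combined with integration over $\tau$ with an appropriate weight recovers $E_4$, while $f(x) = \vp(x)U^{p'-1}(x,\tau)\chi_{(0,\tau)}(x)$ saturates the inner Hölder inequality and recovers $E_3$. In each case the outer $\Lqw$-integration is restricted to $t>\tau$ and uses the monotonicity of $U$ in its second variable to produce the required weight factor. The main obstacle I anticipate is the local piece $\int_{\tk}^t f(x)U(x,t)\dx$ on $\dk$, a residual Hardy operator not directly handled by the $\theta$-regular splitting; it must be absorbed into $E_3$ or $E_4$ either by a self-improving bootstrap (reapplying the discretization within each $\dk$) or by bounding it in terms of the endpoint data at $\tk$ and $t_{k+1}$ via monotonicity of $U$. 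The antidiscretization step, while conceptually standard, is a secondary technical point that demands careful bookkeeping to keep the constant under control.
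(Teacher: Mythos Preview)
The theorem you attempted is not proved in this paper. It is quoted in the Introduction as a known result, attributed to Oinarov \cite[Theorem~1.2]{Oi}, and serves only as background for the paper's actual contribution, which concerns the different parameter range $0<q<1\le p<\infty$ (Theorems~\ref{7} and~\ref{144}). There is therefore no proof in the paper to compare your proposal against.

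That said, your outline is broadly in the spirit of how the $1<q<p<\infty$ case is handled in the literature (see \cite{Oi,S1,S2}). A few remarks on the sketch itself. The splitting via $\theta$-regularity and the use of the discrete H\"older pair $(p/q,r/q)$ are standard and correct. However, the ``local piece'' $\int_{t_k}^t f(x)U(x,t)\,dx$ that you flag as an obstacle is genuinely nontrivial: it is \emph{not} dispatched by a bootstrap or by endpoint monotonicity alone, and in the original proofs this term is controlled by a further application of H\"older together with a careful estimate that again produces $E_4$. Your subadditivity step $F_k^{q/p}\le\sum_{j\le k-1}(F_{j+1}-F_j)^{q/p}$ is fine, but the subsequent ``interchange and discrete H\"older'' hides a geometric-decay argument (the sequence $\int_0^{t_k}v^{1-p'}$ is dyadic by construction, which is what makes the resulting double sum collapse). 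Finally, the necessity tests you propose do not directly yield $E_3$ and $E_4$ as written: one typically needs to integrate the pointwise test over the parameter $\tau$ against a suitable weight and invoke Fubini, which you allude to but do not make explicit. None of these are fatal, but the proposal as it stands is an outline rather than a proof.
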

The conditions obtained in \cite{BK,S1} have a~slightly different form, a~more detailed comparison between them is found in \cite{S1}. 

As for the ``limit cases'', conditions for the case $p=\infty$ and $q\in(0,\infty]$ are obtained very easily, the same applies to the case $q=1$ and $p\in[1,\infty)$ in which one simply uses the Fubini theorem. Yet another possible choice of parameters is $p=1$ and $q\in(1,\infty]$. It was (at least for $q<\infty$) included in \cite[Theorem 1.2]{Oi} and the conditions may be recovered from that article by correctly interpreting the expressions involving the symbol $p'$ in there. Another option is to follow the more general theorem \cite[Chapter XI, Theorem 4]{KA}. 

If $0<p<1$, then the operator $H$ can never be bounded  (provided that $U$, $v$, $w$ are nontrivial, which is always assumed here). The problem in here lies in the fact that for each $t>0$ there exists $f_t\in L^p(v)$ which is not locally integrable at the point $t$. For more details, see e.g.~\cite{L2}.

No such difficulty arises if $0<q<1\le p<\infty$. In this case, $H$ may indeed be bounded between $\Lpv$ and $\Lqw$ and it is perfectly justified to ask for the conditions under which this occurs. As for the known answers to this question, the situation is however much worse than in the other cases.

When assumed $U\equiv 1$, i.e.~for the ordinary Hardy operator, the boundedness characterization was found by Sinnamon \cite{Si91} and it corresponds to the condition $E_3<\infty$ (with $U\equiv 1$, of course). In the general case, in \cite{S1} it was shown that the condition $E_3<\infty$ is sufficient but not necessary for $H:\Lpv\to\Lqw$ to be bounded, while the~condition   
  \[
    E_5 := \lt \int_0^\infty \lt \int_t^\infty \Uq(t,x)w(x)\dx \rt^\frac{p'}{q} \vp(t) \dt \rt^\jpp <\infty 
  \]
is necessary but not sufficient. For related counterexamples, see \cite{S2}. The fact that the two conditions do not meet is a~significant drawback. An~equivalent description of the optimal constant $C$ in \eqref{701} is usually substantial for the result to be applicable in any way.

Lai \cite{L} found equivalent conditions by proving that, with $0<q<1<p<\infty$, the operator $H$ is bounded from $\Lpv$ to $\Lqw$ if and only if
  \[
    \widetilde{D}_1 := \sup_{\{t_k\}} \sum_{k} \lt \int_{t_k}^{t_{(\!k\!+\!1\!)}} w(t)\dt \rt^\rq \lt \int_{t_{(\!k\!-\!1\!)}}^{\tk} \Upp(x,\tk) \vp(x)\dx \rt^\rpp<\infty
  \]   
as well as
  \[
    \widetilde{D}_2 := \sup_{\{t_k\}} \sum_{k} \lt \int_{\tk}^{t_{(\!k\!+\!1\!)}} w(t) \Uq(\tk,t)\dt \rt^\rq \lt \int_{t_{(\!k\!-\!1\!)}}^{t_k} \vp(x)\dx \rt^\rpp<\infty.
  \]
The suprema in here are taken over all covering sequences, i.e.~partitions of $(0,\infty)$ (see \cite{L} or Section 2 for the definitions), and $r:=\frac{pq}{p-q}$, as usual. Moreover, these conditions satisfy $\widetilde{D}_1 + \widetilde{D}_2 \approx C^r$ with the least $C$ such that \eqref{701} holds for all $f\in\MM$. Corresponding variants for $p=1$ are also provided in \cite{L}. The earlier use of similar partitioning techniques in the paper \cite{MRS} of Mart\'{i}n-Reyes and Sawyer should be also credited.

Unfortunately, even though the $\widetilde{D}$-conditions are both sufficient and necessary, they are only hardly verifiable due to their discrete form involving all possible covering sequences. This fact has hindered their use in various applications (see e.g.~\cite{GS}). In contrast, in the case $1<q<p<\infty$ it is known (see \cite{S2,L}) that $\widetilde{D}_1 + \widetilde{D}_2 \approx A_3^r + A_4^r$. This does not apply when $0<q<1\le p<\infty$, as shown by the results of \cite{S1} mentioned earlier. 

Rather recently, Prokhorov \cite{P} found conditions for $0<q<1\le p<\infty$ which have an~integral form but involve a~function $\zeta$ defined by
  \[
    \zeta(x):=\sup \left\{ y\in(0,\infty); \int_y^\infty \!w(t)\dt \ge (\theta^q\!+1) \int_x^\infty \! w(t)\dt \right\}, \quad x>0.
  \]
The conditions presented in \cite{P} even involve this function iterated three times. The presence of such an~implicit expression involving the weight $w$ virtually prevents any use of these conditions in applications which require further manipulation $w$ (see Section 4 for an~example). Finding explicit integral conditions for the case $0<q<1\le p<\infty$, which would have a~form comparable e.g.~to $E_3$ and $E_4$, hence remained an open problem.\\

In this paper, we solve this problem and provide the missing integral conditions. No additional assumptions on the weights $v$, $w$ and the $\theta$-regular kernel $U$ are required here, neither are any implicit expressions. The results are presented in Theorems \ref{7}, \ref{144} and Corollaries \ref{7*}, \ref{144*}. The proofs are based on the well-known method of \emph{dyadic discretization} (or \emph{blocking technique}, see \cite{GE} for a~basic introduction into this method). The particular variant of the technique employed here is essentially the same as the one used in \cite{K6}.

Concerning the structure of this paper, this introduction is followed by Section 2 where additional definitions and various auxiliary results are presented. Section 3 consists of the main results, their proofs and some related remarks. In the final Section 4 we present certain examples of applications of the results.

\section{Definitions and preliminaries}

Let us first introduce the remaining notation and terminology used in the paper. We say that $\I\subseteq \Z$ is an~\emph{index set} if there exist $k_{\min},\, k_{\max}\in\Z$ such that $k_{\min}\le k_{\max}$ and
    \[
      \I=\{ k\in\Z,\,k_{\min}\le k \le k_{\max}\}.
    \]
Moreover, we denote
  \[
    \I_0:=\I\setminus \{k_{\min},k_{\max}\}.
  \]
Let $\I$ be an~index set containing at least three indices. Then a~sequence of points $\{t_k\}_{k\in\I}$ is called a~\emph{covering sequence} if $t_{k_{\min}}=0$, $t_{k_{\max}}=\infty$ and $t_k<t_{(\!k\!+\!1\!)}$ whenever $k\in\I\setminus\{k_{\max}\}$. 

Next, let $z\in\N\,\cup\{0\}$ and $n,k\in\N$ are such that $0\le k <n$. We write $z\mod n = k$ if there exists $j\in\N\,\cup\{0\}$ such that $z=jn + k$. In other words, $k$ is the remainder after division of the number $z$ by the number $n$.\\

In the next part, we present various auxiliary results which will be needed later. The first two propositions are simple consequences of the H\"{o}lder inequality (for functions and sequences) and of the characterizations of the dual spaces to the spaces $L^p$ and $l^p$. For more details see e.g.~\cite{K6}.

\begin{prop}\label{107}
  Let $v$ be a~weight, $p\in(1,\infty)$ and $0\le x<y\le \infty$. Let $f$ and $\fii$ be nonnegative measurable functions on $(x,y)$. Then the H\"older inequality gives
    \[
      \int_x^y f(s)\fii(s)\ds \le \left(\int_x^y f^p(s)v(s)\ds \right)^\jp \left(\int_x^y \fii^{p'}\!(s)\vp(s)\ds \right)^\frac1{p'}.
    \]
  Moreover, if $\int_x^y \fii^{p'}\!(s)\vp(s)\ds <\infty$, then there exists a~nonnegative measurable function $g$ supported in $[x,y]$ and such that $\int_x^y g^p(s)v(s)\ds = 1$ and  
    \[
      \left(\int_x^y \fii^{p'}\!(s)\vp(s)\ds \right)^\frac1{p'} \le 2 \int_x^y g(s)\fii(s)\ds <\infty.
    \]
  If $\int_x^y \fii^{p'}\!(s)\vp(s)\ds =\infty$, then for every $M\in\N$ there exists a~nonnegative measurable function $h$ supported in $[x,y]$ and such that $\int_x^y h^p(s)v(s)\ds = 1$ and
    \[
      \int_x^y h(s)\fii(s)\ds > M.
    \]
\end{prop}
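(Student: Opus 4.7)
The first inequality is the standard weighted Hölder inequality; the plan is to write $f\fii = (fv^{\jp})(\fii v^{-\jp})$ and apply the classical Hölder inequality in the $p/p'$ pairing to these two factors.

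For the second claim, I would set $I := \int_x^y \fii^{p'}(s)\vp(s)\ds$ and first treat the generic case $I \in (0,\infty)$. The natural extremizer in Hölder is $g_0 := I^{-\jp}\, \fii^{p'-1}\, \vp$, supported in $(x,y)$. Using the identities $p(p'-1) = p'$ and $p(1-p')+1 = 1-p'$, a short computation gives $\int g_0^p v\ds = 1$ and $\int g_0 \fii\ds = I^{\jpp}$, so the inequality would formally hold with constant $1$. The subtle point is that $\vp$ is ill-defined on $\{v = 0\}$ and $\fii^{p'-1}$ on $\{\fii = \infty\}$, so instead I would use $g := c_n\, g_0\, \chi_{E_n}$, where $E_n := \{s \in (x,y) : 1/n < v(s),\ \fii(s) < n\}$ and $c_n$ is the normalization making $\int g^p v\ds = 1$; monotone convergence gives $\int_{E_n} \fii^{p'}\vp\ds \to I$, so for $n$ sufficiently large the claimed bound holds, with the factor $2$ absorbing the truncation loss, and $\int g\fii\ds$ is finite because $\fii$ is bounded on $E_n$. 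The degenerate case $I = 0$ is easy: any $g$ supported in $(x,y)$ with $\int g^p v\ds = 1$ trivially works, and such a $g$ exists because $v$ is a weight.

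For the third claim, assume $I = \infty$. Monotone convergence applied to the same sets $E_n$ yields $I_n := \int_{E_n} \fii^{p'}\vp\ds \to \infty$, so given $M$ I would pick $n$ with $I_n^{\jpp} \ge 2M$ and apply the finite case of the second claim to the bounded function $\fii\,\chi_{E_n}$ on $(x,y)$, obtaining $h$ supported in $E_n \subseteq [x,y]$ with $\int h^p v\ds = 1$ and $\int h\fii\ds \ge \tfrac{1}{2} I_n^{\jpp} \ge M$.

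There is no genuine obstacle here; the only delicate aspect is the book-keeping around the null sets $\{v = 0\}$ and $\{\fii = \infty\}$, handled uniformly by the truncation $E_n$, which is precisely why the constant $2$ (rather than $1$) appears in the statement.
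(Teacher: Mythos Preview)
Your approach is correct and is precisely the standard $L^p$--duality construction the paper invokes (the paper gives no detailed proof, only the reference to H\"older and the characterization of $(L^p)^*$). The explicit extremizer $g_0=I^{-1/p}\fii^{p'-1}\vp$ together with the truncation to $E_n$ is exactly how one makes this rigorous, and your observation that $I<\infty$ forces the bad sets $\{v=0,\ \fii>0\}$ and $\{\fii=\infty\}$ to be null is what makes the second claim go through.

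Two small repairs are needed in the third claim. First, when $y=\infty$ your sets $E_n$ may have infinite measure, so $I_n$ can be infinite and you cannot invoke the finite case; intersect $E_n$ with $(x,x+n)$ as well. Second, the assertion ``$I_n\to\infty$ by monotone convergence'' can fail: monotone convergence only gives $I_n\uparrow\int_{\{v>0,\ \fii<\infty\}}\fii^{p'}\vp$, and if $I=\infty$ solely because $|\{v=0,\ \fii>0\}|>0$ or $|\{\fii=\infty\}|>0$, this limit may be finite. These residual cases are handled directly: if $\fii=\infty$ on a set $S$ with $\int_S v>0$, take $h$ to be a normalized indicator of (a bounded-$v$ slice of) $S$; if $v=0$ on a set $S$ with $\fii>0$ there, add $K\chi_S$ to any fixed normalized $g_0$ (the added piece contributes nothing to $\int h^p v$ but drives $\int h\fii$ to infinity). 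With these patches your argument is complete.
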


\begin{prop}\label{108}
  Let $\I$ be an~index set. Let $\{a_k\}_{k\in\I}$ and $\{b_k\}_{k\in\I}$ be two nonnegative sequences. Assume that $0<q<p<\infty$. Then the H\"older inequality gives
    \[
      \left(\sum_{k\in\I} a_k^q b_k\right)^\frac1q \le \left(\sum_{k\in\I} a_k^p\right)^\frac1p \left( \sum_{k\in\I}b_k^\frac{p}{p-q}\right)^\frac{p-q}{pq}.
    \]  
  Moreover, if $\sum_{k\in\I}b_k^\frac{p}{p-q}\!<\infty$, then there exists a~nonnegative sequence $\{c_k\}_{k\in\I}$ such that \linebreak $\sum_{k\in\I} c_k^p=1$ and
    \[
      \left( \sum_{k\in\I}b_k^\frac{p}{p-q}\right)^\frac{p-q}{pq} \le 2 \left(\sum_{k\in\I} c_k^q b_k\right)^\frac1q <\infty.
    \]
\end{prop}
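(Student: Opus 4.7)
The statement has two parts, and both are essentially standard facts about H\"older's inequality together with the identification of its extremal sequence. I will treat them separately.

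For the first inequality, the plan is to apply the classical H\"older inequality for series to the sequences $\{a_k^q\}_{k\in\I}$ and $\{b_k\}_{k\in\I}$, using the conjugate exponent pair $P:=\frac{p}{q}$ and $P':=\frac{P}{P-1}=\frac{p}{p-q}$. Note that $P>1$ since $0<q<p$, so the exponents are admissible. This yields directly
\[
  \sum_{k\in\I} a_k^q b_k \le \Bigl(\sum_{k\in\I} a_k^p\Bigr)^{q/p} \Bigl(\sum_{k\in\I} b_k^{p/(p-q)}\Bigr)^{(p-q)/p},
\]
and taking the $\frac{1}{q}$-th power gives the stated inequality.

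For the second (sharpness) part, the strategy is to exhibit the well-known H\"older extremal. Set $S:=\sum_{k\in\I} b_k^{p/(p-q)}$; by hypothesis $S<\infty$. If $S=0$, then $b_k=0$ for every $k\in\I$ and any nonnegative sequence $\{c_k\}_{k\in\I}$ with $\sum c_k^p=1$ trivially satisfies both required relations (the middle quantity is zero). If $S>0$, define
\[
  c_k := S^{-1/p}\, b_k^{1/(p-q)}, \qquad k\in\I.
\]
A direct computation yields $\sum_{k\in\I} c_k^p = S^{-1}\sum_{k\in\I} b_k^{p/(p-q)} = 1$ and
\[
  c_k^q b_k = S^{-q/p} b_k^{q/(p-q)+1} = S^{-q/p} b_k^{p/(p-q)},
\]
whence $\sum_{k\in\I} c_k^q b_k = S^{-q/p}\cdot S = S^{(p-q)/p}$, so $\bigl(\sum_{k\in\I} c_k^q b_k\bigr)^{1/q} = S^{(p-q)/(pq)}$. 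This is in fact equality (without the factor $2$) in the desired estimate, and finiteness of the middle term is immediate from $S<\infty$.

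There is no real obstacle here; the proof is bookkeeping once the right exponents and extremal sequence are chosen. The factor $2$ in the statement is harmless slack and is kept only to match the form of Proposition~\ref{107} (where approximation of measurable functions could in principle require some loss).
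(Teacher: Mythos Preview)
Your proof is correct and is exactly the standard argument the paper has in mind: the paper does not give an explicit proof but states that the proposition is a simple consequence of the H\"older inequality for sequences and of the characterization of the dual of $\ell^p$, which is precisely what you carry out. Your observation that the extremal sequence actually yields equality (so the factor $2$ is not needed) is correct and consistent with the paper's remark that the constant $2$ is present only for uniformity with Proposition~\ref{107}.
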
 

Proposition \ref{107} has also a~variant for $p=1$, giving
  \[
    \int_x^y f(s)\fii(s)\ds \le \int_x^y f(s)v(s)\ds \ \esssup_{s\in(x,y)} \fii(s)v^{-1}(s)\ds.
  \]  
The remaining part of Proposition \ref{107} is then also true, provided that the expressions are modified accordingly.

The next proposition was proved in \cite[Proposition 2.1]{GHS}, more comments may be found e.g.\ in \cite{K6}. It is a~fundamental part of the discretization method.

\begin{prop}\label{3}
  Let $0<\alpha<\infty$ and $1<D<\infty$. Then there exists a~constant $C_{\alpha,D}\in(0,\infty)$ such that for any index set $\I$ and any two nonnegative sequences $\{b_k\}_{k\in\I}$ and $\{c_k\}_{k\in\I}$, satisfying 
    \[
      b_{(\!k\!+\!1\!)}\ge D\, b_k \textit{\ for all\ } k\in\I\setminus\{k_{\max}\},
    \]
  it holds
    \[
      \sum_{k=k_{\min}}^{k_{\max}} \left( \sum_{m=k}^{k_{\max}} c_m \right)^\alpha b_k  \le C_{\alpha,D} \sum_{k=k_{\min}}^{k_{\max}} c_k^\alpha b_k 
    \]
  and 
    \[
      \sum_{k=k_{\min}}^{k_{\max}} \left(\sup_{k\le m \le k_{\max}} \!\! c_m \right)^\alpha b_k  \le C_{\alpha,D}  \sum_{k=k_{\min}}^{k_{\max}} c_k^\alpha b_k .
    \]
\end{prop}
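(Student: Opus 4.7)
My plan is to reduce both inequalities to a single core estimate. Since all $c_m\ge 0$, the pointwise bound $\sup_{m\ge k} c_m \le \sum_{m\ge k} c_m$ yields $(\sup_{m\ge k} c_m)^\alpha \le \bigl(\sum_{m\ge k} c_m\bigr)^\alpha$, so the supremum version follows from the sum version with the same constant. Hence I would focus entirely on proving the first inequality.

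For the sum inequality I would split according to $\alpha$. In the case $0<\alpha\le 1$, the subadditivity $(x+y)^\alpha\le x^\alpha+y^\alpha$ reduces $\bigl(\sum_{m\ge k} c_m\bigr)^\alpha$ to $\sum_{m\ge k} c_m^\alpha$ termwise. After interchanging the order of summation via Fubini, the geometric growth $b_k\le D^{-(m-k)} b_m$---a direct consequence of the hypothesis $b_{k+1}\ge D b_k$---turns $\sum_{k\le m} b_k$ into a geometric series of ratio $1/D$, which yields the required bound with an explicit constant.

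For $\alpha>1$ the plan is to insert a geometric weight before applying H\"older. Fixing a small auxiliary parameter $\gamma\in(0,1/\alpha)$ and writing $c_m=(c_m D^{(m-k)\gamma})\cdot D^{-(m-k)\gamma}$, H\"older's inequality with exponents $\alpha$ and $\alpha'=\alpha/(\alpha-1)$ should give a bound of the form
\[
  \Bigl(\sum_{m\ge k} c_m\Bigr)^\alpha \le C_1 \sum_{m\ge k} c_m^\alpha D^{(m-k)\gamma\alpha},
\]
with $C_1$ coming from the convergent sum $\bigl(\sum_{j\ge 0} D^{-j\gamma\alpha'}\bigr)^{\alpha-1}$. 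Swapping the sums and using $b_k\le D^{-(m-k)} b_m$ once more turns the remaining inner sum into $b_m\sum_{j\ge 0} D^{j(\gamma\alpha-1)}$, which is finite precisely because $\gamma\alpha<1$; a concrete working choice such as $\gamma=1/(2\alpha)$ does the job.

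The main obstacle I anticipate is the balancing in the H\"older step: the parameter $\gamma$ must lie strictly between $0$ and $1/\alpha$ so that \emph{both} geometric series---the one appearing inside H\"older and the one arising after Fubini---converge simultaneously. Once this balance is set, tracking the constant $C_{\alpha,D}$ through both steps is routine, and the case $\alpha\le 1$ is best kept separate since the simple subadditivity argument yields a cleaner constant than the H\"older approach specialized there would.
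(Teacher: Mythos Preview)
Your argument is correct. The reduction of the supremum inequality to the sum inequality via $\sup_{m\ge k}c_m\le\sum_{m\ge k}c_m$ is valid for nonnegative sequences, and both cases of your proof of the sum inequality go through: for $0<\alpha\le1$ subadditivity plus Fubini and the geometric bound $b_k\le D^{-(m-k)}b_m$ give the constant $D/(D-1)$; for $\alpha>1$ your weighted H\"older step with any $\gamma\in(0,1/\alpha)$ (e.g.\ $\gamma=1/(2\alpha)$) produces two convergent geometric series exactly as you describe.

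Note, however, that the paper does not supply its own proof of this proposition: it is quoted from \cite[Proposition~2.1]{GHS} and stated without argument. So there is no ``paper's proof'' to compare against directly. For what it is worth, the paper \emph{does} prove the closely related supremum estimate in the subsequent proposition by a slightly different device, writing $c_m=c_m b_m^{-1/\alpha}\cdot b_m^{1/\alpha}$ and pulling the supremum of $c_m^\alpha b_m$ outside before summing the geometric tail $\sum_{m\ge k} b_m^{-1/\alpha}$. That trick avoids splitting into cases on $\alpha$, but it is tailored to the supremum version; your approach handles the sum version directly and then gets the supremum version for free, which is arguably cleaner here.
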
    

The following result is an~analogy to the previous proposition. We present a~simple proof, although the result is also well known.

\begin{prop}\label{89}
  Let $0<\alpha<\infty$ and $1<D<\infty$. Then there exists a~constant $C_{\alpha,D}\in(0,\infty)$ such that for any index set $\I$ and any two nonnegative sequences $\{b_k\}_{k\in\I}$ and $\{c_k\}_{k\in\I}$, satisfying 
    \[
      b_{(\!k\!+\!1\!)} \ge D\, b_k   \textit{\ for all\ } k\in\I\setminus\{k_{\max}\},
    \]
  it holds
    \[
      \sup_{k_{\min}\le k \le k_{\max}} \left( \sum_{m=k}^{k_{\max}} c_m \right)^\alpha b_k  \le C_{\alpha,D} \sup_{k_{\min}\le k \le k_{\max}} c_k^\alpha b_k. 
    \]
\end{prop}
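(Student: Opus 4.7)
The plan is to reduce everything pointwise to the sup on the right-hand side, using the geometric growth of $b_k$ to tame the sum $\sum_{m=k}^{k_{\max}} c_m$ by a geometric series.

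Set $S:=\sup_{k_{\min}\le k\le k_{\max}} c_k^\alpha b_k$. If $S=\infty$ there is nothing to prove, so assume $S<\infty$. The first step is to rewrite the bound on each term: from the definition of $S$ one has $c_m\le S^{1/\alpha}b_m^{-1/\alpha}$ for every admissible $m$.

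Next I would exploit the hypothesis $b_{k+1}\ge D\,b_k$, which iterates to $b_m\ge D^{m-k}b_k$ for all $m\ge k$. Consequently $b_m^{-1/\alpha}\le D^{-(m-k)/\alpha}b_k^{-1/\alpha}$, and summing the resulting geometric series gives
\[
\sum_{m=k}^{k_{\max}} c_m \;\le\; S^{1/\alpha}\,b_k^{-1/\alpha}\sum_{j=0}^{\infty} D^{-j/\alpha} \;=\; S^{1/\alpha}\,b_k^{-1/\alpha}\,\frac{1}{1-D^{-1/\alpha}}.
\]
Raising this to the $\alpha$-th power, multiplying by $b_k$, and taking the supremum in $k$ yields the claim with the explicit constant
\[
C_{\alpha,D}=\left(\frac{1}{1-D^{-1/\alpha}}\right)^{\!\alpha},
\]
which is finite because $D>1$ and $\alpha>0$.

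There is essentially no obstacle here: the only ingredient beyond the assumption is the summation of a geometric series, and the argument is genuinely pointwise in $k$ (no absorption, no induction over the length of $\I$ needed). The proof would fit comfortably in a few lines.
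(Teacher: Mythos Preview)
Your proof is correct and follows essentially the same approach as the paper: both arguments bound $c_m$ by $S^{1/\alpha}b_m^{-1/\alpha}$, use the growth hypothesis to control $\sum_{m\ge k} b_m^{-1/\alpha}$ by a geometric series times $b_k^{-1/\alpha}$, and arrive at the identical constant $C_{\alpha,D}=(1-D^{-1/\alpha})^{-\alpha}=D/(D^{1/\alpha}-1)^\alpha$. The only cosmetic difference is that the paper factors out $\sup_{i\ge k} c_i^\alpha b_i$ inside the computation before enlarging it to the global supremum, whereas you take the global supremum $S$ from the outset.
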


\begin{proof}
  It holds
    \begin{align*}
      \sup_{k_{\min}\le k \le k_{\max}} \lt \sum_{m=k}^{k_{\max}} c_m \rt^\alpha b_k     & =   \sup_{k_{\min}\le k \le k_{\max}} \lt \sum_{m=k}^{k_{\max}} c_m b_m^{-\!\frac1\alpha} b_m^{\!\frac1\alpha} \rt^\alpha b_k \\
                                                                                         & \le \sup_{k_{\min}\le k \le k_{\max}} \lt \sum_{m=k}^{k_{\max}} b_m^{-\!\frac1\alpha} \rt^\alpha b_k \sup_{k\le i\le k_{\max}} c_i^\alpha b_i \\
                                                                                         & \le \sup_{k_{\min}\le k \le k_{\max}} \lt b_k^{-\!\frac1\alpha} \sum_{m=0}^{k_{\max}-k} D^{-\!\frac m\alpha} \rt^\alpha b_k \sup_{k\le i\le k_{\max}} c_i^\alpha b_i^\alpha \\
                                                                                         & \le \lt \sum_{m=0}^{\infty} D^{-\!\frac m\alpha} \rt^\alpha \sup_{k_{\min}\le i\le k_{\max}} c_i^\alpha b_i^\alpha \\
                                                                                         & =   \frac D{(D^{\frac 1\alpha}\!-1)^\alpha} \sup_{k_{\min}\le i\le k_{\max}} c_i^\alpha b_i^\alpha. 
    \end{align*} 
\end{proof}

Applying Proposition \ref{3}, one obtains the next result. It is useful to handle inequalities involving a~$\theta$-regular kernel. 

\begin{prop}\label{4}
  Let $0<\alpha<\infty$ and $\theta\in [1,\infty)$. Let $U$ be a~$\theta$-regular kernel. Then there exists a~constant $C_{\alpha,\theta}\in(0,\infty)$ such that, for any index set $\I$, any increasing sequence $\{t_k\}_{k\in\I}$ of points from $(0,\infty]$ and any nonnegative sequence $\{a_k\}_{k\in\I\setminus\{k_{\max}\}}$ satisfying
    \begin{equation}\label{6}
      a_{(\!k\!+\!1\!)}\ge 2\theta^\alpha a_k \textit{\ for all\ } k\in\I\setminus\{k_{\max},k_{\max}-1\},
    \end{equation}
  it holds
    \[
      \sum_{k=k_{\min}}^{k_{\max}-1} a_k U^\alpha(t_k,t_{k_{\max}}) \le C_{\alpha,\theta} \sum_{k=k_{\min}}^{k_{\max}-1} a_k U^\alpha(t_k,t_{(\!k\!+\!1\!)}) .
    \]
\end{prop}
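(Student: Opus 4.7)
The plan is to reduce the claim to Proposition~\ref{3} by first iterating the $\theta$-regularity of $U$ to bound $U(t_k,t_{k_{\max}})$ by a weighted sum of the consecutive terms $U(t_j,t_{j+1})$, and then performing a substitution that absorbs the resulting geometric weight $\theta^{j-k}$ into a rescaled version of $\{a_k\}$.

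First, I would establish by induction on $N=k_{\max}-k$ the pointwise estimate
\[
    U(t_k,t_{k_{\max}}) \le \sum_{j=k}^{k_{\max}-1} \theta^{j-k+1}\,U(t_j,t_{j+1}),
\]
the induction step being an immediate application of $U(t_k,t_{k_{\max}})\le\theta U(t_k,t_{k+1})+\theta U(t_{k+1},t_{k_{\max}})$ combined with the hypothesis for the second term. The case $t_{k_{\max}}=\infty$ is handled by the convention $U(\,\cdot\,,\infty):=\lim_{y\to\infty}U(\,\cdot\,,y)$, with the inequality being trivial if this limit is infinite, since then $U^\alpha(t_{k_{\max}-1},t_{k_{\max}})$ already appears on the right-hand side.

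Raising to the power $\alpha$ and extracting a $k$-independent tail via $c_j:=\theta^jU(t_j,t_{j+1})$ gives
\[
    U^\alpha(t_k,t_{k_{\max}}) \le \theta^{\alpha(1-k)}\Bigl(\sum_{j=k}^{k_{\max}-1}c_j\Bigr)^\alpha.
\]
Multiplying by $a_k$, summing, and putting $b_k:=a_k\theta^{-\alpha k}$ reduces the problem to estimating $\sum_{k}b_k\bigl(\sum_{j\ge k}c_j\bigr)^\alpha$. The hypothesis~\eqref{6} rewrites cleanly as $b_{k+1}\ge 2b_k$, so Proposition~\ref{3} with $D=2$ (applied to the index set $\{k_{\min},\dots,k_{\max}-1\}$) yields the bound $C_{\alpha,2}\sum_{k}b_kc_k^\alpha$; and since $b_kc_k^\alpha=a_kU^\alpha(t_k,t_{k+1})$, undoing the substitutions produces the claim with $C_{\alpha,\theta}=\theta^\alpha C_{\alpha,2}$.

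The only real obstacle is spotting the correct rescaling: one must recognize that the factor $\theta^\alpha$ produced by iterating the regularity is precisely what the growth rate of $\{a_k\}$ is designed to cancel, explaining why hypothesis~\eqref{6} is stated with the specific constant $2\theta^\alpha$---the remaining factor $2$ being exactly what is needed to invoke Proposition~\ref{3}. Once this bookkeeping is in place, both the induction and the reduction are routine.
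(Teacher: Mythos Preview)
Your argument is correct and follows essentially the same route as the paper's proof: iterate the $\theta$-regularity to obtain $U(t_k,t_{k_{\max}})\le\theta^{-k}\sum_{m\ge k}\theta^{m+1}U(t_m,t_{m+1})$, substitute $b_k:=\theta^{-\alpha k}a_k$ so that \eqref{6} becomes $b_{k+1}\ge 2b_k$, and invoke Proposition~\ref{3} with $D=2$. The only addition is your remark on the case $t_{k_{\max}}=\infty$, which the paper leaves implicit.
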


\begin{proof}
  Naturally, we may assume that $\I$ contains at least three indices. Let $k\in\I\setminus\{k_{\max}\}$. By iterating the inequality
    \begin{equation}\label{61}
      U(x,z) \le \theta U(x,y) + \theta U(z,y) \qquad (x<y<z)
    \end{equation}
  from the definition of the $\theta$-regular kernel, we get
    \begin{align*} 
      U(t_k, t_{k_{\max}}) & \le \theta U(t_k, t_{(\!k\!+\!1\!)}) + \theta U(t_{(\!k\!+\!1\!)}, t_{k_{\max}}) \\
                           & \le \theta U(t_k, t_{(\!k\!+\!1\!)}) + \theta^2 U(t_{(\!k\!+\!1\!)}, t_{(k+2)}) + \theta^2 U(t_{(k+2)}, t_{k_{\max}}) \\
                           & \ \vdots \\
                           & \le \sum_{m=k}^{k_{\max}-1} \theta^{m-k+1} U(t_m,t_{(m+1)}) \\
                           & = \theta^{-k} \sum_{m=k}^{k_{\max}-1} \theta^{m+1} U(t_m,t_{(m+1)}). 
    \end{align*}
  Set $b_k := \theta^{-\alpha k}a_k$ for $k\in\I\setminus\{k_{\max}\}$. Then, by \eqref{6},  for all $k\in\I\setminus\{k_{\max},k_{\max}-1\}$ it holds $b_{(\!k\!+\!1\!)}\ge 2 b_k$. We obtain 
    \begin{align}
      \sum_{k=k_{\min}}^{k_{\max}-1} a_k U^\alpha(t_k,t_{k_{\max}}) & \le       \sum_{k=k_{\min}}^{k_{\max}-1} \theta^{-\alpha k} a_k \left( \sum_{m=k}^{k_{\max}-1} \theta^{m+1} U(t_m,t_{(m+1)} \right)^\alpha \nonumber\\
                                                                    & =         \sum_{k=k_{\min}}^{k_{\max}-1} b_k \left( \sum_{m=k}^{k_{\max}-1} \theta^{m+1} U(t_m,t_{(m+1)}) \right)^\alpha \nonumber\\
                                                                    & \le       C_\alpha \sum_{k=k_{\min}}^{k_{\max}-1} b_k \theta^{\alpha(\!k\!+\!1\!)} U^\alpha(t_k,t_{(\!k\!+\!1\!)}) \label{60}\\
                                                                    & =         C_\alpha \theta^\alpha \sum_{k=k_{\min}}^{k_{\max}-1} a_k U^\alpha(t_k,t_{(\!k\!+\!1\!)}). \nonumber
    \end{align}
  To get the inequality \eqref{60}, we used Proposition \ref{3}, setting $D:=2$ and $c_m:=U(t_m,t_{(m+1)})$ for the relevant indices $m$. This proves the statement. 
\end{proof}

Notice that, by the definitions at the beginning of this section, we consider only finite index sets (and therefore also finite covering sequences later on). However, all the results of this section hold for infinite sequences as well. This may be easily shown by using a~limit argument. We will nevertheless continue working with finite index sets and covering sequences only. The notion of supremum is used regularly even where it relates to a~finite set and where it therefore could be replaced by a~maximum. For further remarks see the last part of Section 3.  

The final basic result concerns $\theta$-regular kernels and reads as follows. 

\begin{prop}\label{59}
  Let $0\le a<b<c\le\infty$, $0<\alpha<\infty$ and $1\le \theta<\infty$. Let $U$ be a~$\theta$-regular kernel and $\psi$ be a~nonincreasing nonnegative function defined on $(0,\infty)$. Then
    \[
      \sup_{z\in[a,c)} U^\alpha(a,z)\psi(z) \le (1+\theta) \lt \sup_{z\in[a,b]} U^\alpha(a,z)\psi(z) + \sup_{z\in[b,c)} U^\alpha(b,z)\psi(z) \rt.
    \]
  If $c<\infty$, the result is unchanged if the intervals $[a,c)$ and $[b,c)$ in the suprema are replaced by $[a,c]$ and $[b,c]$, respectively.  
\end{prop}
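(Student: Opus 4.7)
The plan is to split the supremum on the left according to whether $z\in[a,b]$ or $z\in(b,c)$. On the former the quantity is trivially bounded by $\sup_{z\in[a,b]}U^\alpha(a,z)\psi(z)$---the first term on the right---so the substance of the argument lies in handling $z\in(b,c)$.

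For $z\in(b,c)$, I would rewrite $U^\alpha(a,z)\psi(z) = \bigl(U(a,z)\psi^{1/\alpha}(z)\bigr)^\alpha$ and work with the auxiliary quantity $U(a,z)\psi^{1/\alpha}(z)$. The $\theta$-regularity gives $U(a,z)\le\theta U(a,b)+\theta U(b,z)$, and multiplying through by $\psi^{1/\alpha}(z)$ yields two summands to be estimated separately. In the first, since $\psi$ is nonincreasing and $z>b$, replace $\psi^{1/\alpha}(z)$ by $\psi^{1/\alpha}(b)$; the resulting expression $\theta U(a,b)\psi^{1/\alpha}(b)$ is the value at $z=b$ of $\theta U(a,z)\psi^{1/\alpha}(z)$, hence is bounded by $\theta$ times the $(1/\alpha)$-th power of the first supremum on the right. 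The second summand $\theta U(b,z)\psi^{1/\alpha}(z)$ is immediately controlled by $\theta$ times the $(1/\alpha)$-th power of the second supremum on the right.

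Writing $S_1$ and $S_2$ for the two right-hand suprema, this pointwise estimate becomes $U(a,z)\psi^{1/\alpha}(z)\le\theta\bigl(S_1^{1/\alpha}+S_2^{1/\alpha}\bigr)$ for $z\in(b,c)$, and since $\theta\ge 1$ the trivial bound $S_1^{1/\alpha}$ valid on $[a,b]$ is subsumed. Taking the supremum over $z\in[a,c)$ and raising to the $\alpha$-th power yields the desired inequality. The closed-interval variant for $c<\infty$ follows at once from the monotonicity of $U(a,\cdot)$, $U(b,\cdot)$ and $\psi$, which forces the suprema over half-open and closed intervals to coincide.

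The main obstacle I anticipate is the bookkeeping of the $\alpha$-th power: distributing $(S_1^{1/\alpha}+S_2^{1/\alpha})^\alpha$ as a multiple of $S_1+S_2$ costs an $\alpha$-dependent factor, so producing the clean $\alpha$-independent constant $1+\theta$ as stated requires either restricting to $\alpha\in(0,1]$ (where $(x+y)^\alpha\le x^\alpha+y^\alpha$ and $\theta^\alpha\le\theta$ combine cleanly), or applying a dichotomy directly to $U^\alpha(a,z)\psi(z)$ based on which of $U(a,b)$ and $U(b,z)$ dominates in $U(a,z)\le\theta U(a,b)+\theta U(b,z)$, so that one avoids inflating $(x+y)^\alpha$ altogether.
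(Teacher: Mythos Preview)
Your approach is essentially identical to the paper's: split the supremum at $b$, apply $\theta$-regularity to handle $z\in(b,c)$, use monotonicity of $\psi$ to absorb $U^\alpha(a,b)\psi(b)$ into $S_1$, and collect terms. The paper writes this as a four-line chain of inequalities working directly with $U^\alpha(a,z)\psi(z)$ rather than passing through $U(a,z)\psi^{1/\alpha}(z)$, but the content is the same.

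Your worry about the constant is well founded, and in fact the paper's own proof glosses over exactly this point. The step
\[
  \sup_{z\in[b,c)} U^\alpha(a,z)\psi(z) \le \theta\, U^\alpha(a,b)\sup_{z\in[b,c)}\psi(z) + \theta \sup_{z\in[b,c)} U^\alpha(b,z)\psi(z)
\]
follows from $U(a,z)\le\theta\bigl(U(a,b)+U(b,z)\bigr)$ only via $(x+y)^\alpha\le x^\alpha+y^\alpha$ and $\theta^\alpha\le\theta$, both of which require $\alpha\le 1$. For $\alpha>1$ the stated constant $1+\theta$ actually fails: take $U(x,y)=y-x$ (a $1$-regular kernel), $\psi\equiv 1$, $a=0$, $b=1$, $c=2$, $\alpha=3$; then the left side is $8$ while the right side is $2(1+1)=4$. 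So the proposition as written is only correct for $\alpha\in(0,1]$; for general $\alpha$ one needs an $\alpha$-dependent constant such as $(2\theta)^\alpha$ (your dichotomy idea gives this). This is harmless for the paper's purposes, since every application of the proposition there uses $\alpha=q<1$, but your instinct that something is off for $\alpha>1$ is correct.
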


\begin{proof}
  The result is a~consequence to the following simple observation.
    \begin{align*}
      \sup_{z\in[a,c)} U^\alpha(a,z)\psi(z) & \le      \sup_{z\in[a,b]} U^\alpha(a,z)\psi(z) + \sup_{z\in[b,c)} U^\alpha(a,z)\psi(z) \\ 
                                            & \le      \sup_{z\in[a,b]} U^\alpha(a,z)\psi(z) + \theta U^\alpha(a,b) \sup_{z\in[b,c)}\psi(z) + \theta \sup_{z\in[b,c)} U^\alpha(b,z)\psi(z)\\ 
                                            & =        \sup_{z\in[a,b]} U^\alpha(a,z)\psi(z) + \theta U^\alpha(a,b) \psi(b) + \theta \sup_{z\in[b,c)} U^\alpha(b,z)\psi(z)\\ 
                                            & \le      (1+\theta) \lt \sup_{z\in[a,b]} U^\alpha(a,z)\psi(z) + \sup_{z\in[b,c)} U^\alpha(b,z)\psi(z) \rt. 
    \end{align*}
\end{proof}

\section{Main results}

This section contains the main theorems and their proofs. Remarks to the results and proof techniques can be found at the end of the section.

The notation $A\lesssim B$ means that $A\le C B$, where the constant $C$ may depend \emph{only} on the exponents $p$, $q$ and the parameter $\theta$. In particular, this $C$ is always independent on the weights $w$, $v$, on certain indices (such as $k$, $n$, $j$, $K$, $N$, $J$, $\mu,\dots$), on the number of summands involved in sums etc. We write $A\approx B$ if both $A\lesssim B$ and $B\lesssim A$.

\pagebreak
\begin{thm}\label{7}
  Let $0<q<1<p<\infty$, $r:=\frac{pq}{p-q}$ and $0<\theta<\infty$. Let $v$, $w$ be weights. Let $U$ be a~$\theta$-regular kernel. Then the following assertions are equivalent:
    \begin{enumerate}
      \item[\rm(i)]
        There exists a~constant $C\in(0,\infty)$ such that the inequality
          \begin{equation}\label{8}  
            \left( \int_0^\infty \lt \int_t^\infty f(x)U(t,x)\dx \rt^q w(t) \dt \right)^\jq \le C \lt \int_0^\infty f^p(t)v(t)\dt \rt^\jp
          \end{equation}
        holds for all functions $f\in\MM$. 
      \item[\rm(ii)]
        Both the conditions
          \[
            D_1 := \sup_{\substack{\{t_k\}_{k\in\I} \\ \textnormal{covering}\\ \textnormal{sequence}}} \sum_{k\in\I_0} \lt \int_{t_{(\!k\!-\!1\!)}}^{t_k} w(t)\dt \rt^\rq \lt \int_{\tk}^{t_{(\!k\!+\!1\!)}} \Upp(\tk,x) \vp(x)\dx \rt^\rpp<\infty
          \]   
        and
          \[
            D_2 := \sup_{\substack{\{t_k\}_{k\in\I} \\ \textnormal{covering}\\ \textnormal{sequence}}} \sum_{k\in\I_0} \lt \int_{t_{(\!k\!-\!1\!)}}^{t_k} w(t) \Uq(t,\tk)\dt \rt^\rq \lt \int_{\tk}^{t_{(\!k\!+\!1\!)}} \vp(x)\dx \rt^\rpp<\infty
          \]
        are satisfied.
      \item[\rm(iii)]
        Both the conditions   
          \[
            A_1 := \int_0^\infty \left( \int_0^t w(x)\dx \right)^\rp w(t) \left( \int_t^\infty U^{p'}(t,z)\vp(z)\dz \right)^\rpp \dt <\infty
          \]
        and
          \[
            A_2 := \int_0^\infty \left( \int_0^t w(x) \Uq(x,t) \dx \right)^\rp w(t) \sup_{z\in[t,\infty)} \Uq(t,z)\left( \int_z^\infty \vp(s)\ds \right)^\rpp \dt<\infty
          \] 
        are satisfied.    
    \end{enumerate}
  Moreover, if $C$ is the least constant such that \eqref{8} holds for all functions $f\in\MM$, then 
    \[
      C^r \approx D_1 + D_2 \approx A_1 + A_2.
    \]
\end{thm}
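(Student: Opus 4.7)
The plan is to prove (i) $\Leftrightarrow$ (ii) and (ii) $\Leftrightarrow$ (iii) separately, with the discrete conditions $D_1, D_2$ serving as a pivot between the operator inequality and the integral expressions $A_1, A_2$. Without loss of generality I take $\theta \ge 1$, fix $r = pq/(p-q)$, and write $W(t) := \int_0^t w(s) \ds$. Throughout, the operator on the left-hand side of \eqref{8} is the $H^*$ from \eqref{700}.

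For (i) $\Leftrightarrow$ (ii), I would adapt the discretization blueprint of Lai \cite{L}. For (i) $\Rightarrow$ (ii), given a covering sequence $\{t_k\}_{k \in \I}$, I test \eqref{8} against $f = \sum_k f_k$ where each $f_k$ is supported in $(t_k, t_{k+1})$ and, by Proposition \ref{107}, is chosen as the extremal function for the $L^{p'}$-H\"older inequality producing the factors in $D_1$ and $D_2$; the kernel monotonicity (property (i)) and triangle inequality (property (ii)) then extract the corresponding $L^q(w)$-factors on the left. For (ii) $\Rightarrow$ (i), given $f \in \MM$, I construct a dyadic covering sequence via the level sets of $t \mapsto \int_0^t (H^*f)^q w$, decompose the kernel using property (ii) into a ``local'' and a ``global'' part, apply Proposition \ref{108} (the $q < p$ H\"older duality for sequences), and re-sum using Propositions \ref{3} and \ref{4} to match the $\Lpv$-norm of $f$.

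For (ii) $\Leftrightarrow$ (iii), the observation is that the sum in $D_1$ discretizes the integral in $A_1$ along dyadic levels of $W$, and similarly for $D_2$ vs.\ $A_2$. For (iii) $\Rightarrow$ (ii), given any covering sequence, the elementary inequality $(W(t_k) - W(t_{k-1}))^\rq \le W(t_k)^\rq - W(t_{k-1})^\rq = \rq \int_{t_{k-1}}^{t_k} W(t)^\rp w(t) \dt$ (valid since $\rq \ge 1$), combined with the monotonicity of $t \mapsto \int_t^\infty \Upp(t,z)\vp(z)\dz$ (from property (i) of $U$), yields $D_1 \lesssim A_1$; Proposition \ref{59} handles the analogous passage for $A_2$. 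For (ii) $\Rightarrow$ (iii), I plug the specific dyadic sequence defined by $W(t_k) = 2^k$ into $D_1, D_2$ and invoke Propositions \ref{3}, \ref{4}, \ref{89} to anti-discretize the resulting sum back into an integral, recognizing $A_1$ and $A_2$ up to constants depending only on $p$, $q$, $\theta$.

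The main obstacle is the $A_2$ condition, which carries a supremum in $z$ inside the outer integral in $t$. Matching this supremum to the discrete structure of $D_2$ requires a two-tier dyadic decomposition: one scale governs the $w$-mass of each block, while within each block a second scale tracks the growth of $z \mapsto U^q(t_k, z)$. Proposition \ref{59} re-glues local suprema across blocks, Proposition \ref{89} reduces the outer supremum to a local maximum, and Proposition \ref{4} handles the $\theta$-iteration arising in telescoping the kernel between non-adjacent points. Aligning the exponents $\rq, \rp, \rpp$ simultaneously in both the $D_1 \leftrightarrow A_1$ and the $D_2 \leftrightarrow A_2$ passages, while keeping all constants uniform in $w$ and $v$, is where the bulk of the technical bookkeeping concentrates.
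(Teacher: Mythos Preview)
Your overall architecture matches the paper's: (i)$\Leftrightarrow$(ii) as a pivot, (iii)$\Rightarrow$(ii) via the elementary power/monotonicity estimate, and (ii)$\Rightarrow$(iii) by dyadic anti-discretization with a two-tier block structure. Two points of divergence are worth flagging.

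For (ii)$\Rightarrow$(i) you propose discretizing along level sets of $t\mapsto\int_0^t(H^*f)^q w$, i.e.\ Lai's $f$-dependent scheme. The paper instead runs the \emph{same} $f$-independent discretization ($\int_0^{t_k}w=\Theta^k$ plus the secondary $n$-block layer of Theorem~\ref{9}) that it later reuses verbatim for (ii)$\Rightarrow$(iii). Both routes are valid---the paper even remarks that (i)$\Leftrightarrow$(ii) is already in \cite{L}---but the paper's choice lets a single construction carry the entire proof, whereas your hybrid imports Lai's machinery on top of the $W$-dyadic one.

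There is also a small technical gap: taking $W(t_k)=2^k$ is not enough. Proposition~\ref{4} with exponent $q$ needs $a_{k+1}\ge 2\theta^q a_k$, so the dyadic base must be $\Theta:=2\theta^q$, not $2$; otherwise the kernel-telescoping collapses for $\theta>1$. Relatedly, your description of the second tier as ``tracking the growth of $z\mapsto U^q(t_k,z)$'' is slightly off: the $n$-blocks in Theorem~\ref{9} are governed by geometric growth of the \emph{coupled} quantity $\sum_k\Theta^k U^q(\Delta_k)$, mixing $w$-mass and kernel increments. That coupling is precisely what produces the estimates \eqref{13}--\eqref{16} on which the hard direction $A_2\lesssim D_1+D_2$ ultimately rests.
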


\pagebreak
The variant of the previous theorem for $p=1$ reads as follows.

\begin{thm}\label{144}
  Let $0<q<1=p$ and $0<\theta<\infty$. Let $v$, $w$ be weights. Let $U$ be a~$\theta$-regular kernel. Then the following assertions are equivalent:
    \begin{enumerate}
      \item[\rm(i)]
        There exists a~constant $C\in(0,\infty)$ such that the inequality \eqref{8} holds for all functions $f\in\MM$. 
      \item[\rm(ii)]
        Both the conditions
          \[
            D_3 := \sup_{\substack{\{t_k\}_{k\in\I} \\ \textnormal{covering}\\ \textnormal{sequence}}} \sum_{k\in\I_0} \lt \int_{t_{(\!k\!-\!1\!)}}^{t_k} w(t)\dt \rt^{1-q'} \!\!\! \esssup_{x\in(\tk,t_{(\!k\!+\!1\!)})} U^{-q'}(\tk,x)\, v^{q'}\!(x)\dx <\infty
          \]   
        and
          \[
            D_4 := \sup_{\substack{\{t_k\}_{k\in\I} \\ \textnormal{covering}\\ \textnormal{sequence}}} \sum_{k\in\I_0} \lt \int_{t_{(\!k\!-\!1\!)}}^{t_k} w(t) \Uq(t,\tk)\dt \rt^{1-q'} \!\!\! \esssup_{x\in(\tk,t_{(\!k\!+\!1\!)})} v^{q'}\!(x)\dx <\infty
          \]
        are satisfied.
      \item[\rm(iii)]
        Both the conditions   
          \[
            A_3 := \int_0^\infty \left( \int_0^t w(x)\dx \right)^{-q'} w(t)\  \esssup_{z\in(t,\infty)} U^{-q'}(t,z)\, v^{q'}\!(z)\, \dt <\infty
          \]
        and
          \[
            A_4 := \int_0^\infty \left( \int_0^t w(x) \Uq(x,t) \dx \right)^{-q'} w(t)\  \esssup_{z\in(t,\infty)} \Uq(t,z)\, v^{q'}\!(z) \, \dt<\infty
          \] 
        are satisfied.    
    \end{enumerate}
  Moreover, if $C$ is the least constant such that \eqref{8} holds for all functions $f\in\MM$, then 
    \[
      C^{-q'} \approx D_3 + D_4 \approx A_3 + A_4.
    \]
\end{thm}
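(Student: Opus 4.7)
The overall strategy is to mirror the proof of Theorem \ref{7}, systematically replacing the $L^{p'}(v^{1-p'})$-dualization from Proposition \ref{107} by the essential-supremum variant noted in the paragraph immediately after that proposition. Throughout, we exploit the identities $1-q' = 1/(1-q)$ and $-q' = q/(1-q) > 0$ (valid because $q < 1$) to match the exponents in the $D_i$ and $A_i$ expressions. The three implications (i) $\Rightarrow$ (ii), (ii) $\Rightarrow$ (i), and (ii) $\Leftrightarrow$ (iii) will be proved separately, all by dyadic discretization.

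For (i) $\Rightarrow$ (ii), I would fix an arbitrary covering sequence $\{t_k\}_{k\in\I}$, select for each $k \in \I_0$ a point $x_k \in (t_k, t_{k+1})$ at which $U(t_k,x_k) v^{-1}(x_k)$ is close to $\esssup_{x \in (t_k,t_{k+1})} U(t_k,x) v^{-1}(x)$, and test \eqref{8} with $f = \sum_k \alpha_k g_k$, where $\alpha_k \geq 0$ and $g_k$ is a bump of unit $L^1(v)$-norm concentrated near $x_k$ (the $p = 1$ variant of Proposition \ref{107}). Monotonicity of $U$ in its first argument gives $H^*f(t) \gtrsim \alpha_k U(t_k,x_k) v^{-1}(x_k)$ on $(t_{k-1},t_k)$, producing the discrete inequality $(\sum_k \alpha_k^q b_k)^{1/q} \lesssim C \sum_k \alpha_k$ with $b_k = U^q(t_k,x_k) v^{-q}(x_k) \int_{t_{k-1}}^{t_k} w$. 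Optimizing over $(\alpha_k)$ (the maximizer is $\alpha_k \propto b_k^{1/(1-q)}$) yields $\sum_k b_k^{1/(1-q)} \lesssim C^{-q'}$, which after taking the $\esssup$ in $x_k$ and rearranging exponents is exactly $D_3 \lesssim C^{-q'}$. The bound $D_4 \lesssim C^{-q'}$ is obtained analogously, with Proposition \ref{4} used to peel off a factor $U^q(t,t_k)$ on $(t_{k-1},t_k)$ via the $\theta$-regularity of $U$. The converse (ii) $\Rightarrow$ (i) is the dual discretization: we break up $\int_0^\infty (H^*f)^q w$ over a canonical dyadic sequence $\{\tau_j\}$ (with $\int_0^{\tau_j} w \approx \sigma^j$ for a suitably large $\sigma > 1$), split $H^*f(t)$ into a "near" contribution from $(t,\tau_{j+1})$ and "far" contributions from $(\tau_l,\tau_{l+1})$ for $l > j$, apply the $p = 1$ Hölder variant to each piece, and collapse the resulting double sum via Propositions \ref{3}, \ref{4}, and \ref{89}.

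The equivalence (ii) $\Leftrightarrow$ (iii) is then purely structural. For $A_3 + A_4 \lesssim D_3 + D_4$, I would evaluate the $D$-suprema at the same canonical dyadic sequence $\{\tau_j\}$; the discrete sum reproduces $A_3 + A_4$ by geometric summation of the $(\int w)^{1-q'}$ factors together with Proposition \ref{59}, used to promote each local $\esssup$ over $(\tau_j,\tau_{j+1})$ to a global $\esssup$ over $(\tau_j,\infty)$, the residual terms being absorbed into the complementary $D$-term through $\theta$-regularity. For the reverse bound $D_3 + D_4 \lesssim A_3 + A_4$, each summand in a generic covering sequence is dominated by an integral over $(t_{k-1},t_k)$ of a nonincreasing quantity, and Proposition \ref{3} handles the telescoping $w$-integrals. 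The hardest step will be the gluing in the first direction: promoting many local essential suprema on short dyadic strips to a single essential supremum on a half-line while keeping the $\theta$-regularity corrections absorbable. This is what dictates the precise choice of $\sigma$ (large enough for the geometric error series to converge) and forces the presence of \emph{both} conditions $A_3$ and $A_4$, since the error generated when consolidating the $\esssup$ in $A_3$ is exactly what is controlled by $A_4$, and vice versa.
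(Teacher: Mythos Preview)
Your proposal is correct and follows essentially the same route as the paper: the proof of Theorem~\ref{144} in the paper is obtained by repeating the proof of Theorem~\ref{7} verbatim with the $L^{p'}(v^{1-p'})$ duality replaced by the $p=1$ essential-supremum variant of Proposition~\ref{107}, together with the identity $\sup_{z\ge t} U^q(t,z)\,\esssup_{s>z} v^{q'}(s)=\esssup_{s>t} U^q(t,s)\,v^{q'}(s)$ that reconciles the form of $A_4$ with the $p\to 1$ limit of $A_2$. A couple of your tool citations are slightly off (e.g.\ the lower bound for $D_4$ needs only the monotonicity of $U$, not Proposition~\ref{4}, and the estimate $D_3+D_4\lesssim A_3+A_4$ in the paper proceeds by the elementary identity $(\int_{t_{k-1}}^{t_k} w)^{r/q}\approx\int_{t_{k-1}}^{t_k}(\int_{t_{k-1}}^{x} w)^{r/p} w(x)\,dx$ rather than Proposition~\ref{3}), but these do not affect the argument.
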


By performing a~simple change of variables $t \to \frac1t$, one gets the two corollaries below. They are formulated without the discrete conditions, those corresponding to Corollary \ref{7*} were presented in Section 1. An~interested reader may also derive all the discrete conditions easily from their respective counterparts in Theorems \ref{7} and \ref{144}.

\begin{cor}\label{7*}
  Let $0<q<1<p<\infty$, $r:=\frac{pq}{p-q}$ and $0<\theta<\infty$. Let $v$, $w$ be weights. Let $U$ be a~$\theta$-regular kernel. Then the following assertions are equivalent:
    \begin{enumerate}
      \item[\rm(i)]
        There exists a~constant $C\in(0,\infty)$ such that the inequality
          \begin{equation}\label{8*}  
            \left( \int_0^\infty \lt \int_0^t f(x)U(x,t)\dx \rt^q w(t) \dt \right)^\jq \le C \lt \int_0^\infty f^p(t)v(t)\dt \rt^\jp
          \end{equation}
        holds for all functions $f\in\MM$. 
      \item[\rm(ii)]
        Both the conditions   
          \[
            A^*_1 := \int_0^\infty \left( \int_t^\infty w(x)\dx \right)^\rp w(t) \left( \int_0^t U^{p'}(z,t)\vp(z)\dz \right)^\rpp \dt <\infty
          \]
        and
          \[
            A^*_2 := \int_0^\infty \left( \int_t^\infty w(x) \Uq(t,x) \dx \right)^\rp w(t) \sup_{z\in(0,t]} \Uq(z,t)\left( \int_0^z \vp(s)\ds \right)^\rpp \dt<\infty
          \] 
        are satisfied.    
    \end{enumerate}
  Moreover, if $C$ is the least constant such that \eqref{8*} holds for all functions $f\in\MM$, then 
    \[
      C^r \approx A^*_1 + A^*_2.
    \]
\end{cor}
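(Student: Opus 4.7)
The plan is to deduce Corollary~\ref{7*} directly from Theorem~\ref{7} via the involution $t\mapsto 1/t$ on $(0,\infty)$. For a given $\theta$-regular kernel $U$ and weights $v,w$, I would introduce the reflected data
\[
\tilde U(s,y):=U(1/y,1/s),\ 0\le s<y\le\infty,\qquad \tilde v(s):=s^{2p-2} v(1/s),\qquad \tilde w(s):=s^{-2} w(1/s),
\]
and to each $f\in\MM$ associate the companion function $g(s):=s^{-2}f(1/s)$. Since $r\mapsto 1/r$ reverses order on $(0,\infty)$, a routine check confirms that $\tilde U$ satisfies the three defining axioms (i)--(iii) of a $\theta$-regular kernel, that $\tilde v,\tilde w$ are weights, and that $f\leftrightarrow g$ is a bijection of $\MM$ onto itself.

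The main computation is then to substitute $t=1/s$ in the outer integrals and $x=1/y$ in the inner integral of~\eqref{8*}. The inner integral transforms according to
\[
\int_0^t f(x)U(x,t)\dx=\int_s^\infty g(y)\,\tilde U(s,y)\dy,
\]
so the left-hand side of~\eqref{8*} becomes the $L^q(\tilde w)$-norm of $\tilde H g$, where $\tilde H$ is the operator of~\eqref{700} corresponding to the kernel $\tilde U$. Using the algebraic identity $(2p-2)(1-p')=-2$ to balance the Jacobians on the right-hand side, the inequality~\eqref{8*} for the data $(f,v,w,U)$ becomes exactly the inequality~\eqref{8} for the data $(g,\tilde v,\tilde w,\tilde U)$ with the same constant $C$. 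Invoking Theorem~\ref{7}, I then obtain that~\eqref{8*} holds for all $f\in\MM$ if and only if the quantities $A_1,A_2$ computed with $(\tilde v,\tilde w,\tilde U)$ are finite, and with the equivalence $C^r\approx A_1+A_2$.

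The final step is a second application of the same substitutions $t=1/s$ and $z=1/y$ inside $A_1$ and $A_2$. The factor $s^{-2}$ in $\tilde w$ cancels the outer Jacobian $ds/s^2$, while the factor $s^{-2}$ appearing in $\tilde v^{1-p'}$ (by the same identity $(2p-2)(1-p')=-2$) cancels the inner Jacobian; after these cancellations the two integrals coincide respectively with $A_1^*$ and $A_2^*$. The only non-bookkeeping point I anticipate is the verification that axiom~(iii) transfers to $\tilde U$, i.e.\ that $\lim_{z\to\infty}U(1/y,z)>0$ for every $y>0$; this follows by combining~(ii) and~(iii) for the original $U$, and in any genuinely degenerate configuration both sides of~\eqref{8*} vanish identically so there is nothing to prove.
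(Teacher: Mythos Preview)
Your approach is exactly what the paper does: Corollary~\ref{7*} is obtained from Theorem~\ref{7} ``by performing a~simple change of variables $t\to\frac1t$'', and you have carried out precisely this substitution with the correct Jacobian bookkeeping on both the inequality and the conditions $A_1,A_2$. The only caveat is that the reflected data $\tilde v,\tilde w$ need not literally satisfy the paper's one-sided definition of a weight (which controls $\int_0^t$, not $\int_t^\infty$), and axiom~(iii) for $\tilde U$ is not automatic---your argument via (ii)+(iii) does not quite suffice, e.g.\ for $U(x,y)=(1-x)^+$; the paper glosses over this too and notes, in its concluding remarks, that the clean fix is simply that the entire proof of Theorem~\ref{7} goes through in mirrored form for $H$ in place of $H^*$.
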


\begin{cor}\label{144*}
  Let $0<q<1=p$ and $0<\theta<\infty$. Let $v$, $w$ be weights. Let $U$ be a~$\theta$-regular kernel. Then the following assertions are equivalent:
    \begin{enumerate}
      \item[\rm(i)]
        There exists a~constant $C\in(0,\infty)$ such that the inequality \eqref{8*} holds for all functions $f\in\MM$. 
      \item[\rm(ii)]
        Both the conditions   
          \[
            A^*_3 := \int_0^\infty \left( \int_t^\infty w(x) \dx \right)^{-q'} w(t)\  \esssup_{z\in(0,t)} U^{-q'}(z,t)\, v^{q'}\!(z)\, \dt <\infty
          \]
        and
          \[
            A^*_4 := \int_0^\infty \left( \int_t^\infty w(x) \Uq(t,x) \dx \right)^{-q'} w(t)\  \esssup_{z\in(0,t)} \Uq(z,t)\, v^{q'}\!(z) \, \dt<\infty
          \] 
        are satisfied.    
    \end{enumerate}
  Moreover, if $C$ is the least constant such that \eqref{8*} holds for all functions $f\in\MM$, then 
    \[
      C^{-q'} \approx A^*_3 + A^*_4.
    \]
\end{cor}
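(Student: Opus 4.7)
The plan is to derive Corollary \ref{144*} from Theorem \ref{144} by the change of variables $t \mapsto 1/t$. Introduce the transformed objects $\hat{f}(y) := f(1/y)/y^2$, $\hat{v}(y) := v(1/y)$, $\hat{w}(s) := w(1/s)/s^2$ and the transformed kernel $\hat{U}(s,y) := U(1/y, 1/s)$. Substituting $x = 1/y$ in the inner integral of \eqref{8*} yields
\[
  \int_0^t f(x) U(x,t)\dx = \int_{1/t}^\infty \hat{f}(y)\, U(1/y,t)\dy,
\]
and then $t = 1/s$ in the outer integral converts \eqref{8*} into the inequality \eqref{8} of Theorem \ref{144} applied to $\hat{f}$ with weights $\hat{v}$, $\hat{w}$ and kernel $\hat{U}$; with $p = 1$, the right-hand side $\int_0^\infty f(t) v(t)\dt$ becomes $\int_0^\infty \hat{f}(s)\hat{v}(s)\ds$ after accounting for the Jacobian factor $1/s^2$.

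Next, I would verify that $\hat{U}$ is a $\theta$-regular kernel. The monotonicity conditions (i) follow immediately from those of $U$, since $t \mapsto 1/t$ reverses order; condition (ii) for $\hat{U}$ at $0 \le s < y < z$ is exactly the $\theta$-regularity of $U$ at the points $1/z < 1/y < 1/s$. Condition (iii), $\hat{U}(0,y) > 0$ for $y > 0$, amounts to $\lim_{s \to 0^+} U(1/y,1/s) > 0$, which follows from the positivity of $U(1/y,\cdot)$ for sufficiently large arguments (a consequence of property (ii) together with $U(0,\cdot) > 0$ and the monotonicity of $U$ in the second variable).

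Having reduced the problem, I would apply Theorem \ref{144} to the hat-objects and translate the resulting conditions $A_3$, $A_4$ back through the substitutions $s = 1/t$ and $z = 1/y$. The relevant identities are $\int_0^s \hat{w}(x)\dx = \int_t^\infty w(u)\,du$ and $\hat{w}(s)\ds = w(t)\dt$ (after reversing the orientation of integration), together with $\hat{U}^q(s,z)\,\hat{v}^{q'}\!(z) = U^q(1/z,1/s)\,v^{q'}\!(1/z)$, so that the essential supremum over $z \in (s,\infty)$ becomes one over $y \in (0,t)$. Substituting these into $A_3$, $A_4$ of Theorem \ref{144} (formed with the hat-objects) yields exactly $A_3^*$ and $A_4^*$ of Corollary \ref{144*}, and the equivalence $C^{-q'} \approx A_3^* + A_4^*$ is inherited from the corresponding equivalence in Theorem \ref{144}.

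The main obstacle is purely bookkeeping: one must carefully track the Jacobian factor $1/s^2$ through each substitution, verify that the orientation reversals cancel at the right places, and confirm that the essential suprema and the integration ranges transform correctly under the reciprocal map. No new analytic input beyond Theorem \ref{144} is required.
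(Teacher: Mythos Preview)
Your approach matches the paper's exactly: Corollary~\ref{144*} is stated there as an immediate consequence of Theorem~\ref{144} via the substitution $t\mapsto 1/t$, with no further argument given. One minor caveat: your verification of property~(iii) for $\hat U$ does not quite go through as written (if $U(0,\cdot)$ is bounded, property~(ii) alone does not force $U(1/y,T)>0$ for large $T$), but the paper glosses over this point as well, and it only concerns degenerate kernels for which the corollary holds trivially.
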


The next part contains the proofs. The core components of the discretization method used in this article are summarized in Theorem \ref{9} below. It is presented separately for the purpose of possible future reference since this particular variant of discretization may be used even in other problems (cf.~\cite{K6}).

Throughout the text, parentheses are used in expressions that involve indices, producing symbols such as $t_{(\!k\!+\!1\!)},$ $t_{k_{(\!n\!+\!1\!)}}$, etc. The parentheses do not have a~special meaning, i.e.~$t_{(\!k\!+\!1\!)}$ simply means $t$ with the index $k+1$. They are used to make it easier to distinguish between objects as $t_{k_{(\!n\!+\!1\!)}}$ and $t_{(k_n+1)}$, which, in general, are different and both of them appear frequently in the formulas.

\begin{thm}\label{9}
  Let $0<q<\infty$ and $1\le\theta<\infty$. Define
    $$\Theta := 2 \theta^{q}.$$
  Let $U$ be a~$\theta$-regular kernel.
  Let $K\in\Z$ and $\mu\in\Z$ be such that $\mu\le K-2$. Define the index set
    \begin{equation}\label{23}
      \Zm := \{k\in\Z;\ \mu\le k \le K-1 \}.
    \end{equation}
   Let $w$ be a~weight such that $\int_0^\infty w = \Theta^K$. Let $\{t_k\}_{k=-\infty}^{K}\subset (0,\infty]$ be a~sequence of points such that
    \begin{equation}\label{17}
      \int_0^{t_k} w(x)\dx = \Tk
    \end{equation}
  for all $k\in\Z$ such that $k\le K$ and $t_K=\infty$. For all $k\in\Z$ such that $k\le K-1$, denote 
    \[
      \dk := [t_k, t_{(\!k\!+\!1\!)})
    \]
  and
    \[
      U(\dk) := U\left(t_k, t_{(\!k\!+\!1\!)}\right).
    \]
  Then there exist a~number $N\in\N$ and an~index set $\{k_n\}_{n=0}^N\subset \Zm$ with the following properties.
    \begin{itemize}
      \item[\rm(i)]
        It holds $k_0=\mu$ and $k_{(\!n\!+\!1\!)}=K$. Whenever $n\in\{0,\ldots,N\}$, then $k_n + 1 \le k_{(\!n\!+\!1\!)}$ and therefore also
          \begin{equation}\label{10}
            t_{(k_n + 1)} \le t_{k_{(\!n\!+\!1\!)}}.
          \end{equation}
        If we define
          \begin{equation}\label{20}
            \A := \{ n\in\N;\ n\le N,\ k_n + 1 < k_{(\!n\!+\!1\!)} \},
          \end{equation}
        then 
          \begin{equation}\label{11}
            \Zm = \{ k_{(\!n\!+\!1\!)}\!-1;\ n\in\N\cup\{0\},\ n\le N \} \cup \{ k;\ k\in\Z,\ n\in\A,\ k_n\le k \le k_{(\!n\!+\!1\!)}\!-2 \}.
          \end{equation}
      \item[\rm(ii)]
        For every $n\in\N$ such that $n\le N-1$ it holds
          \begin{equation}\label{12}
            \sum_{k=k_n}^{k_{(\!n\!+\!1\!)}\!-1} \Tk \Uk \ge \Theta \sum_{k=k_{(\!n\!-\!1\!)}}^{k_n-1} \Tk \Uk
          \end{equation}
        and
          \begin{equation}\label{13}
            \sum_{k=\mu}^{k_n-1} \Tk \Uk \le \frac{\Theta}{\Theta-1} \sum_{k=k_{(\!n\!-\!1\!)}}^{k_n-1} \Tk \Uk.
          \end{equation}   
      \item[\rm(iii)]
        For every $n\in\A$ it holds
          \begin{equation}\label{14}
            \sum_{k=k_n}^{k_{(\!n\!+\!1\!)}\!-2} \Tk \Uk < \Theta \sum_{k=k_{(\!n\!-\!1\!)}}^{k_n-1} \Tk \Uk.
          \end{equation}
      \item[\rm(iv)]
        For every $n\in\N$, $k\in\Zm$ and $t\in (0,\infty]$ such that $n\le N$, $k\le k_{(\!n\!+\!1\!)}\!-1$ and $t\in(t_k,t_{(\!k\!+\!1\!)}]$ it holds
          \begin{equation}\label{15}  
            \int_{t_\mu}^t w(x) \Uxt \dx \lesssim \sum_{j=k_{(\!n\!-\!1\!)}}^{k_n-1} \Tj \Uj + \Tk \Utkt.
          \end{equation} 
        If the same conditions hold and it is even satisfied that $k\le k_{(\!n\!+\!1\!)}\!-2$, then 
          \begin{equation}\label{57}  
            \int_{t_\mu}^t w(x) \Uxt \dx \lesssim \sum_{j=k_{(\!n\!-\!1\!)}}^{k_n-1} \Tj \Uj.
          \end{equation}   
      \item[\rm(v)]
        Define $k_{(-1)}:=\mu-1$. Then for every $n\in\N$ such that $n\le N$ it holds
          \begin{equation}\label{16}  
            \sum_{j=k_{(\!n\!-\!1\!)}}^{k_n-1} \Tj \Uj \lesssim \int_{t_{k_{(\!n\!-\!2\!)}}}^{t_{k_n}} w(t) U^q(t,t_{k_n}) \dt.
          \end{equation}    
    \end{itemize}
\end{thm}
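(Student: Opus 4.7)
Set $\sigma_k := \Tk\Uk$ and, for each $m$, $B_m := \sum_{j=k_m}^{k_{m+1}-1}\sigma_j$. The sequence $\{k_n\}$ I would build greedily: put $k_0:=\mu$ and $k_1:=\mu+1$, then inductively let $k_{n+1}$ be the smallest $k\in(k_n,K]$ for which $\sum_{j=k_n}^{k-1}\sigma_j\ge\Theta B_{n-1}$; if no such $k$ exists, set $k_{n+1}:=K$ and terminate with $N:=n$. The procedure stops because indices lie in the finite range $\Zm$. Items~(i) and \eqref{11} are immediate from the construction, \eqref{12} holds by design, and \eqref{14} follows from the minimality of $k_{n+1}$. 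Property \eqref{13} is a telescoping geometric estimate: iterating \eqref{12} yields $B_m\le\Theta^{m-n+1}B_{n-1}$ for $0\le m\le n-1$, hence $\sum_{k=\mu}^{k_n-1}\sigma_k=\sum_{m=0}^{n-1}B_m\le\frac{\Theta}{\Theta-1}B_{n-1}$.

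For \eqref{16}, monotonicity of $U$ implies $U^q(t,t_{k_n})\ge U^q(t_k,t_{k+1})=\Uk$ for $t\in[t_{k-1},t_k]$ and $k\le k_n-1$, while $\int_{t_{k-1}}^{t_k}w=\Tk(1-\Theta^{-1})$. Hence $\sigma_k\lesssim\int_{t_{k-1}}^{t_k}w(t)U^q(t,t_{k_n})\dt$, and summing over $k=k_{n-1},\ldots,k_n-1$ produces an integral over $[t_{k_{n-1}-1},t_{k_n-1}]\subseteq[t_{k_{n-2}},t_{k_n}]$, which is \eqref{16} (the convention $k_{-1}=\mu-1$ handles $n=1$).

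The technical core is (iv). Without loss of generality assume $k_n\le k\le k_{n+1}-1$; a smaller $k$ lies in an earlier block $n'<n$, and iterated \eqref{12} gives $B_{n'-1}\le B_{n-1}$. Split $\int_{t_\mu}^t=\int_{t_\mu}^{t_{k_n}}+\int_{t_{k_n}}^{t_k}+\int_{t_k}^t$. Monotonicity immediately yields $\int_{t_k}^tw(x)U^q(x,t)\dx\lesssim\Tk U^q(t_k,t)$. For the middle piece, $\theta$-regularity gives $U^q(x,t)\lesssim U^q(x,t_k)+U^q(t_k,t)$; a term-by-term bound followed by Proposition~\ref{4} with $a_m:=\Theta^m$ (for which $a_{m+1}=2\theta^q a_m$) converts $\int_{t_{k_n}}^{t_k}w(x)U^q(x,t_k)\dx$ into $\sum_{m=k_n}^{k-1}\Theta^m U^q(t_m,t_k)\lesssim\sum_{j=k_n}^{k-1}\sigma_j\lesssim B_{n-1}$ via \eqref{14}, while the remaining $U^q(t_k,t)\int_{t_{k_n}}^{t_k}w\le\Tk U^q(t_k,t)$ is the local boundary term. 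The leftmost piece is handled by the iterated splitting $U^q(x,t)\lesssim U^q(x,t_{k_n})+U^q(t_{k_n},t_k)+U^q(t_k,t)$: the first summand absorbs into $B_{n-1}$ via Proposition~\ref{4} and \eqref{13}, the second into $B_{n-1}$ via Proposition~\ref{4} and \eqref{14}, and the third into $\Tk U^q(t_k,t)$. This establishes \eqref{15}. When $k\le k_{n+1}-2$, the boundary term is further absorbed via $\Tk U^q(t_k,t)\le\sigma_k\le\sum_{j=k_n}^{k_{n+1}-2}\sigma_j<\Theta B_{n-1}$ by \eqref{14}, yielding \eqref{57}. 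The main obstacle is exactly this bookkeeping: each of several superimposed $\theta$-regularity splittings must be dispatched by the right combination of Proposition~\ref{4}, \eqref{13}, and \eqref{14}; the case $k=k_{n+1}-1$ is precisely why \eqref{57} cannot be strengthened to the full range $k\le k_{n+1}-1$.
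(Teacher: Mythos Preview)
Your proposal is correct and follows essentially the same approach as the paper: the greedy construction of $\{k_n\}$, the arguments for (i)--(iii) and (v), and the overall strategy for (iv) all match. The only notable difference is in (iv): the paper splits $\int_{t_\mu}^t$ once at $t_k$ (not at $t_{k_n}$), bounds $\sum_{j=\mu}^{k-1}\sigma_j$ directly via Proposition~\ref{4}, and then distinguishes $k\le k_n$ versus $k>k_n$ only at the end when invoking \eqref{13} and \eqref{14}; this avoids your WLOG reduction to $k_n\le k$ (which is really an implicit induction on $n$, with the base case $k=\mu$ needing separate treatment) and the three-term splitting $U^q(x,t)\lesssim U^q(x,t_{k_n})+U^q(t_{k_n},t_k)+U^q(t_k,t)$ (which is awkward when $k=k_n$).
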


\begin{proof}
  At first, observe that it is indeed possible to choose the sequence $\{t_k\}$ with the required properties because the weight $w$ is locally integrable. Since $w$ may take zero values, the sequence $\{t_k\}$ need not be unique. In that case, we choose one fixed $\{t_k\}$ satisfying the requirements. From \eqref{17} we deduce that
    \begin{equation}\label{18}
      \Tk = \int_0^{t_k}w(s)\ds = \frac1{\Theta-1} \int_\dk w(s)\ds = \frac{\Theta}{\Theta-1} \int_{\Delta_{(k-1)}}\!\!w(s)\ds
    \end{equation}
  for all $k\in\Z$ such that $k\le K-1$.

  We proceed with the construction of the index subset $\{k_n\}$. Define $k_0:=\mu$ and $k_1:=\mu+1$ and continue inductively as follows.\\
  ($\ast$) Let $k_0,\ldots,k_n$ be already defined. Then
    \begin{itemize}
      \item[(a)] 
        If $k_n=K$, define $N:=n-1$ and stop the procedure.
      \item[(b)]
        If $k_n<K$ and there exists an~index $j$ such that $k_n<j\le K$ and 
          \begin{equation}\label{19}
            \sum_{k=k_n}^{j-1} \Tk \Uk \ge \Theta \sum_{k=k_{(\!n\!-\!1\!)}}^{k_n-1} \Tk \Uk,
          \end{equation} 
        then define $k_{(\!n\!+\!1\!)}$ as the smallest index $j$ for which \eqref{19} holds. Then proceed again with step ($\ast$) with $n+1$ in place of $n$.
      \item[(c)]
        If $k_n<K$ and and \eqref{19} holds for no index $j$ such that $k_n<j\le K$, then define $N:=n$, $k_{(\!n\!+\!1\!)}:=K$ and stop the procedure.
    \end{itemize}
  In this manner, one obtains a~finite sequence of indices $\{k_0,\ldots,k_N\}\subseteq\Zm$ and the final index $k_{(\!n\!+\!1\!)}=K$.
  
  We will call each interval $\dk$ the \emph{$k$-th segment}, and each interval $[t_{k_n},t_{(k_n+1)})$ the \emph{$n$-th block}. If $n\in\N$ is such that $n\le N$, then the $n$-th block either consists of the single $k_n$-th segment, in which case it holds
    \[
      k_{(\!n\!+\!1\!)}=k_{n}+1,
    \]
  or the $n$-th segment contains more than one segment and then
    \[
      k_{(\!n\!+\!1\!)}>k_{n}+1,
    \]
  If the $n$-th block is of the second type, then $n\in\A$, according to the definition \eqref{20}. Hence, \eqref{11} is satisfied, even though the set $\A$ may be empty. The relation \eqref{11} in plain words says that each segment is either the last one (i.e., with the highest index $k$) in a~block, or it belongs to a~block consisting of more than one segment and the investigated segment is not the last one of those. We have now proved (i). 
  
  The property \eqref{12} follows directly from the construction. If $n\in\N$ is such that $n\le N$, then by iterating \eqref{12} one gets
    \[
      \sum_{k=\mu}^{k_n-1} \Tk \Uk = \sum_{i=0}^{n-1} \sum_{k=k_i}^{k_{(\!i\!+\!1\!)}-1} \!\! \Tk \Uk \le \sum_{i=0}^{n-1} \Theta^{i-n+1} \!\!\!\! \sum_{k=k_{(\!n\!-\!1\!)}}^{k_n-1} \!\! \Tk \Uk \le \frac{\Theta}{\Theta\!-\!1} \sum_{k=k_{(\!n\!-\!1\!)}}^{k_n-1} \!\! \Tk \Uk.
    \]  
  Hence, \eqref{13} holds and (ii) is then proved. 
  
  Property (iii) is again a~direct consequence of the way the blocks were constructed. We proceed with proving (iv). Let $n\in\N$, $k\in\Zm$ and $t\in (0,\infty]$ be such that $n\le N$, $k\le k_{(\!n\!+\!1\!)}\!-1$ and $t\in(t_k,t_{(\!k\!+\!1\!)}]$. Then the following sequence of inequalities is valid:
    \begin{align}
      \int_{t_\mu}^t w(x) \Uxt \dx & =        \int_{t_\mu}^{t_k} w(x) \Uxt \dx + \int_{t_k}^t w(x) \Uxt \dx \nonumber\\
                                   & \lesssim \int_{t_\mu}^{t_k} w(x) \Uxtk \dx + \int_{t_\mu}^{t_k} w(x)\dx\ \Utkt + \int_{t_k}^t w(x) \Uxt \dx \nonumber\\
                                   & \le      \sum_{j=\mu}^{k-1} \int_{\dj} w(x)\dx\ U^q(t_j,t_k) + \int_{t_\mu}^{t_{(\!k\!+\!1\!)}} w(x)\dx\ \Utkt \nonumber\\
                                   & \lesssim \sum_{j=\mu}^{k-1} \Tj U^q(t_j,t_k) + \Tk \Utkt \label{21}\\
                                   & \lesssim \sum_{j=\mu}^{k-1} \Tj \Uj + \Tk \Utkt. \label{22}
    \end{align}
  In here, step \eqref{21} follows by \eqref{18}, and step \eqref{22} by Proposition \ref{4}. If $k\le k_n$, then
    \[
      \sum_{j=\mu}^{k-1} \Tj \Uj \le \sum_{j=\mu}^{k_n-1} \Tj \Uj \lesssim \sum_{j=k_{(\!n\!-\!1\!)}}^{k_n-1} \Tj \Uj.
    \]
  The second inequality here follows by \eqref{13}. If $k>k_n$, then $n\in\A$, $k_n+1\le k \le k_{(\!n\!+\!1\!)}\!-1$ and it holds
    \[
      \sum_{j=\mu}^{k-1} \Tj \Uj \le \sum_{j=\mu}^{k_{(\!n\!+\!1\!)}\!-2} \Tj \Uj = \sum_{j=\mu}^{k_n-1} \Tj \Uj + \sum_{j=k_n}^{k_{(\!n\!+\!1\!)}\!-2} \Tj \Uj \lesssim \sum_{j=k_{(\!n\!-\!1\!)}}^{k_n-1} \Tj \Uj.
    \]
  The last inequality is granted by \eqref{13} and \eqref{14}. We have proved that
    \[     
      \sum_{j=\mu}^{k-1} \Tj \Uj \lesssim \sum_{j=k_{(\!n\!-\!1\!)}}^{k_n-1} \Tj \Uj.
    \]
  Applying this in the inequality obtained at \eqref{22}, we get the estimate \eqref{15}. If we now add the assumption $k\le k_{(\!n\!+\!1\!)}\!-2$, then \eqref{15} still holds and, in addition to that, we get
    \[
      \Tk \Utkt \le \Tk \Uk \le \sum_{j=\mu}^{k_{(\!n\!+\!1\!)}\!-2} \Tj \Uj \lesssim \sum_{j=k_{(\!n\!-\!1\!)}}^{k_n-1} \Tj \Uj.
    \]
  In here, the last inequality follows from \eqref{13} and \eqref{14}. Applying this result to \eqref{15}, we obtain \eqref{57} and (iv) is thus proved. 
  
  To prove (v), let $n\in\N$ be such that $n\le N$ and observe the following:
    \begin{align*}
      \sum_{j=k_{(\!n\!-\!1\!)}}^{k_n-1} \Tj \Uj & \lesssim \sum_{j=k_{(\!n\!-\!1\!)}}^{k_n-1}\ \int_{\Delta_{j-1}} w(t)\dt\ \Uj \le \sum_{j=k_{(\!n\!-\!1\!)}}^{k_n-1}\ \int_{\Delta_{j-1}} w(t) U^q(t,t_{k_n}) \dt \\
      & = \int_{t_{(\!k_{(\!n\!-\!1\!)}-1\!)}}^{t_{(k_n-1)}} w(t) U^q(t,t_{k_n}) \dt \le \int_{t_{ k_{(\!n\!-\!2\!)}}}^{t_{k_n}} w(t) U^q(t,t_{k_n}) \dt.
    \end{align*} 
  In the first step, \eqref{18} was used. In the last one, we used the inequality $t_{ k_{(\!n\!-\!2\!)}} \le t_{(\! k_{(\!n\!-\!1\!)}-1 \!)}$ which follows from \eqref{10}.   
\end{proof}

\begin{proof}[of Theorem \ref{7}]
  Without loss of generality, we may assume that $\theta\in[1,\infty)$. Indeed, if the kernel $U$ is $\theta$-regular with $\theta\in(0,1)$, then $U$ is obviously also $1$-regular. \\
  
  ``(ii)$\Rightarrow$(i)''. Assume that $D_1<\infty$ and $D_2<\infty$. Let us prove that \eqref{8} holds for all $f\in\MM$ with the least constant $C$ satisfying $C^r \lesssim D_1 + D_2$. 
  
  At first, let us assume that there exists $K\in\Z$ such that $\int_0^\infty w = 2^K$. Let $\mu\in\Z$ be such that $\mu\le K-2$ and define $\Zm$ by \eqref{23}. Let $\{t_k\}_{k=-\infty}^{K}\subset (0,\infty]$ be a~sequence of points such that $t_K=\infty$ and \eqref{18} holds for all $k\in\Z$ such that $k\le K$. Let $\{k_n\}_{n=0}^N\subset\Zm$ be the subsequence of indices granted by Theorem \ref{9}. Related notation from Theorem \ref{9} will be used in what follows as well. Suppose that $f\in \MM\cap L^p(v)$. Then
    \Bdef{51}\Bdef{52}\Bdef{53}
    \begin{align}
      \int_{\tm}^\infty \! \lt \int_t^\infty \!\!\! f(x)U(t,x)\dx \rt^q w(t) \dt       & =         \sum_{k\in\Zm} \int_{\dk} \lt \int_t^\infty f(x)U(t,x)\dx \rt^q w(t) \dt \nonumber\\
      & \lesssim  \sum_{k\in\Zm} \Tk \lt \int_{\tk}^\infty f(x)U(\tk,x)\dx \rt^q \label{1080}\\
      & \lesssim  \sum_{n=0}^N \sum_{k=k_n}^{k_{(\!n\!+\!1\!)}\!-1} \Tk \lt \int_{\tk}^{t_{k_{(\!n\!+\!1\!)}}} f(x)U(\tk,x)\dx \rt^q \nonumber\\
      & \quad +   \sum_{n=0}^{N-1} \sum_{k=k_n}^{k_{(\!n\!+\!1\!)}\!-1} \Tk \lt \int_{t_{k_{(\!n\!+\!1\!)}}}^\infty f(x)U(\tk,x)\dx \rt^q \nonumber\\
      & \lesssim  \sum_{n=0}^N \sum_{k=k_n}^{k_{(\!n\!+\!1\!)}\!-1} \Tk \lt \int_{\tk}^{t_{k_{(\!n\!+\!1\!)}}} f(x)U(\tk,x)\dx \rt^q \nonumber\\
      & \quad +   \sum_{n=0}^{N-1} \sum_{k=k_n}^{k_{(\!n\!+\!1\!)}\!-1} \!\! \Tk \Uq(\tk,t_{k_{(\!n\!+\!1\!)}})\lt \int_{t_{k_{(\!n\!+\!1\!)}}}^\infty \hspace{-8pt} f(x)\dx \rt^q \nonumber\\
      & \quad +   \sum_{n=0}^{N-1} \sum_{k=k_n}^{k_{(\!n\!+\!1\!)}\!-1} \Tk \lt \int_{t_{k_{(\!n\!+\!1\!)}}}^\infty \hspace{-8pt} f(x)U(t_{k_{(\!n\!+\!1\!)}},x)\dx \rt^q \nonumber\\
      & =:        \B{51} + \B{52} + \B{53}.\nonumber
    \end{align}
  Inequality \eqref{1080} follows from \eqref{18}. Furthermore, we have
    \Bdef{54}\Bdef{55}
    \begin{align}
      \B{51} & =        \sum_{n=0}^N \sum_{k=k_n}^{k_{(\!n\!+\!1\!)}\!-1} \Tk \lt \int_{\tk}^{t_{k_{(\!n\!+\!1\!)}}} f(x)U(\tk,x)\dx \rt^q \nonumber\\ 
             & \lesssim \sum_{n=0}^N \Theta^{k_{(\!n\!+\!1\!)}\!-1} \lt \int_{\Delta_{(\!k_{(\!n\!+\!1\!)}\!-1\!)}} \hspace{-15pt} f(x)U(t_{(\!k_{(\!n\!+\!1\!)}\!-1\!)},x)\dx \rt^q + \sum_{n\in\A} \sum_{k=k_n}^{k_{(\!n\!+\!1\!)}\!-2} \! \Tk \lt \int_{\tk}^{t_{k_{(\!n\!+\!1\!)}}} f(x)U(\tk,x)\dx \rt^q \nonumber\\            
             & \lesssim \sum_{n=0}^N \Theta^{k_{(\!n\!+\!1\!)}\!-1} \lt \int_{\Delta_{(\!k_{(\!n\!+\!1\!)}\!-1\!)}} \hspace{-15pt} f(x)U(t_{(\!k_{(\!n\!+\!1\!)}\!-1\!)},x)\dx \rt^q + \sum_{n\in\A} \sum_{k=k_n}^{k_{(\!n\!+\!1\!)}\!-2} \! \Tk \lt \int_{\Delta_{(\!k_{(\!n\!+\!1\!)}\!-1\!)}} \hspace{-10pt} f(x)U(\tk,x)\dx \rt^q \nonumber\\
             & \quad +  \sum_{n\in\A} \sum_{k=k_n}^{k_{(\!n\!+\!1\!)}\!-2} \Tk \lt \int_{\tk}^{t_{(\!k_{(\!n\!+\!1\!)}\!-1\!)}} \!\! f(x)U(\tk,x)\dx \rt^q \nonumber\\
             & \lesssim \sum_{n=0}^N \Theta^{k_{(\!n\!+\!1\!)}\!-1} \lt \int_{\Delta_{(\!k_{(\!n\!+\!1\!)}\!-1\!)}} \hspace{-18pt} f(x)U(t_{(\!k_{(\!n\!+\!1\!)}\!-1\!)},x)\dx \rt^q + \sum_{n\in\A} \! \sum_{k=k_n}^{k_{(\!n\!+\!1\!)}\!-2} \! \Tk \lt \int_{\Delta_{(\!k_{(\!n\!+\!1\!)}\!-1\!)}} \hspace{-18pt} f(x)U(t_{(\!k_{(\!n\!+\!1\!)}\!-1\!)},x)\dx \rt^q \nonumber\\
             & \quad +  \sum_{n\in\A} \! \sum_{k=k_n}^{k_{(\!n\!+\!1\!)}\!-2} \!\!\! \Tk \Uq(\tk,t_{(\!k_{(\!n\!+\!1\!)}\!-1\!)})\lt \int_{\Delta_{(\!k_{(\!n\!+\!1\!)}\!-1\!)}} \hspace{-20pt} f(x)\dx \rt^q + \sum_{n\in\A} \! \sum_{k=k_n}^{k_{(\!n\!+\!1\!)}\!-2} \!\!\! \Tk \lt \int_{\tk}^{t_{(\!k_{(\!n\!+\!1\!)}\!-1\!)}} \!\!\! f(x)U(\tk,x)\dx \rt^q \nonumber\\
             & \lesssim \sum_{n=0}^N \Theta^{k_{(\!n\!+\!1\!)}} \lt \int_{\Delta_{(\!k_{(\!n\!+\!1\!)}\!-1\!)}} \hspace{-20pt} f(x)U(t_{(\!k_{(\!n\!+\!1\!)}\!-1\!)},x)\dx \rt^q + \sum_{n\in\A} \! \sum_{k=k_n}^{k_{(\!n\!+\!1\!)}\!-2} \!\!\! \Tk \Uq(\tk,t_{(\!k_{(\!n\!+\!1\!)}\!-1\!)}) \lt \int_{\tk}^{t_{k_{(\!n\!+\!1\!)}}} \!\!\! f(x)\dx \rt^q \nonumber\\
             & =:       \B{54} + \B{55}.\nonumber
    \end{align}  
  For the role of the symbol $\A$, see \eqref{20}. In the next step, for formal reasons define $t_{(k_{(\!N+2)}-1)}:=\infty$. Then we get
    \begin{align}
      \B{54} & =        \sum_{n=0}^N \Theta^{k_{(\!n\!+\!1\!)}} \lt \int_{\Delta_{(\!k_{(\!n\!+\!1\!)}\!-1\!)}} \hspace{-12pt} f(x)U(t_{(\!k_{(\!n\!+\!1\!)}\!-1\!)},x)\dx \rt^q \nonumber\\
             & \le      \sum_{n=0}^N \Theta^{k_{(\!n\!+\!1\!)}} \lt \int_{\Delta_{(\!k_{(\!n\!+\!1\!)}\!-1\!)}} \hspace{-12pt} \Upp(t_{(\!k_{(\!n\!+\!1\!)}\!-1\!)},x) \vp\!(x)\dx \rt^{\frac{q}{p'}} \lt \int_{\Delta_{(\!k_{(\!n\!+\!1\!)}\!-1\!)}} \hspace{-15pt} f^p(x)v(x)\dx \rt^{\frac qp} \label{109}\\
             & \le      \lt \sum_{n=0}^N \Theta^{\rq k_{(\!n\!+\!1\!)}} \lt \int_{\Delta_{(\!k_{(\!n\!+\!1\!)}\!-1\!)}} \hspace{-17pt} \Upp\!(t_{(\!k_{(\!n\!+\!1\!)}\!-1\!)},x) \vp\!(x)\dx \rt^{\!\frac{r}{p'}} \rt^{\!\frac{q}r} \! \lt \sum_{n=0}^N \int_{\Delta_{(\!k_{(\!n\!+\!1\!)}\!-1\!)}} \hspace{-17pt} f^p(x)v(x)\dx \rt^{\frac qp} \label{110}\\
             & \le      \lt \sum_{n=0}^N \Theta^{\rq k_{(\!n\!+\!1\!)}} \lt \int_{\Delta_{(\!k_{(\!n\!+\!1\!)}\!-1\!)}} \hspace{-15pt} \Upp(t_{(\!k_{(\!n\!+\!1\!)}\!-1\!)},x) \vp(x)\dx \rt^{\frac{r}{p'}} \rt^{\frac{q}r} \|f\|_{L^p(v)}^q \nonumber\\
             & \lesssim \lt \sum_{n=0}^N \lt \int_{\Delta_{(\!k_{(\!n\!+\!1\!)}-2\!)}} \hspace{-10pt} w(x)\dx \rt^\rq \lt \int_{\Delta_{(\!k_{(\!n\!+\!1\!)}\!-1\!)}} \hspace{-15pt} \Upp(t_{(\!k_{(\!n\!+\!1\!)}\!-1\!)},x) \vp\!(x)\dx \rt^{\!\frac{r}{p'}} \rt^{\!\frac{q}r} \|f\|_{L^p(v)}^q \label{111}\\
             & \le      \lt \sum_{n=0}^N \lt \int_{t_{(\!k_n\!-\!1\!)}}^{t_{(\!k_{(\!n\!+\!1\!)}\!-1\!)}} \hspace{-5pt} w(x)\dx \rt^{\!\rq} \lt \int_{t_{(\!k_{(\!n\!+\!1\!)}\!-1\!)}}^{t_{(\!k_{(\!n\!+\!2\!)}-\!1\!)}}  \Upp\!\!(t_{(\!k_{(\!n\!+\!1\!)}\!-1\!)},x) \vp\!(x)\dx \rt^{\!\frac{r}{p'}} \rt^{\!\frac{q}r} \|f\|_{L^p(v)}^q \label{112}\\
             & \le      D_1^{\frac qr} \|f\|^q_{L^p(v)}.\nonumber
    \end{align}
  The H\"older inequality for functions was used in \eqref{109}, and its discrete version (see Proposition \ref{108}) was used in \eqref{110}. Step \eqref{111} follows from \eqref{18}. In \eqref{112} we used the inequalities $t_{(k_n\!-1)} \le t_{(k_{(\!n\!+\!1\!)}-2)}$ and $t_{k_{(\!n\!+\!1\!)}} \le t_{(k_{(\!n\!+\!2\!)}-1)}$ which hold for all $n\in\{0,\ldots,N\}$ and both follow from \eqref{10} or the additional formal definition in the case $n=N$. Step \eqref{112} ensures that the sequence $\{t_{(k_n-1)}\}_{n=0}^N$ can be extended into a~covering sequence (formally, $\{t_{(k_n-1)}\}_{n=0}^N$ itself is not a~covering sequence since $t_{(k_0\!-1)}=t_{(\mu\!-1\!)}>0$).     
  
  Regarding the term $\B{55}$, one has 
    \begin{align}
      \B{55} & =        \sum_{n\in\A} \sum_{k=k_n}^{k_{(\!n\!+\!1\!)}\!-2} \!\!\! \Tk \Uq(\tk,t_{(\!k_{(\!n\!+\!1\!)}\!-1\!)}) \lt \int_{\tk}^{t_{k_{(\!n\!+\!1\!)}}} \!\! f(x)\dx \rt^q \nonumber\\  
             & \le      \sum_{n\in\A} \sum_{k=k_n}^{k_{(\!n\!+\!1\!)}\!-2} \!\!\! \Tk \Uq(\tk,t_{(\!k_{(\!n\!+\!1\!)}\!-1\!)}) \lt \int_{t_{k_n}}^{t_{k_{(\!n\!+\!1\!)}}} \!\! f(x)\dx \rt^q \nonumber\\  
             & \lesssim \sum_{n\in\A} \sum_{k=k_n}^{k_{(\!n\!+\!1\!)}\!-2} \!\!\! \Tk \Uq(\dk) \lt \int_{t_{k_n}}^{t_{k_{(\!n\!+\!1\!)}}} \!\! f(x)\dx \rt^q \label{113}\\  
             & \le      \sum_{n\in\A} \sum_{k=k_n}^{k_{(\!n\!+\!1\!)}\!-2} \!\!\! \Tk \Uq(\dk) \lt \int_{t_{k_n}}^{t_{k_{(\!n\!+\!1\!)}}} \!\! \vp(x)\dx \rt^\frac{q}{p'} \lt \int_{t_{k_n}}^{t_{k_{(\!n\!+\!1\!)}}} \!\! f^p(x)v(x)\dx \rt^\frac{q}{p} \label{115}\\  
             & \le      \lt \sum_{n\in\A} \lt \sum_{k=k_n}^{k_{(\!n\!+\!1\!)}\!-2} \!\!\! \Tk \Uq(\dk) \rt^{\!\rq} \!\lt \int_{t_{k_n}}^{t_{k_{(\!n\!+\!1\!)}}} \!\! \vp\!(x)\dx \rt^{\!\frac{r}{p'}} \rt^{\!\frac qr} \lt \sum_{n\in\A} \int_{t_{k_n}}^{t_{k_{(\!n\!+\!1\!)}}} \!\! f^p(x)v(x)\dx \rt^\frac{q}{p} \label{116}\\  
             & \le      \lt \sum_{n\in\A} \lt \int_{t_{k_{(\!n\!-\!2\!)}}}^{t_{k_n}} w(t) U^q(t,t_{k_n}) \dt \rt^\rq \!\lt \int_{t_{k_n}}^{t_{k_{(\!n\!+\!1\!)}}} \!\! \vp\!(x)\dx \rt^{\!\frac{r}{p'}} \rt^{\!\frac qr} \|f\|_{L^p(v)}^q \label{117}\\  
             & \le      D_2^{\frac qr} \|f\|_{L^p(v)}^q. \nonumber       
    \end{align}
  Inequality \eqref{113} follows from Proposition \ref{4}. In steps \eqref{115} and \eqref{116} we used the appropriate versions of the H\"older inequality, cf.~Propositions \ref{107} and \ref{108}. Inequalities \eqref{14} and \eqref{16} give the estimate \eqref{117}. We proved
    \[
      \B{51} \lesssim \B{54} + \B{55} \lesssim (D_1 + D_2)^\frac qr \|f\|_{L^p(v)}^q.
    \]
  We continue with the term $\B{52}$.
    \begin{align}
      \B{52} & =        \sum_{n=0}^{N-1} \sum_{k=k_n}^{k_{(\!n\!+\!1\!)}\!-1} \!\! \Tk \Uq(\tk,t_{k_{(\!n\!+\!1\!)}})\lt \int_{t_{k_{(\!n\!+\!1\!)}}}^\infty \hspace{-8pt} f(x)\dx \rt^q \nonumber\\
             & \lesssim \sum_{n=0}^{N-1} \sum_{k=k_n}^{k_{(\!n\!+\!1\!)}\!-1} \!\! \Tk \Uq(\dk)\lt \int_{t_{k_{(\!n\!+\!1\!)}}}^\infty \hspace{-8pt} f(x)\dx \rt^q \label{118}\\
             & =        \sum_{n=0}^{N-1} \sum_{k=k_n}^{k_{(\!n\!+\!1\!)}\!-1} \!\! \Tk \Uq(\dk)\lt \sum_{i=n+1}^N \int_{t_{k_i}}^{t_{k_{(\!i\!+\!1\!)}}} \hspace{-2pt} f(x)\dx \rt^q \nonumber\\
             & \lesssim \sum_{n=0}^{N-1} \sum_{k=k_n}^{k_{(\!n\!+\!1\!)}\!-1} \!\! \Tk \Uq(\dk)\lt \int_{t_{k_{(\!n\!+\!1\!)}}}^{t_{k_{(\!n\!+\!2\!)}}} \hspace{-2pt} f(x)\dx \rt^q \label{119}\\
             & \le      \sum_{n=0}^{N-1} \sum_{k=k_n}^{k_{(\!n\!+\!1\!)}\!-1} \!\! \Tk \Uq(\dk)\lt \int_{t_{k_{(\!n\!+\!1\!)}}}^{t_{k_{(\!n\!+\!2\!)}}} \hspace{-2pt} \vp\!(x)\dx \rt^\frac{q}{p'} \lt \int_{t_{k_{(\!n\!+\!1\!)}}}^{t_{k_{(\!n\!+\!2\!)}}} \hspace{-2pt} f^p(x)v(x) \dx \rt^\frac{q}{p} \label{120}\\
             & \le      \lt \sum_{n=0}^{N-1} \lt \sum_{k=k_n}^{k_{(\!n\!+\!1\!)}\!-1} \!\! \Tk \Uq(\dk) \rt^{\!\rq} \! \lt \int_{t_{k_{(\!n\!+\!1\!)}}}^{t_{k_{(\!n\!+\!2\!)}}} \hspace{-2pt} \vp\!(x)\dx \rt^{\!\frac{r}{p'}} \rt^{\!\frac qr} \! \lt \sum_{n=0}^{N-1} \int_{t_{k_{(\!n\!+\!1\!)}}}^{t_{k_{(\!n\!+\!2\!)}}} \hspace{-2pt} f^p(x)v(x) \dx \rt^{\!\frac{q}{p}} \label{121}\\
             & \lesssim \lt \sum_{n=0}^{N-1} \lt \int_{t_{k_{(\!n\!-\!1\!)}}}^{t_{k_{(\!n\!+\!1\!)}}} w(t) U^q(t,t_{k_{(\!n\!+\!1\!)}}) \dt \rt^{\!\rq} \! \lt \int_{t_{k_{(\!n\!+\!1\!)}}}^{t_{k_{(\!n\!+\!2\!)}}} \hspace{-2pt} \vp\!(x)\dx \rt^{\!\frac{r}{p'}} \rt^{\!\frac qr} \|f\|_{L^p(v)}^q \label{122}\\
             & \le      D_2^{\frac qr} \|f\|_{L^p(v)}^q. \nonumber
    \end{align}         
  Step \eqref{118} follows from Proposition \ref{4}. Proposition \ref{3} supplied with \eqref{12} gives \eqref{119}. In \eqref{120} and \eqref{121} we used the H\"older inequality (see Propositions \ref{107} and \ref{108}). To get \eqref{122}, one uses \eqref{16}. We obtained
    \[
      \B{52} \lesssim D_2^\frac qr \|f\|_{L^p(v)}^q.            
    \]
  In what follows, without loss of generality we will assume that $N\ge 2$. If $N=1$, then the terms involving $\sum_{j=0}^{N-2}$ (or similar) are simply not present in the calculations below.
  
  The term $\B{53}$ is treated as follows.         
    \Bdef{56}\Bdef{57}         
    \begin{align}
      \B{53} & =        \sum_{n=0}^{N-1} \sum_{k=k_n}^{k_{(\!n\!+\!1\!)}\!-1} \Tk \lt \int_{t_{k_{(\!n\!+\!1\!)}}}^\infty \hspace{-8pt} f(x)U(t_{k_{(\!n\!+\!1\!)}},x)\dx \rt^q \nonumber\\
             & \lesssim \sum_{n=0}^{N-1} \Theta^{k_{(\!n\!+\!1\!)}} \lt \int_{t_{k_{(\!n\!+\!1\!)}}}^\infty \hspace{-8pt} f(x)U(t_{k_{(\!n\!+\!1\!)}},x)\dx \rt^q \nonumber\\
             & =        \sum_{n=0}^{N-1} \Theta^{k_{(\!n\!+\!1\!)}} \lt \sum_{i=n\!+\!1}^{N} \int_{t_{k_i}}^{t_{k_{(\!i\!+\!1\!)}}} \hspace{-2pt} f(x)U(t_{k_{(\!n\!+\!1\!)}},x)\dx \rt^q \nonumber\\
             & \lesssim \sum_{n=0}^{N-1} \Theta^{k_{(\!n\!+\!1\!)}} \lt \sum_{i=n\!+\!1}^{N} \int_{t_{k_i}}^{t_{k_{(\!i\!+\!1\!)}}} \hspace{-3pt} f(x)U(t_{k_i},x)\dx \rt^q \! + \sum_{n=0}^{N-2} \Theta^{k_{(\!n\!+\!1\!)}} \lt \sum_{i=n\!+\!2}^{N} U(t_{k_{(\!n\!+\!1\!)}},t_{k_i}) \! \int_{t_{k_i}}^{t_{k_{(\!i\!+\!1\!)}}} \hspace{-3pt} f(x)\dx \rt^q \nonumber\\
             & =:       \B{56} + \B{57}.\nonumber
    \end{align}
  Furthermore, it holds
    \begin{align}
      \B{56} & =        \sum_{n=0}^{N-1} \Theta^{k_{(\!n\!+\!1\!)}} \lt \sum_{i=n\!+\!1}^{N} \int_{t_{k_i}}^{t_{k_{(\!i\!+\!1\!)}}} \hspace{-3pt} f(x)U(t_{k_i},x)\dx \rt^q \nonumber\\
             & \lesssim \sum_{n=0}^{N-1} \Theta^{k_{(\!n\!+\!1\!)}} \lt \int_{t_{k_{(\!n\!+\!1\!)}}}^{t_{k_{(\!n\!+\!2\!)}}} \hspace{-3pt} f(x)U(t_{k_{(\!n\!+\!1\!)}},x)\dx \rt^q \label{123}\\
             & \le      \sum_{n=0}^{N-1} \Theta^{k_{(\!n\!+\!1\!)}} \lt \int_{t_{k_{(\!n\!+\!1\!)}}}^{t_{k_{(\!n\!+\!2\!)}}} \hspace{-3pt} U(t_{k_{(\!n\!+\!1\!)}},x)\vp\!(x)\dx \rt^{\frac q{p'}} \lt \int_{t_{k_{(\!n\!+\!1\!)}}}^{t_{k_{(\!n\!+\!2\!)}}} \hspace{-3pt} f^p(x)v(x) \dx \rt^\frac{q}{p} \label{124}\\
             & \le      \lt \sum_{n=0}^{N-1} \Theta^{\rq k_{(\!n\!+\!1\!)}} \lt \int_{t_{k_{(\!n\!+\!1\!)}}}^{t_{k_{(\!n\!+\!2\!)}}} \hspace{-3pt} U(t_{k_{(\!n\!+\!1\!)}},x) \vp\!(x) \dx \rt^{\frac r{p'}} \rt^{\frac qr} \lt \sum_{n=0}^{N-1} \int_{t_{k_{(\!n\!+\!1\!)}}}^{t_{k_{(\!n\!+\!2\!)}}} \hspace{-3pt} f^p(x)v(x) \dx \rt^\frac{q}{p} \label{125}\\
             & \lesssim \lt \sum_{n=0}^{N-1} \lt \int_{t_{(\!k_{(\!n\!+\!1\!)}\!-1\!)}}^{t_{k_{(\!n\!+\!1\!)}}} w(x) \dx \rt^\rq \lt \int_{t_{k_{(\!n\!+\!1\!)}}}^{t_{k_{(\!n\!+\!2\!)}}} \hspace{-3pt} U(t_{k_{(\!n\!+\!1\!)}},x) \vp\!(x) \dx \rt^{\frac r{p'}} \rt^{\frac qr} \|f\|_{L^p(v)}^q \label{126}\\
             & \le      \lt \sum_{n=0}^{N-1} \lt \int_{t_{k_n}}^{t_{k_{(\!n\!+\!1\!)}}} w(x) \dx \rt^\rq \lt \int_{t_{k_{(\!n\!+\!1\!)}}}^{t_{k_{(\!n\!+\!2\!)}}} \hspace{-3pt} U(t_{k_{(\!n\!+\!1\!)}},x) \vp\!(x) \dx \rt^{\frac r{p'}} \rt^{\frac qr} \|f\|_{L^p(v)}^q \label{127}\\
             & \le      D_1^{\frac qr} \|f\|_{L^p(v)}^q.\nonumber  
    \end{align}   
  Step \eqref{123} follows by Proposition \ref{3}. As usual, in \eqref{124} and \eqref{125} we used the H\"older inequality. The inequality \eqref{126} is granted by \eqref{18}, and \eqref{127} is a~consequence of \eqref{10}.
  
  Next, for the term $\B{57}$ we have
    \begin{align}
      \B{57} & =        \sum_{n=0}^{N-2} \Theta^{k_{(\!n\!+\!1\!)}} \lt \sum_{i=n\!+\!2}^{N} U(t_{k_{(\!n\!+\!1\!)}},t_{k_i}) \! \int_{t_{k_i}}^{t_{k_{(\!i\!+\!1\!)}}} \hspace{-3pt} f(x)\dx \rt^q \nonumber\\
             & \le      \sum_{n=0}^{N-2} \Theta^{k_{(\!n\!+\!1\!)}} \sum_{i=n\!+\!2}^{N} \Uq(t_{k_{(\!n\!+\!1\!)}},t_{k_i}) \lt \int_{t_{k_i}}^{t_{k_{(\!i\!+\!1\!)}}} \hspace{-3pt} f(x)\dx \rt^q \label{128}\\
             & \le      \sum_{i=2}^{N} \sum_{n=0}^{i-2} \Theta^{k_{(\!n\!+\!1\!)}} \Uq(t_{k_{(\!n\!+\!1\!)}},t_{k_i}) \lt \int_{t_{k_i}}^{t_{k_{(\!i\!+\!1\!)}}} \hspace{-3pt} f(x)\dx \rt^q \nonumber\\
             & \le      \sum_{i=2}^{N} \sum_{k=\mu}^{k_i-1} \Tk \Uq(\tk,t_{k_i}) \lt \int_{t_{k_i}}^{t_{k_{(\!i\!+\!1\!)}}} \hspace{-3pt} f(x)\dx \rt^q \nonumber\\
             & \lesssim \sum_{i=2}^{N} \sum_{k=\mu}^{k_i-1} \Tk \Uk \lt \int_{t_{k_i}}^{t_{k_{(\!i\!+\!1\!)}}} \hspace{-3pt} f(x)\dx \rt^q \label{1280}\\
             & \le      \sum_{i=2}^{N} \sum_{k=\mu}^{k_i-1} \Tk \Uk \lt \int_{t_{k_i}}^{t_{k_{(\!i\!+\!1\!)}}} \hspace{-3pt} \vp\!(x)\dx \rt^{\frac q{p'}} \lt \int_{t_{k_i}}^{t_{k_{(\!i\!+\!1\!)}}} \hspace{-3pt} f^p(x)v(x)\dx \rt^{\frac qp} \label{129}\\
             & \le      \lt \sum_{i=2}^{N} \lt \sum_{k=\mu}^{k_i-1} \Tk \Uk \rt^{\rq} \lt \int_{t_{k_i}}^{t_{k_{(\!i\!+\!1\!)}}} \hspace{-3pt} \vp\!(x)\dx \rt^{\frac r{p'}} \rt^{\frac qr} \lt \sum_{n=0}^{N-1} \int_{t_{k_i}}^{t_{k_{(\!i\!+\!1\!)}}} \hspace{-3pt} f^p(x)v(x)\dx \rt^{\frac qp} \label{130}\\
             & \lesssim \lt \sum_{i=2}^{N} \lt \int_{t_{k_{(\!i\!-\!2\!)}}}^{t_{k_i}} w(t) U^q(t,t_{k_i}) \dt \rt^{\!\rq} \lt \int_{t_{k_i}}^{t_{k_{(\!i\!+\!1\!)}}} \hspace{-3pt} \vp\!(x)\dx \rt^{\frac r{p'}} \rt^{\frac qr} \|f\|_{L^p(v)}^q \label{131}\\
             & \le      D_2^{\frac qr} \|f\|_{L^p(v)}^q.\nonumber
    \end{align}     
  Inequality \eqref{128} follows from concavity of the $q$-th power for $q<1$. In \eqref{1280} one uses Proposition \ref{4}. The H\"older inequality gives \eqref{129} and \eqref{130}. Estimate \eqref{131} follows from \eqref{13} and \eqref{16}. We proved    
    \[
      \B{53} \lesssim \B{56} + \B{57} \lesssim (D_1 + D_2)^\frac qr \|f\|_{L^p(v)}^q.
    \]                      
  Combined with the other estimates of $\B{51}$ and $\B{52}$, this yields 
    \[
      \int_{\tm}^\infty \! \lt \int_t^\infty \!\!\! f(x)U(t,x)\dx \rt^q w(t) \dt \lesssim (D_1 + D_2)^\frac qr \|f\|_{L^p(v)}^q.
    \]
  Observe that the constant related to the symbol ``$\lesssim$'' in here does not depend on the choice of $\mu$. The reader may nevertheless notice that the construction of the $n$-blocks in fact depends on $\mu$. However, the constants in the ``$\lesssim$''-estimates proved with help of that construction are indeed independent of $\mu$. Hence, we may perform the limit pass $\mu\to -\infty$. Since $\tm\to 0$ as $\mu\to -\infty$, the monotone convergence theorem (and taking the $q$-th root) yields     
    \[
      \lt \int_{0}^\infty \! \lt \int_t^\infty \!\!\! f(x)U(t,x)\dx \rt^q w(t) \dt \rt^\jq \lesssim (D_1 + D_2)^\frac 1r \|f\|_{L^p(v)}
    \]   
  for the fixed function $f\in \MM\cap L^p(v)$. Since the function $f$ was chosen arbitrarily and the constant represented in ``$\lesssim$'' does not depend on $f$, the inequality \eqref{8} holds with $C=(D_1 + D_2)^\frac 1r$ for all functions $f\in \MM$. Clearly, if $C$ is the least constant such that \eqref{8} holds for all $f\in\MM$, then
    \begin{equation}\label{132}
      C^r \lesssim D_1 + D_2.
    \end{equation}
  At this point, recall that so far we have assumed that $\int_0^\infty w(x)\dx = \Theta^K$ for a~$K\in\Z$. Let us now complete the proof of this part for a~general weight $w$.
  
  At first, if $\int_0^\infty w(x)\dx$ is finite but not equal to any integer power of $\Theta$, the result is simply obtained by multiplying $w$ by a~constant $c\in(1,2)$ such that $\int_0^\infty cw(x)\dx = \Theta^K$ for a~$K\in\Z$, and then using homogeneity of the expressions $\int_{0}^\infty \! \lt \int_t^\infty \!\!\! f(x)U(t,x)\dx \rt^q w(t) \dt$, $D^{\frac qr}_1$ and $D^{\frac qr}_2$ with respect to $w$. 
  
  Finally, let us suppose that $\int_0^\infty w(x)\dx=\infty$. Choose an~arbitrary function $f\in \MM\cap L^p(v)$. For each $m\in\N$ define $w_m:=w\chi_{[0,m]}$ and denote by $D_{1,m}$ the expression $D_1$ with $w$ replaced by $w_m$. Similarly we define $D_{2,m}$. Since the weight $w$ is locally integrable, for each $m\in\N$ it holds $\int_0^\infty w_m(x)\dx<\infty$. Hence, by  the previous part of the proof we get
    \[
      \lt \int_{0}^\infty \! \lt \int_t^\infty \!\!\! f(x)U(t,x)\dx \rt^q w_m(t) \dt \rt^\jq \lesssim (D_{1,m} + D_{2,m})^\frac 1r \|f\|_{L^p(v)}.
    \]
  Obviously, for all $m\in\N$ it holds $w_m \le w$ pointwise, hence $D_{1,m} \le D_1$ and $D_{2,m} \le D_2$. Thus, we get
    \[
      \lt \int_{0}^\infty \! \lt \int_t^\infty \!\!\! f(x)U(t,x)\dx \rt^q w_m(t) \dt \rt^\jq \lesssim (D_1 + D_2)^\frac 1r \|f\|_{L^p(v)}.
    \]
  The constant in ``$\lesssim$'' does not depend on $m$ or $f$ and the latter was arbitrarily chosen. Since $w_m\uparrow w$ pointwise as $m\to\infty$, the monotone convergence theorem (for $m\to\infty$) yields that \eqref{8} holds for all functions $f\in \MM$ and the best constant $C$ in \eqref{8} satisfies \eqref{132}. The proof of this part is now complete.\\ 
  
  ``(i)$\Rightarrow$(ii)''. Suppose that \eqref{8} holds for all $f\in \MM$ and $C\in(0,\infty)$ is the least constant such that this is true. We need to show that $D_1 + D_2 \lesssim C^r$.
  
  Let $\{t_k\}_{k\in\I}$ be a~covering sequence indexed by a~set $\I=\{k_{\min},\ldots,k_{\max}\}\subset\Z$. At first, let us show that
    \begin{equation}\label{133}
      \int_{\tk}^{t_{(\!k\!+\!1\!)}} \Upp(\tk,x)\vp(x)\dx < \infty \quad \textnormal{for\ all\ }k\in\I_0.
    \end{equation}
  Suppose, for a~contradiction, that $k\in\I_0$ and $\int_{\tk}^{t_{(\!k\!+\!1\!)}} \Upp(\tk,x)\vp(x)\dt = \infty$. Then, by Proposition \ref{107}, for every $M\in\N$ there exists a~function $g_M$ supported in $[{\tk},{t_{(\!k\!+\!1\!)}}]$ and such that $\int_{\tk}^{t_{(\!k\!+\!1\!)}} g_M^p(t)v(t)\dt = 1$ and $\int_{\tk}^{t_{(\!k\!+\!1\!)}} g_M(x)U(\tk,x)\dx > M$. Since $\tk>0$, by definition of a~weight it holds $\int_0^{\tk} w(t)\dt >0$. Thus, for every $M\in\N$ one gets
    \begin{align*}
      \lt \int_0^\infty \lt \int_t^\infty g_M(x)U(t,x)\dx \rt^q w(t)\dt \rt^\jq & \ge \lt \int_0^{\tk} w(t)\dt \rt^\jq \int_{\tk}^{t_{(\!k\!+\!1\!)}} g_M(x)U(\tk,x)\dx \\
                                                                                & >   M \lt \int_0^{\tk} w(t)\dt \rt^\jq \\
                                                                                & =   M \lt \int_0^{\tk} w(t)\dt \rt^\jq \|g_M\|_{L^p(v)},
    \end{align*}
  which contradicts \eqref{8}. Hence, \eqref{133} must be satisfied. Since $\{t_k\}$ was chosen arbitrarily, \eqref{133} together with local integrability of $w$ is in fact sufficient to prove that $D_1<\infty$. However, we aim to prove a~stronger assertion, namely that $D_1\lesssim C^r$. To do so, we proceed as follows.
  
  Having verified \eqref{133}, for each $k\in\I_0$ we may use Proposition \ref{107} to find a~measurable function $g_k$ supported in $[\tk, t_{(\!k\!+\!1\!)}]$ and such that $\|g_k\|_{L^p(v)}=1$ as well as
    \begin{equation}\label{134}
      \lt \int_{\tk}^{t_{(\!k\!+\!1\!)}} \Upp(\tk,x)\vp(x)\dx \rt^{\frac 1{p'}} \le 2 \int_{\tk}^{t_{(\!k\!+\!1\!)}} g_k(x) U(t_k,x)\dx.
    \end{equation}
  Furthermore, it holds
    \[
      \sum_{k\in\I_0} \lt \int_{t_{(\!k\!-\!1\!)}}^{\tk} w(t)\dt \rt^\rq \lt \int_{\tk}^{t_{(\!k\!+\!1\!)}} \Upp(\tk,x)\vp(x)\dx \rt^{\rpp} <\infty
    \] 
  since $w$ is locally integrable, \eqref{133} holds and $\I_0$ consists of a~finite number of indices. Hence, by Proposition \ref{108} we can find a~nonnegative sequence $\{c_k\}_{k\in\I_0}$ such that $\sum_{k\in\I_0} c_k^p = 1$ and
    \begin{multline}
      \lt \sum_{k\in\I_0} \lt \int_{t_{(\!k\!-\!1\!)}}^{\tk} w(t)\dt \rt^\rq \lt \int_{\tk}^{t_{(\!k\!+\!1\!)}} \Upp(\tk,x)\vp(x)\dx \rt^{\rpp} \rt^{\jr} \\
        \le 2 \lt \sum_{k\in\I_0} c_k^q \int_{t_{(\!k\!-\!1\!)}}^{\tk} w(t)\dt \lt \int_{\tk}^{t_{(\!k\!+\!1\!)}} \Upp(\tk,x)\vp(x)\dx \rt^{\frac{q}{p'}} \rt^\jq. \label{135}   
    \end{multline}
  Define a~function $g:=\sum_{k\in\I_0} c_k g_k$ and recall that each $g_k$ is supported in $[\tk, t_{(\!k\!+\!1\!)}]$. Hence,
    \begin{equation}\label{136}
      \|g\|_{L^p(v)} = \lt \sum_{k\in\I_0} c_k^p \|g_k\|_{L^p(v)}^p \rt^\jp = \lt \sum_{k\in\I_0} c_k^p \rt^\jp = 1.
    \end{equation}
  Finally, we get the following estimate.
    \begin{align}
      &           \sum_{k\in\I_0} \lt \int_{t_{(\!k\!-\!1\!)}}^{\tk} w(t)\dt \rt^\rq \lt \int_{\tk}^{t_{(\!k\!+\!1\!)}} \Upp(\tk,x)\vp(x)\dx \rt^{\rpp} \nonumber\\
      & \lesssim  \lt \sum_{k\in\I_0} c_k^q \int_{t_{(\!k\!-\!1\!)}}^{\tk} w(t)\dt \lt \int_{\tk}^{t_{(\!k\!+\!1\!)}} \Upp(\tk,x)\vp(x)\dx \rt^{\frac{q}{p'}} \rt^\rq \label{137}\\
      & \lesssim  \lt \sum_{k\in\I_0} c_k^q \int_{t_{(\!k\!-\!1\!)}}^{\tk} w(t)\dt \lt \int_{\tk}^{t_{(\!k\!+\!1\!)}} U(\tk,x) g_k(x)\dx \rt^q \rt^\rq \label{138}\\
      & =         \lt \sum_{k\in\I_0} \int_{t_{(\!k\!-\!1\!)}}^{\tk} w(t)\dt \lt \int_{\tk}^{t_{(\!k\!+\!1\!)}} U(\tk,x) g(x)\dx \rt^q \rt^\rq \nonumber\\
      & \le       \lt \sum_{k\in\I_0} \int_{t_{(\!k\!-\!1\!)}}^{\tk} w(t) \lt \int_{t}^{t_{(\!k\!+\!1\!)}} U(t,x) g(x)\dx \rt^q \dt \rt^\rq \nonumber\\
      & \le       \lt \int_0^\infty w(t) \lt \int_{t}^{\infty} U(t,x) g(x)\dx \rt^q \dt \rt^\rq \nonumber\\
      & \le       C^r \|g\|_{L^p(v)}^r \label{139}\\
      & =         C^r. \label{140}
    \end{align}
  In steps \eqref{137}, \eqref{138}, \eqref{139} and \eqref{140} we used \eqref{135}, \eqref{134}, \eqref{8} and \eqref{136}, respectively. Since the covering sequence $\{t_k\}_{k\in\I}$ was chosen arbitrarily, by taking supremum over all covering sequences we obtain
    \[
      D_1 \lesssim C^r.
    \]
  In what follows, we are going to prove a~similar estimate for $D_2$. Again, let $\{t_k\}_{k\in\I}$ be a~covering sequence indexed by a~set $\I=\{k_{\min},\ldots,k_{\max}\}\subset\Z$. Then it holds
    \begin{equation}\label{142}
      \int_{t_{(\!k\!-\!1\!)}}^{\tk} w(t) \Uq(t,\tk)\dt <\infty \quad \textnormal{for\ all\ }k\in\I_0,
    \end{equation}
  and 
    \begin{equation}\label{141}
      \int_{\tk}^{t_{(\!k\!+\!1\!)}} \vp(x)\dx <\infty \quad \textnormal{for\ all\ }k\in\I_0.
    \end{equation}
  Let us prove these claims. At first, suppose that there exists $k\in\I_0$ such that $\int_{t_{(\!k\!-\!1\!)}}^{\tk} w(t) \Uq(t,\tk)\dt =\infty$. By definition, the weight $v$ is locally integrable, thus the function $\chi_{[\tk,\tk+1]}$ belongs to $L^p(v)$. Then
    \begin{align*}
      \infty & =         \int_{t_{(\!k\!-\!1\!)}}^{\tk} w(t) \Uq(t,\tk)\dt \\
             & =         \int_{t_{(\!k\!-\!1\!)}}^{\tk} w(t) \Uq(t,\tk)\dt \lt \int_{\tk}^\infty \chi_{[\tk,\tk+1]}(x)\dx \rt^q \\ 
             & =         \int_{t_{(\!k\!-\!1\!)}}^{\tk} w(t) \lt \int_{\tk}^\infty U(t,x) \chi_{[\tk,\tk+1]}(x)\dx \rt^q \dt \\ 
             & \le       \int_0^\infty w(t) \lt \int_t^\infty U(t,x) \chi_{[\tk,\tk+1]}(x)\dx \rt^q \dt, 
    \end{align*}
  whereas $\|\chi_{[\tk,\tk+1]}\|_{L^p(v)}<\infty$. This contradicts \eqref{8}, hence \eqref{142} holds. Next, assume that there exists $k\in\I_0$ such that $\int_{\tk}^{t_{(\!k\!+\!1\!)}} \vp(x)\dx =\infty.$ Then, by Proposition \ref{107}, for every $M\in\N$ there exists a~function $g_M$ such that $\|g_M\|_{L^p(v)}=1$ and $\int_{\tk}^{t_{(\!k\!+\!1\!)}} g_m(x) \dx > M$. By the definition of a~weight and a~$\theta$-regular kernel, the term $\int_{t_{(\!k\!-\!1\!)}}^{\tk} w(t) \Uq(t,\tk) \dt$ is strictly positive. We get
    \begin{align*}
      \int_0^\infty w(t) \lt \int_t^\infty U(t,x) g_M(x)\dx \rt^q \dt & \ge \int_{t_{(\!k\!-\!1\!)}}^{\tk} w(t) \lt \int_{\tk}^{t_{(\!k\!+\!1\!)}} U(t,x) g_M(x)\dx \rt^q \dt \\
                                                                      & \ge \int_{t_{(\!k\!-\!1\!)}}^{\tk} w(t) \Uq(t,\tk) \dt \lt \int_{\tk}^{t_{(\!k\!+\!1\!)}} g_M(x)\dx \rt^q  \\
                                                                      & \ge M^q \int_{t_{(\!k\!-\!1\!)}}^{\tk} w(t) \Uq(t,\tk) \dt \\
                                                                      & =   M^q \int_{t_{(\!k\!-\!1\!)}}^{\tk} w(t) \Uq(t,\tk) \dt\ \|g_M\|_{L^p(v)}
    \end{align*}
  for all $M\in\N$. This is a~contradiction with \eqref{8}. Hence, \eqref{141} must be true. 
  
  Thanks to \eqref{142}, Proposition \ref{107} yields that for every $k\in\I_0$ we can find a~function $h_k$ supported in $[\tk, t_{(\!k\!+\!1\!)}]$ and such that $\int_{\tk}^{t_{(\!k\!+\!1\!)}} h_k^p(x)v(x)\dx = 1$ and
    \[
      \lt \int_{\tk}^{t_{(\!k\!+\!1\!)}} \vp(x)\dx \rt^\rpp \le 2 \int_{\tk}^{t_{(\!k\!+\!1\!)}} h_k(x)\dx.
    \]
  Furthermore, it holds
    \[
      \sum_{k\in\I_0} \lt \int_{t_{(\!k\!-\!1\!)}}^{\tk} w(t) \Uq(t,\tk)\dt \rt^\rq \lt \int_{\tk}^{t_{(\!k\!+\!1\!)}} \vp(x)\dx \rt^\rpp <\infty,
    \]
  since the sum involves a~finite number of terms and each of them is finite due to \eqref{142} and \eqref{141}. By Proposition \ref{108}, we may find a~nonnegative sequence $\{d_k\}_{k\in\I_0}$ such that $\sum_{k\in\I_0} d_k^p = 1$ and
    \begin{multline*}
      \lt \sum_{k\in\I_0} \lt \int_{t_{(\!k\!-\!1\!)}}^{\tk} w(t) \Uq(t,\tk)\dt \rt^\rq \lt \int_{\tk}^{t_{(\!k\!+\!1\!)}} \vp(x)\dx \rt^\rpp \rt^\jr \\
        \le 2 \lt \sum_{k\in\I_0} d_k^q \int_{t_{(\!k\!-\!1\!)}}^{\tk} w(t) \Uq(t,\tk)\dt \lt \int_{\tk}^{t_{(\!k\!+\!1\!)}} \vp(x)\dx \rt^\frac{q}{p'} \rt^\jq.
    \end{multline*} 
  Define the function $h:=\sum_{k\in\I_0} d_k h_k$. Then it is easy to verify that $\|h\|_{L^p(v)}=1$. Moreover, we get the following estimate.
    \begin{align*}
                 \sum_{k\in\I_0} \! \lt \int_{t_{\!(\!k\!-\!1\!)}}^{\tk} \hspace{-8pt} w(t) \Uq(t,\tk\!)\dt \rt^{\!\rq} \!\! \lt \int_{\tk}^{t_{(\!k\!+\!1\!)}}\hspace{-8pt} \vp\!\!(x)\dx \rt^{\!\rpp}  \hspace{-6pt} &\lesssim  \lt \sum_{k\in\I_0} \! d_k^q \!\! \int_{t_{\!(\!k\!-\!1\!)}}^{\tk} \hspace{-8pt} w(t) \Uq(t,\tk\!)\dt \lt \int_{\tk}^{t_{\!(\!k\!+\!1\!)}} \hspace{-7pt} \vp\!\!(x)\dx \rt^{\!\frac{q}{p'}} \rt^{\!\rq} \\
      & \lesssim  \lt \sum_{k\in\I_0} \! d_k^q \!\! \int_{t_{(\!k\!-\!1\!)}}^{\tk} \hspace{-6pt} w(t) \Uq(t,\tk)\dt \lt \int_{\tk}^{t_{\!(\!k\!+\!1\!)}}\!\! h_k(x)\dx \rt^{\!q} \rt^{\!\rq} \\
      & =         \lt \sum_{k\in\I_0} \int_{t_{(\!k\!-\!1\!)}}^{\tk} w(t) \Uq(t,\tk)\dt \lt \int_{\tk}^{t_{(\!k\!+\!1\!)}} h(x)\dx \rt^q \rt^\rq \\
      & \le       \lt \sum_{k\in\I_0} \int_{t_{(\!k\!-\!1\!)}}^{\tk} w(t) \lt \int_{\tk}^{t_{(\!k\!+\!1\!)}} h(x) U(t,x) \dx \rt^q \dt \rt^\rq \\
      & \le       \lt \sum_{k\in\I_0} \int_0^\infty w(t) \lt \int_{t}^{\infty} h(x) U(t,x) \dx \rt^q \dt \rt^\rq \\
      & \le       C^r \|h\|_{L^p(v)}\\
      & =         C^r.
    \end{align*}
  The covering sequence $\{t_k\}_{k\in\I}$ was arbitrarily chosen in the beginning, hence we may take the supremum over all covering sequences, obtaining the relation
    \[
      D_2 \lesssim C^r.
    \]
  The proof of the implication ``(i)$\Rightarrow$(ii)'' and of the related estimates is then finished. \\
            
  ``(iii)$\Rightarrow$(ii)''. Assume that $A_1<\infty$ and $A_2<\infty$. We will prove the inequality $D_1+D_2\lesssim A_1+A_2$. Let $\{t_k\}_{k\in\I}$ be an~arbitrary covering sequence indexed by a~set $\I$. Then it holds
    \begin{align*}
      &         \sum_{k\in\I_0} \lt \int_{t_{(\!k\!-\!1\!)}}^{t_k} w(x)\dx \rt^\rq \lt \int_{\tk}^{t_{(\!k\!+\!1\!)}} \Upp(\tk,t) \vp(t)\dt \rt^\rpp \\
      &\approx  \sum_{k\in\I_0} \int_{t_{(\!k\!-\!1\!)}}^{t_k} \lt \int_{t_{(\!k\!-\!1\!)}}^x w(s)\ds \rt^\rp w(x)\dx \lt \int_{\tk}^{t_{(\!k\!+\!1\!)}} \Upp(\tk,t) \vp(t)\dt \rt^\rpp \\
      &\le      \sum_{k\in\I_0} \int_{t_{(\!k\!-\!1\!)}}^{t_k} \lt \int_{0}^x w(s)\ds \rt^\rp w(x)\dx \lt \int_{x}^{\infty} \Upp(x,t) \vp(t)\dt \rt^\rpp \\
      &=        A_1.
    \end{align*}
  Taking the supremum over all covering sequences, we obtain $D_1\lesssim A_1$. Similarly, for any fixed covering sequence $\{t_k\}_{k\in\I}$ we get 
    \begin{align*}
      &          \sum_{k\in\I_0} \lt \int_{t_{(\!k\!-\!1\!)}}^{t_k} w(t) \Uq(t,\tk)\dt \rt^\rq \lt \int_{\tk}^{t_{(\!k\!+\!1\!)}} \vp(s)\ds \rt^\rpp \\
      &\approx   \sum_{k\in\I_0} \int_{t_{(\!k\!-\!1\!)}}^{t_k} \lt \int_{t_{(\!k\!-\!1\!)}}^t w(x) \Uq(x,\tk)\dx \rt^\rp  w(t) \Uq(t,\tk)\dt \, \lt \int_{\tk}^{t_{(\!k\!+\!1\!)}} \vp(s)\ds \rt^\rpp \\
      &\lesssim  \sum_{k\in\I_0} \int_{t_{(\!k\!-\!1\!)}}^{t_k} \lt \int_{t_{(\!k\!-\!1\!)}}^t w(x) \Uq(x,t)\dx \rt^\rp  w(t) \Uq(t,\tk)\dt \, \lt \int_{\tk}^{t_{(\!k\!+\!1\!)}} \vp(s)\ds \rt^\rpp \\
      &\qquad +  \sum_{k\in\I_0} \int_{t_{(\!k\!-\!1\!)}}^{t_k} \lt \int_{t_{(\!k\!-\!1\!)}}^t w(x) \dx \rt^\rp w(t) U^r(t,\tk) \dt \, \lt \int_{\tk}^{t_{(\!k\!+\!1\!)}} \vp(s)\ds \rt^\rpp \\
      &\le       \sum_{k\in\I_0} \int_{t_{(\!k\!-\!1\!)}}^{t_k} \lt \int_{0}^t w(x) \Uq(x,t)\dx \rt^\rp  w(t) \Uq(t,\tk)\dt \, \lt \int_{\tk}^\infty \vp(s)\ds \rt^\rpp \\
      &\qquad +  \sum_{k\in\I_0} \int_{t_{(\!k\!-\!1\!)}}^{t_k} \lt \int_{0}^t w(x) \dx \rt^\rp w(t) U^r(t,\tk) \dt \, \lt \int_{\tk}^\infty \vp(s)\ds \rt^\rpp \\
      &\le       \sum_{k\in\I_0} \int_{t_{(\!k\!-\!1\!)}}^{t_k} \lt \int_{0}^t w(x) \Uq(x,t)\dx \rt^\rp  w(t) \sup_{z\in[t,\infty)} \Uq(t,z) \lt \int_{z}^\infty \vp(s)\ds \rt^\rpp \dt \\
      &\qquad +  \sum_{k\in\I_0} \int_{t_{(\!k\!-\!1\!)}}^{t_k} \lt \int_{0}^t w(x) \dx \rt^\rp w(t) \lt \int_t^\infty \Upp(t,s) \vp(s)\ds \rt^\rpp \dt\\
      &=         A_2 + A_1.
    \end{align*}
  Once again, taking the supremum over all covering sequences, we get $D_2\lesssim A_2+A_1$. Hence, we have shown that $D_1+D_2\lesssim A_1+A_2$ and the implication ``(iii)$\Rightarrow$(ii)'' is proved. \\

  \pagebreak

  ``(ii) $\Rightarrow$ (iii)''. Suppose that $D_1<\infty$ and $D_2<\infty$ and let us show that $A_1+A_2\lesssim D_1+D_2$ then. 
  
  Similarly as in the proof of ``(ii) $\Rightarrow$ (i)'', let us first assume that $\int_0^\infty w = 2^K$ for some $K\in\Z$ . Let $\mu\in\Z$ be such that $\mu\le K-2$ and define $\Zm$ by \eqref{23}. Let $\{t_k\}_{k=-\infty}^{K}\subset (0,\infty]$ be the~sequence of points from Theorem \ref{9} and $\{k_n\}_{n=0}^N\subset\Zm$ be the subsequence of indices granted by the \nopagebreak[1] same theorem. Then 
    \Bdef{1}\Bdef{2}\Bdef{3} 
    \begin{align}
                & \int_{\tm}^\infty \lt \int_0^t w(x)\dx \rt^\rp w(t) \lt \int_t^\infty \Upp(t,z)\vp(z)\dz \rt^\rpp \dt \nonumber\\
      & =         \sum_{k\in\Zm} \int_{\dk} \lt \int_0^t w(x)\dx \rt^\rp w(t) \lt \int_t^\infty \Upp(t,z)\vp(z)\dz \rt^\rpp \dt \nonumber\\
      & \le       \sum_{k\in\Zm} \int_0^{t_{(\!k\!+\!1\!)}} \lt \int_0^t w(x)\dx \rt^\rp w(t) \dt\lt \int_{t_k}^\infty \Upp(t_k,z)\vp(z)\dz \rt^\rpp \nonumber\\
      & \lesssim  \sum_{k\in\Zm} \Tkrq \lt \int_{t_k}^\infty \Upp(t_k,z)\vp(z)\dz \rt^\rpp \label{24}\\
      & \approx   \sum_{n=0}^N \sum_{k=k_n}^{k_{(\!n\!+\!1\!)}\!-1} \Tkrq \lt \int_{t_k}^{t_{k_{(\!n\!+\!1\!)}}} \Upp(t_k,z)\vp(z)\dz \rt^\rpp \nonumber\\
      & \qquad +  \sum_{n=0}^{N-1} \sum_{k=k_n}^{k_{(\!n\!+\!1\!)}\!-1} \Tkrq \Ur(t_k,t_{k_{(\!n\!+\!1\!)}}) \lt \int_{t_{k_{(\!n\!+\!1\!)}}}^\infty \vp(z)\dz \rt^\rpp \nonumber\\
      & \qquad +  \sum_{n=0}^{N-1} \sum_{k=k_n}^{k_{(\!n\!+\!1\!)}\!-1} \Tkrq \lt \int_{t_{k_{(\!n\!+\!1\!)}}}^\infty \Upp(t_{k_{(\!n\!+\!1\!)}},z)\vp(z)\dz \rt^\rpp \nonumber\\
      & =: \B{1}+\B{2}+\B{3}\nonumber.
    \end{align}
  In step \eqref{24} we used \eqref{18}. We continue by estimating each of the separate terms. 
    \Bdef{4}\Bdef{5}
    \begin{align*}
      \B{1}\! & =        \sum_{n=0}^N \sum_{k=k_n}^{k_{(\!n\!+\!1\!)}\!-1} \Tkrq \lt \int_{t_k}^{t_{k_{(\!n\!+\!1\!)}}} \Upp(t_k,z)\vp(z)\dz \rt^\rpp \\
            & =        \sum_{n=0}^N \! \Theta^{(\! k_{(\!n\!+\!1\!)}\!-\! 1 \!)\rq} \! \lt \int_{\Delta_{(\!k_{(\!n\!+\!1\!)}\!-\!1\!)}} \hspace{-26pt} \Upp \! (t_{(\!k_{(\!n\!+\!1\!)}\!-1\!)},z)\vp\!(z)\dz \rt^{\!\rpp} \hspace{-8pt} + \! \sum_{n\in\A} \!\! \sum_{k=k_n}^{k_{(\!n\!+\!1\!)}\!-2} \hspace{-8pt} \Tkrq \! \lt \int_{\tk}^{t_{k_{(\!n\!+\!1\!)}}} \! \Upp\!(\tk,z)\vp\!(z)\dz \rt^{\!\rpp} \\
            & \lesssim \sum_{n=0}^N \! \Theta^{k_{(\!n\!+\!1\!)}\rq} \! \lt \int_{\Delta_{(\!k_{(\!n\!+\!1\!)}\!-\!1\!)}} \hspace{-26pt} \Upp \! (t_{(\!k_{(\!n\!+\!1\!)}\!-1\!)},z)\vp\!(z)\dz \rt^{\!\rpp} \hspace{-8pt} + \!\sum_{n\in\A} \!\! \sum_{k=k_n}^{k_{(\!n\!+\!1\!)}\!-2} \hspace{-8pt} \Tkrq \! \lt \int_{\tk}^{t_{(\!k_{(\!n\!+\!1\!)}\!-\!1\!)}} \!\! \Upp\!(\tk,z)\vp\!(z)\dz \rt^{\!\rpp} \\
            & \qquad + \sum_{n\in\A} \sum_{k=k_n}^{k_{(\!n\!+\!1\!)}\!-2} \Tkrq \lt \int_{\Delta_{(\!k_{(\!n\!+\!1\!)}\!-1\!)}} \hspace{-24pt} \Upp(\tk,z)\vp(z)\dz \rt^\rpp \\
            & \lesssim \sum_{n=0}^N \! \Theta^{k_{(\!n\!+\!1\!)}\rq} \! \lt \int_{\Delta_{(\!k_{(\!n\!+\!1\!)}\!-\!1\!)}} \hspace{-26pt} \Upp \! (t_{(\!k_{(\!n\!+\!1\!)}\!-\!1\!)},z)\vp\!(z)\dz \rt^{\!\rpp} \hspace{-8pt} + \! \sum_{n\in\A} \!\! \sum_{k=k_n}^{k_{(\!n\!+\!1\!)}\!-2} \hspace{-8pt} \Tkrq \Ur \! (\tk,t_{(\!k_{(\!n\!+\!1\!)}\!-\!1\!)}) \lt \int_{\Delta_{(\!k_{(\!n\!+\!1\!)}\!-\!1\!)}} \hspace{-24pt} \vp\!(z)\dz \rt^{\!\rpp} \\
            & \qquad + \sum_{n\in\A} \sum_{k=k_n}^{k_{(\!n\!+\!1\!)}\!-2} \Tkrq \Ur(\tk,t_{(\!k_{(\!n\!+\!1\!)}\!-1\!)}) \lt \int_{\tk}^{t_{(\!k_{(\!n\!+\!1\!)}\!-1\!)}} \vp(z)\dz \rt^\rpp \\
            & \qquad + \sum_{n\in\A} \sum_{k=k_n}^{k_{(\!n\!+\!1\!)}\!-2} \Tkrq \lt \int_{\Delta_{(\!k_{(\!n\!+\!1\!)}\!-1\!)}} \hspace{-20pt} \Upp(t_{(\!k_{(\!n\!+\!1\!)}\!-1\!)},z)\vp(z)\dz \rt^\rpp \\
            & \lesssim \sum_{n=0}^N \! \Theta^{k_{(\!n\!+\!1\!)}\rq} \! \lt \int_{\Delta_{(\!k_{\!(\!n\!+\!1\!)}\!-\!1\!)}} \hspace{-27pt} \Upp \!\! (t_{(\!k_{\!(\!n\!+\!1\!)}\!-\!1\!)},z)\vp\!\!(z)\dz \rt^{\!\rpp} \hspace{-8pt} + \! \sum_{n\in\A} \!\! \sum_{k=k_n}^{k_{(\!n\!+\!1\!)}\!-2} \hspace{-8pt} \Tkrq \Ur\!(\tk,t_{(\!k_{\!(\!n\!+\!1\!)}\!-\!1\!)}\!) \lt \int_{\tk}^{t_{k_{\!(\!n\!+\!1\!)}}} \!\! \vp\!\!(z)\dz \rt^{\!\rpp} \\
            & \qquad + \sum_{n\in\A} \sum_{k=k_n}^{k_{(\!n\!+\!1\!)}\!-2} \Tkrq \lt \int_{\Delta_{(\!k_{(\!n\!+\!1\!)}\!-1\!)}} \hspace{-20pt} \Upp(t_{(\!k_{(\!n\!+\!1\!)}\!-1\!)},z)\vp(z)\dz \rt^\rpp \\
            & \lesssim \sum_{n=0}^N \! \Theta^{k_{(\!n\!+\!1\!)}\rq} \! \lt \int_{\Delta_{(\!k_{\!(\!n\!+\!1\!)}\!-\!1\!)}} \hspace{-27pt} \Upp \!\! (t_{(\!k_{\!(\!n\!+\!1\!)}\!-\!1\!)},z)\vp\!\!(z)\dz \rt^{\!\rpp} \hspace{-8pt} + \! \sum_{n\in\A} \!\! \sum_{k=k_n}^{k_{(\!n\!+\!1\!)}\!-2} \hspace{-8pt} \Tkrq \Ur\!(\tk,t_{(\!k_{\!(\!n\!+\!1\!)}\!-\!1\!)}\!) \lt \int_{\tk}^{t_{k_{\!(\!n\!+\!1\!)}}} \!\! \vp\!\!(z)\dz \rt^{\!\rpp} \\
            & := \B{4} + \B{5}.
    \end{align*}
  For $\B{4}$ we have
    \begin{align}
      \B{4} & =        \sum_{n=0}^N \Theta^{k_{(\!n\!+\!1\!)}\rq} \lt \int_{\Delta_{(\!k_{(\!n\!+\!1\!)}\!-1\!)}} \Upp(t_{(\!k_{(\!n\!+\!1\!)}\!-1\!)},z)\vp(z)\dz \rt^\rpp \nonumber\\  
            & \lesssim \sum_{n=0}^N \lt \int_{\Delta_{(\!k_{(\!n\!+\!1\!)}\!-2\!)}} w(x) \dx \rt^\rq \lt \int_{\Delta_{(\!k_{(\!n\!+\!1\!)}\!-1\!)}} \Upp(t_{(\!k_{(\!n\!+\!1\!)}\!-1\!)},z)\vp(z)\dz \rt^\rpp \label{25}\\  
            & \le      \sum_{k\in\Zm} \lt \int_{\Delta_{(\!k-\!1\!)}} w(x) \dx \rt^\rq \lt \int_{\dk} \Upp(t_{(\!k-1\!)},z)\vp(z)\dz \rt^\rpp \nonumber\\  
            & \le      D_1. \nonumber
    \end{align}              
  In step \eqref{25} we used \eqref{18}. Let us formally define $k_{(-1)}:=\mu-1$ and proceed with estimating $\B{5}$.
    \begin{align}
      \B{5} & =        \sum_{n\in\A} \sum_{k=k_n}^{k_{(\!n\!+\!1\!)}\!-2} \Tkrq \Ur(\tk,t_{(\!k_{(\!n\!+\!1\!)}\!-1\!)}) \lt \int_{t_{k_n}}^{t_{k_{(\!n\!+\!1\!)}}} \vp(z)\dz \rt^\rpp \nonumber\\
            & \le      \sum_{n\in\A} \lt \sum_{k=k_n}^{k_{(\!n\!+\!1\!)}\!-2} \Tk \Uq(\tk,t_{(\!k_{(\!n\!+\!1\!)}\!-1\!)}) \rt^\rq \lt \int_{t_{k_n}}^{t_{k_{(\!n\!+\!1\!)}}} \vp(z)\dz \rt^\rpp \label{26}\\
            & \lesssim \sum_{n\in\A} \lt \sum_{k=k_n}^{k_{(\!n\!+\!1\!)}\!-2} \Tk \Uq(\dk) \rt^\rq \lt \int_{t_{k_n}}^{t_{k_{(\!n\!+\!1\!)}}} \vp(z)\dz \rt^\rpp \label{27}\\
            & \lesssim \sum_{n\in\A} \lt \sum_{k=k_{(\!n\!-\!1\!)}}^{k_n-1} \Tk \Uq(\dk) \rt^\rq \lt \int_{t_{k_n}}^{t_{k_{(\!n\!+\!1\!)}}} \vp(z)\dz \rt^\rpp \label{28}\\
            & \lesssim \sum_{n\in\A} \lt \int_{t_{k_{(\!n\!-\!2\!)}}}^{t_{k_n}} w(x) \Uq(x,t_{k_n}) \dx \rt^\rq \lt \int_{t_{k_n}}^{t_{k_{(\!n\!+\!1\!)}}} \vp(z)\dz \rt^\rpp \label{29}\\
            & \le      \sum_{n=1}^{N} \lt \int_{t_{k_{(\!n\!-\!2\!)}}}^{t_{k_n}} w(x) \Uq(x,t_{k_n}) \dx \rt^\rq \lt \int_{t_{k_n}}^{t_{k_{(\!n\!+\!1\!)}}} \vp(z)\dz \rt^\rpp \nonumber\\
            & =        \sum_{i=0}^1 \sum_{\substack{{1\le n\le N}\\ {n\mod 2=i}}} \lt \int_{t_{k_{(\!n\!-\!2\!)}}}^{t_{k_n}} w(x) \Uq(x,t_{k_n}) \dx \rt^\rq \lt \int_{t_{k_n}}^{t_{k_{(\!n\!+\!1\!)}}} \vp(z)\dz \rt^\rpp \label{31}\\                         
            & \lesssim D_2.\nonumber
    \end{align}
  Since $\rq>1$, the estimate \eqref{26} follows by convexity of the $\rq$-th power. Step \eqref{27} is due to Proposition \ref{4}. Step \eqref{28} then follows by \eqref{14}, and step \eqref{29} by \eqref{16}. Finally, in \eqref{31} we split the even and odd indices $n$, so that the intervals $(t_{k_{(\!n\!-\!2\!)}},t_{k_n})$ involved in each $n$-indexed sum do not overlap. This standard step will be also used in other estimates further on. 
  
  So far we have proved
    \[
      \B{1} \lesssim \B{4} + \B{5} \lesssim D_1 + D_2.
    \] 
  The term $\B{2}$ is estimated as follows.
    \begin{align}
      \B{2} & =        \sum_{n=0}^{N-1} \sum_{k=k_n}^{k_{(\!n\!+\!1\!)}\!-1} \Tkrq \Ur(t_k,t_{k_{(\!n\!+\!1\!)}}) \lt \int_{t_{k_{(\!n\!+\!1\!)}}}^\infty \vp(z)\dz \rt^\rpp \nonumber\\
            & \le      \sum_{n=0}^{N-1} \lt \sum_{k=k_n}^{k_{(\!n\!+\!1\!)}\!-1} \Tk \Uq(t_k,t_{k_{(\!n\!+\!1\!)}}) \rt^\rq \lt \int_{t_{k_{(\!n\!+\!1\!)}}}^\infty \vp(z)\dz \rt^\rpp \label{32}\\
            & \lesssim \sum_{n=0}^{N-1} \lt \sum_{k=k_n}^{k_{(\!n\!+\!1\!)}\!-1} \Tk \Uq(\dk) \rt^\rq \lt \int_{t_{k_{(\!n\!+\!1\!)}}}^\infty \vp(z)\dz \rt^\rpp \label{33}\\
            & =        \sum_{n=0}^{N-1} \lt \sum_{k=k_n}^{k_{(\!n\!+\!1\!)}\!-1} \Tk \Uq(\dk) \rt^\rq \lt \sum_{j=n+1}^N \int_{t_{k_j}}^{t_{k_{(\!j\!+\!1\!)}}} \vp(z)\dz \rt^\rpp \nonumber\\
            & \lesssim \sum_{n=0}^{N-1} \lt \sum_{k=k_n}^{k_{(\!n\!+\!1\!)}\!-1} \Tk \Uq(\dk) \rt^\rq \lt \int_{t_{k_{(\!n\!+\!1\!)}}}^{t_{k_{(\!n\!+\!2\!)}}} \vp(z)\dz \rt^\rpp \label{34}\\
            & \lesssim \sum_{n=0}^{N-1} \lt \int_{t_{k_{(\!n\!-\!1\!)}}}^{t_{k_{(\!n\!+\!1\!)}}} w(x) \Uq(x,t_{k_{(\!n\!+\!1\!)}}) \dx \rt^\rq \lt \int_{t_{k_{(\!n\!+\!1\!)}}}^{t_{k_{(\!n\!+\!2\!)}}} \vp(z)\dz \rt^\rpp \label{35}\\
            & =        \sum_{i=0}^1 \sum_{\substack{{1\le n\le N}\\ {n\mod 2=i}}} \lt \int_{t_{k_{(\!n\!-\!1\!)}}}^{t_{k_{(\!n\!+\!1\!)}}} w(x) \Uq(x,t_{k_{(\!n\!+\!1\!)}}) \dx \rt^\rq \lt \int_{t_{k_{(\!n\!+\!1\!)}}}^{t_{k_{(\!n\!+\!2\!)}}} \vp(z)\dz \rt^\rpp \nonumber\\
            & \lesssim D_2. \nonumber        
    \end{align}
  We used convexity of the $\rq$-th power to get \eqref{32}. Step \eqref{33} follows by Proposition \ref{4}. Inequality \eqref{34} is granted by Proposition \ref{3} equipped with \eqref{12}. Step \eqref{35} follows by \eqref{16}. We proved
  \[
    \B{2} \lesssim D_2.
  \]
  The term $\B{3}$ is first handled in the following way.
    \Bdef{6}\Bdef{7}
    \begin{align}
      \B{3} & =        \sum_{n=0}^{N-1} \sum_{k=k_n}^{k_{(\!n\!+\!1\!)}\!-1} \Tkrq \lt \int_{t_{k_{(\!n\!+\!1\!)}}}^\infty \Upp(t_{k_{(\!n\!+\!1\!)}},z)\vp(z)\dz \rt^\rpp \nonumber\\
            & \lesssim \sum_{n=0}^{N-1} \Theta^{k_{(\!n\!+\!1\!)}\rq} \lt \int_{t_{k_{(\!n\!+\!1\!)}}}^\infty \Upp(t_{k_{(\!n\!+\!1\!)}},z)\vp(z)\dz \rt^\rpp \nonumber\\
            & =        \sum_{n=0}^{N-1} \Theta^{k_{(\!n\!+\!1\!)}\rq} \lt \sum_{j=n+1}^N \int_{t_{k_j}}^{t_{k_{(\!j\!+\!1\!)}}} \Upp(t_{k_{(\!n\!+\!1\!)}},z)\vp(z)\dz \rt^\rpp \nonumber\\
            & \lesssim \sum_{n=0}^{N-2} \Theta^{k_{(\!n\!+\!1\!)}\rq} \lt \sum_{j=n+2}^N \Upp(t_{k_{(\!n\!+\!1\!)}},t_{k_j}) \int_{t_{k_j}}^{t_{k_{(\!j\!+\!1\!)}}} \vp(z)\dz \rt^\rpp \nonumber\\
            & \quad  + \sum_{n=0}^{N-1} \Theta^{k_{(\!n\!+\!1\!)}\rq} \lt \sum_{j=n+1}^N \int_{t_{k_j}}^{t_{k_{(\!j\!+\!1\!)}}} \Upp(t_{k_j},z)\vp(z)\dz \rt^\rpp \nonumber\\
            & =: \B{6}+\B{7}.\nonumber
    \end{align}
  Then, for $\B{6}$ we have
    \begin{align}
      \B{6} & =        \sum_{n=0}^{N-2} \Theta^{k_{(\!n\!+\!1\!)}\rq} \lt \sum_{j=n+2}^N \Upp(t_{k_{(\!n\!+\!1\!)}},t_{k_j}) \int_{t_{k_j}}^{t_{k_{(\!j\!+\!1\!)}}} \vp(z)\dz \rt^\rpp \nonumber\\
            & \le      \sum_{n=0}^{N-2} \Theta^{k_{(\!n\!+\!1\!)}\rq} \sum_{j=n+2}^N \Ur(t_{k_{(\!n\!+\!1\!)}},t_{k_j}) \lt \int_{t_{k_j}}^{t_{k_{(\!j\!+\!1\!)}}} \vp(z)\dz \rt^\rpp \label{36}\\
            & =        \sum_{j=2}^N \sum_{n=0}^{j-2} \Theta^{k_{(\!n\!+\!1\!)}\rq} \Ur(t_{k_{(\!n\!+\!1\!)}},t_{k_j}) \lt \int_{t_{k_j}}^{t_{k_{(\!j\!+\!1\!)}}} \vp(z)\dz \rt^\rpp \nonumber\\
            & \le      \sum_{j=2}^N \lt \sum_{n=0}^{j-2} \Theta^{k_{(\!n\!+\!1\!)}} \Uq(t_{k_{(\!n\!+\!1\!)}},t_{k_j}) \rt^\rq \lt \int_{t_{k_j}}^{t_{k_{(\!j\!+\!1\!)}}} \vp(z)\dz \rt^\rpp \label{38}\\
            & \le      \sum_{j=2}^N \lt \sum_{k=\mu}^{k_{(\!j\!-\!1\!)}} \Tk \Uq(t_k,t_{k_j}) \rt^\rq \lt \int_{t_{k_j}}^{t_{k_{(\!j\!+\!1\!)}}} \vp(z)\dz \rt^\rpp \nonumber\\
            & \lesssim \sum_{j=2}^N \lt \sum_{k=\mu}^{k_{(\!j\!-\!1\!)}} \Tk \Uq(\dk) \rt^\rq \lt \int_{t_{k_j}}^{t_{k_{(\!j\!+\!1\!)}}} \vp(z)\dz \rt^\rpp \label{39}\\
            & \le      \sum_{j=2}^N \lt \sum_{k=\mu}^{k_j-1} \Tk \Uq(\dk) \rt^\rq \lt \int_{t_{k_j}}^{t_{k_{(\!j\!+\!1\!)}}} \vp(z)\dz \rt^\rpp \nonumber\\
            & \lesssim \sum_{j=2}^N \lt \sum_{k=k_{(\!j\!-\!1\!)}}^{k_j-1} \Tk \Uq(\dk) \rt^\rq \lt \int_{t_{k_j}}^{t_{k_{(\!j\!+\!1\!)}}} \vp(z)\dz \rt^\rpp \label{58}\\
            & \lesssim \sum_{j=2}^N \lt \int_{t_{k_{(\!j\!-\!2\!)}}}^{t_{k_j}} w(x) \Uq(x,t_{k_j})\dx \rt^\rq \lt \int_{t_{k_j}}^{t_{k_{(\!j\!+\!1\!)}}} \vp(z)\dz \rt^\rpp \label{40}\\
            & =        \sum_{i=0}^1 \sum_{\substack{{2\le j\le N}\\ {j\mod 2=i}}} \lt \int_{t_{k_{(\!j\!-\!2\!)}}}^{t_{k_j}} w(x) \Uq(x,t_{k_j})\dx \rt^\rq \lt \int_{t_{k_j}}^{t_{k_{(\!j\!+\!1\!)}}} \vp(z)\dz \rt^\rpp \nonumber\\
            & \lesssim D_2. \nonumber
    \end{align}        
  Inequality \eqref{36} follows from concavity of the $\rpp$-th power since $\rpp<1$. Similarly, convexity of the $\rq$-th power yields \eqref{38}. Step \eqref{39} is due to Proposition \ref{4}, step \eqref{58} follows by \eqref{13}, and in step \eqref{40} we used \eqref{16}. We continue as follows.
    \begin{align}
      \B{7} & =        \sum_{n=0}^{N-1} \Theta^{k_{(\!n\!+\!1\!)}\rq} \lt \sum_{j=n+1}^N \int_{t_{k_j}}^{t_{k_{(\!j\!+\!1\!)}}} \Upp(t_{k_j},z)\vp(z)\dz \rt^\rpp \nonumber\\
            & \lesssim \sum_{n=0}^{N-1} \Theta^{k_{(\!n\!+\!1\!)}\rq} \lt \int_{t_{k_{(\!n\!+\!1\!)}}}^{t_{k_{(\!n\!+\!2\!)}}} \Upp(t_{k_j},z)\vp(z)\dz \rt^\rpp \label{41}\\
            & \lesssim \sum_{n=0}^{N-1} \lt \int_{t_{k_n}}^{t_{k_{(\!n\!+\!1\!)}}} w(x)\dx \rt^\rq \lt \int_{t_{k_{(\!n\!+\!1\!)}}}^{t_{k_{(\!n\!+\!2\!)}}} \Upp(t_{k_j},z)\vp(z)\dz \rt^\rpp \label{42}\\
            & \le      D_1. \nonumber
    \end{align}            
  To get \eqref{41}, we used Proposition \ref{3}, and in \eqref{42} we applied \eqref{18}. We have proved
    \[
      \B{3} \lesssim \B{6}+ \B{7} \lesssim D_1 + D_2.
    \]
  Combining all the estimates we have obtained so far, we get
    \begin{equation}\label{43}
      \int_{\tm}^\infty \lt \int_0^t w(x)\dx \rt^\rp w(t) \lt \int_t^\infty \Upp(t,z)\vp(z)\dz \rt^\rpp \dt\,  \lesssim\,  D_1 + D_2.
    \end{equation}

  In the following part, we are going to perform estimates related to the term $A_2$. We have
    \Bdef{8}\Bdef{9}
    \begin{align}
      &          \int_{\tm}^\infty \lt \int_{\tm}^t w(x) \Uq(x,t) \dx \rt^\rp w(t) \sup_{z\in[t,\infty)} \Uq(t,z) \lt \int_z^\infty \vp(s)\ds \rt^\rpp \dt \nonumber\\
      & =        \sum_{n=0}^{N} \int_{\Delta_{(\!k_{(\!n\!+\!1\!)}\!-1\!)}} \lt \int_{\tm}^t w(x) \Uq(x,t) \dx \rt^\rp w(t) \sup_{z\in[t,\infty)} \Uq(t,z) \lt \int_z^\infty \vp(s)\ds \rt^\rpp \dt \nonumber\\
      & \quad +  \sum_{n\in\A} \int_{t_{k_n}}^{t_{(\!k_{(\!n\!+\!1\!)}\!-1\!)}} \lt \int_{\tm}^t w(x) \Uq(x,t) \dx \rt^\rp w(t) \sup_{z\in[t,\infty)} \Uq(t,z) \lt \int_z^\infty \vp(s)\ds \rt^\rpp \dt \nonumber\\
      & =:       \B{8}+\B{9}. \nonumber
    \end{align}                   
  By \eqref{15}, the term $\B{8}$ is further estimated as follows.
    \Bdef{10}\Bdef{11}
    \begin{align*}
      \B{8} & =         \sum_{n=0}^{N} \int_{\Delta_{(\!k_{(\!n\!+\!1\!)}\!-1\!)}} \lt \int_{\tm}^t w(x) \Uq(x,t) \dx \rt^\rp w(t) \sup_{z\in[t,\infty)} \Uq(t,z) \lt \int_z^\infty \vp(s)\ds \rt^\rpp \dt \\
            & \lesssim  \sum_{n=1}^{N} \lt \sum_{j=k_{(n\!-\!1\!)}}^{k_n\!-1} \Tj \Uj \rt^\rp \int_{\Delta_{(\!k_{(\!n\!+\!1\!)}\!-1\!)}} \hspace{-10pt} w(t) \sup_{z\in[t,\infty)} \Uq(t,z) \lt \int_z^\infty \vp(s)\ds \rt^\rpp \dt \\
            & \quad +   \sum_{n=0}^{N} \int_{\Delta_{(\!k_{(\!n\!+\!1\!)}\!-1\!)}} \hspace{-10pt} \Theta^{\rp (\!k_{(\!n\!+\!1\!)}\!-1\!)} U^{\frac{rq}p} (t_{(\!k_{(\!n\!+\!1\!)}\!-1\!)},t) \, w(t) \sup_{z\in[t,\infty)} \Uq(t,z) \lt \int_z^\infty \vp(s)\ds \rt^\rpp \dt \\
            & =:        \B{10}+\B{11}.   
    \end{align*}
  Notice that that, in $\B{10}$, the term corresponding to $n=0$ is indeed omitted, since for any $t\in\Delta_{\mu}$ it holds $\int_{\tm}^t w(x) \Uq(x,t) \dx \lesssim \Theta^\mu \Uq(\tm,t)$ and the right-hand side is thus already represented by the $0$-th term in $\B{11}$. 
  
  Let us note that in what follows, expressions such as $\sup_{x\in(y,\infty]} \fii(x)$ appear even where the argument $\fii(x)$ is undefined for $x=\infty$. To fix this formal detail, suppose that, in such cases, $\sup_{x\in(y,\infty]} \fii(x)$ is simply redefined as $\sup_{x\in(y,\infty)} \fii(x)$. This will make expressions such as $\sum_{n=1}^{N} \sup_{x\in[t_{k_n},t_{k_{(\!n\!+\!1\!)}}]} \fii(x)$ formally correct without need of treating the $(N\!+\!1)$-st summand separately. Besides this, the standard notation $\overline{\Delta}_k$ is used to denote the closure of $\dk$, i.e.~the interval $[\tk,t_{(\!k\!+\!1\!)}]$.
  
  We then estimate $\B{10}$.
    \Bdef{12}\Bdef{13}
    \begin{align}
      \B{10} & =        \sum_{n=1}^{N} \lt \sum_{j=k_{(n\!-\!1\!)}}^{k_n\!-1} \Tj \Uj \rt^\rp \int_{\Delta_{(\!k_{(\!n\!+\!1\!)}\!-1\!)}} \hspace{-10pt} w(t) \sup_{z\in[t,\infty)} \Uq(t,z) \lt \int_z^\infty \vp(s)\ds \rt^\rpp \dt \nonumber \\
             & \lesssim \sum_{n=1}^{N} \lt \sum_{j=k_{(n\!-\!1\!)}}^{k_n\!-1} \Tj \Uj \rt^\rp \Theta^{k_{(\!n\!+\!1\!)}\!-1} \hspace{-10pt} \sup_{z\in[t_{(\!k_{(\!n\!+\!1\!)}\!-1\!)},\infty)} \hspace{-7pt} \Uq(t_{(\!k_{(\!n\!+\!1\!)}\!-1\!)},z) \lt \int_z^\infty \vp(s)\ds \rt^\rpp  \label{200} \\
             & \lesssim \sum_{n=1}^{N} \lt \sum_{j=k_{(n\!-\!1\!)}}^{k_n\!-1} \Tj \Uj \rt^\rp \Theta^{k_{(\!n\!+\!1\!)}\!-1} \hspace{-10pt} \sup_{z\in\overline{\Delta}_{(\!k_{(\!n\!+\!1\!)}\!-1\!)}} \hspace{-7pt} \Uq(t_{(\!k_{(\!n\!+\!1\!)}\!-1\!)},z) \lt \int_z^\infty \vp(s)\ds \rt^\rpp  \label{201} \\
             & \quad +  \sum_{n=1}^{N\!-\!1} \lt \sum_{j=k_{(n\!-\!1\!)}}^{k_n\!-1} \Tj \Uj \rt^\rp \Theta^{k_{(\!n\!+\!1\!)}\!-1} \hspace{-10pt} \sup_{z\in[t_{k_{(\!n\!+\!1\!)}},\infty)} \hspace{-7pt} \Uq(t_{k_{(\!n\!+\!1\!)}},z) \lt \int_z^\infty \vp(s)\ds \rt^\rpp \nonumber \\
             & \lesssim \sum_{n=1}^{N} \lt \sum_{j=k_{(n\!-\!1\!)}}^{k_n\!-1} \Tj \Uj \rt^\rp \Theta^{k_{(\!n\!+\!1\!)}\!-1} \hspace{-10pt} \sup_{z\in\overline{\Delta}_{(\!k_{(\!n\!+\!1\!)}\!-1\!)}} \hspace{-7pt} \Uq(t_{(\!k_{(\!n\!+\!1\!)}\!-1\!)},z) \lt \int_z^\infty \vp(s)\ds \rt^\rpp  \label{004} \\
             & \quad +  \sum_{n=1}^{N\!-\!1} \lt \sum_{j=k_n}^{k_{(n\!+\!1\!)}-\!1} \Tj \Uj \rt^\rp \Theta^{k_{(\!n\!+\!1\!)}\!-1} \hspace{-10pt} \sup_{z\in[t_{k_{(\!n\!+\!1\!)}},\infty)} \hspace{-7pt} \Uq(t_{k_{(\!n\!+\!1\!)}},z) \lt \int_z^\infty \vp(s)\ds \rt^\rpp \nonumber \\
             & =:       \B{12}+\B{13}. \nonumber
    \end{align}
  Inequality \eqref{200} holds by \eqref{18}, and \eqref{201} is due to Proposition \ref{59}. In \eqref{004} we used \eqref{12}. Next, we have
    \Bdef{14}\Bdef{15}
    \begin{align}
      \B{12} & =        \sum_{n=1}^{N} \lt \sum_{j=k_{(n\!-\!1\!)}}^{k_n\!-1} \Tj \Uj \rt^\rp \Theta^{k_{(\!n\!+\!1\!)}\!-1} \hspace{-10pt} \sup_{z\in\overline{\Delta}_{(\!k_{(\!n\!+\!1\!)}\!-1\!)}} \hspace{-7pt} \Uq(t_{(\!k_{(\!n\!+\!1\!)}\!-1\!)},z) \lt \int_z^\infty \vp(s)\ds \rt^\rpp  \nonumber \\
             & \lesssim \sum_{n=1}^{N} \lt \sum_{j=k_{(n\!-\!1\!)}}^{k_n\!-1} \Tj \Uj \rt^\rp \Theta^{k_{(\!n\!+\!1\!)}\!-1} \hspace{-10pt} \sup_{z\in\overline{\Delta}_{(\!k_{(\!n\!+\!1\!)}\!-1\!)}} \hspace{-7pt} \Uq(t_{(\!k_{(\!n\!+\!1\!)}\!-1\!)},z) \lt \int_z^{t_{k_{(\!n\!+\!1\!)}}} \vp(s)\ds \rt^\rpp  \nonumber \\
             & \quad +  \sum_{n=1}^{N\!-1} \lt \sum_{j=k_{(n\!-\!1\!)}}^{k_n\!-1} \Tj \Uj \rt^\rp \Theta^{k_{(\!n\!+\!1\!)}\!-1} \Uq(\Delta_{(\!k_{(\!n\!+\!1\!)}\!-1\!)}) \lt \int_{t_{k_{(\!n\!+\!1\!)}}}^\infty \vp(s)\ds \rt^\rpp  \nonumber \\
             & \lesssim \sum_{n=1}^{N} \lt \sum_{j=k_{(n\!-\!1\!)}}^{k_n\!-1} \Tj \Uj \rt^\rp \Theta^{k_{(\!n\!+\!1\!)}\!-1} \hspace{-10pt} \sup_{z\in\overline{\Delta}_{(\!k_{(\!n\!+\!1\!)}\!-1\!)}} \hspace{-7pt} \Uq(t_{(\!k_{(\!n\!+\!1\!)}\!-1\!)},z) \lt \int_z^{t_{k_{(\!n\!+\!1\!)}}} \vp(s)\ds \rt^\rpp  \label{207} \\
             & \quad +  \sum_{n=1}^{N\!-1} \lt \sum_{j=k_n}^{k_{(n\!+\!1\!)}-\!1} \Tj \Uj \rt^\rq \lt \int_{t_{k_{(\!n\!+\!1\!)}}}^\infty \vp(s)\ds \rt^\rpp \nonumber  \\
             & =:       \B{14}+\B{15}. \nonumber 
    \end{align}
  Step \eqref{207} is based on \eqref{12}. For each $n\in\{1,\ldots,N\}$ there exists a~point $z_{(\!n\!+\!1\!)}\in\overline{\Delta}_{(\!k_{(\!n\!+\!1\!)}\!-\!1\!)}$ such that 
    \begin{equation}\label{202}
      \sup_{z\in\overline{\Delta}_{(\!k_{(\!n\!+\!1\!)}\!-\!1\!)}} \hspace{-7pt} \Uq(t_{(\!k_{(\!n\!+\!1\!)}\!-\!1\!)},z) \lt \int_z^{t_{k_{(\!n\!+\!1\!)}}} \!\! \vp\!\!(s)\ds \rt^{\!\rpp} \!\!\! \le 2 \Uq(t_{(\!k_{(\!n\!+\!1\!)}\!-\!1\!)},z_{(\!n\!+\!1\!)}) \lt \int_{z_{(\!n\!+\!1\!)}}^{t_{k_{(\!n\!+\!1\!)}}} \!\! \vp\!\!(s)\ds \rt^{\!\rpp}\!\!.
    \end{equation}
  Define also $z_{(-1)}:=0$ and $z_{(\!N\!+2\!)}:=\infty$. One then gets
    \begin{align}
      \B{14} & =        \sum_{n=1}^{N} \lt \sum_{j=k_{(\!n\!-\!1\!)}}^{k_n\!-1} \Tj \Uj \rt^\rp \Theta^{k_{(\!n\!+\!1\!)}\!-1} \hspace{-10pt} \sup_{z\in\overline{\Delta}_{(\!k_{(\!n\!+\!1\!)}\!-1\!)}} \hspace{-7pt} \Uq(t_{(\!k_{(\!n\!+\!1\!)}\!-1\!)},z) \lt \int_z^{t_{k_{(\!n\!+\!1\!)}}} \! \vp(s)\ds \rt^\rpp  \nonumber \\
             & \lesssim \sum_{n=1}^{N} \lt \sum_{j=k_{(\!n\!-\!1\!)}}^{k_n\!-1} \Tj \Uj \rt^\rp \Theta^{k_{(\!n\!+\!1\!)}\!-1} \Uq(t_{(\!k_{(\!n\!+\!1\!)}\!-1\!)},z_{(\!n\!+\!1\!)}) \lt \int_{z_{(\!n\!+\!1\!)}}^{t_{k_{(\!n\!+\!1\!)}}} \! \vp(s)\ds \rt^\rpp  \label{203} \\
             & \lesssim \sum_{n=1}^{N} \lt \sum_{j=k_{(\!n\!-\!1\!)}}^{k_n\!-1} \Tj \Uj \rt^\rp \!\!\! \int_{\Delta_{(\!k_{(\!n\!+\!1\!)}\!-2\!)}} \hspace{-15pt} w(t) \dt\ \Uq(t_{(\!k_{(\!n\!+\!1\!)}\!-1\!)},z_{(\!n\!+\!1\!)}) \lt \int_{z_{(\!n\!+\!1\!)}}^{t_{k_{(\!n\!+\!1\!)}}} \! \vp\!(s)\ds \rt^\rpp  \label{204} \\
             & \le      \sum_{n=1}^{N} \lt \sum_{j=k_{(\!n\!-\!1\!)}}^{k_n\!-1} \Tj \Uj \rt^\rp \int_{t_{(\!k_{(\!n\!+\!1\!)}\!-2\!)}}^{z_{(\!n\!+\!1\!)}} w(t) \Uq(t,z_{(\!n\!+\!1\!)}) \dt\, \lt \int_{z_{(\!n\!+\!1\!)}}^{t_{k_{(\!n\!+\!1\!)}}} \! \vp(s)\ds \rt^\rpp  \nonumber \\
             & \le      \sum_{n=1}^{N} \lt \int_{t_{k_{(\!n\!-\!2\!)}}}^{t_{k_n}} \hspace{-10pt} w(t) U^q(t,t_{k_n}) \dt \rt^\rp \int_{t_{(\!k_{(\!n\!+\!1\!)}\!-2\!)}}^{z_{(\!n\!+\!1\!)}} \hspace{-10pt} w(t) \Uq(t,z_{(\!n\!+\!1\!)}) \dt \, \lt \int_{z_{(\!n\!+\!1\!)}}^{t_{k_{(\!n\!+\!1\!)}}} \! \vp\!(s)\ds \rt^\rpp  \label{205} \\
             & \le      \sum_{n=1}^{N} \lt \int_{z_{(\!n\!-\!2\!)}}^{z_{(\!n\!+\!1\!)}} \hspace{-2pt} w(t) U^q(t,{z_{(\!n\!+\!1\!)}}) \dt \rt^\rq \lt \int_{z_{(\!n\!+\!1\!)}}^{z_{(\!n\!+\!2\!)}} \! \vp(s)\ds \rt^\rpp  \label{206} \\
             & =        \sum_{i=0}^3  \sum_{\substack{1\le n\le N\\ n \mod 4 = i}} \lt \int_{z_{(\!n\!-\!2\!)}}^{z_{(\!n\!+\!1\!)}} \hspace{-2pt} w(t) U^q(t,{z_{(\!n\!+\!1\!)}}) \dt \rt^\rq \lt \int_{z_{(\!n\!+\!1\!)}}^{z_{(\!n\!+\!2\!)}} \! \vp(s)\ds \rt^\rpp  \nonumber \\
             & \lesssim D_2. \nonumber
    \end{align}
  We used \eqref{202} in \eqref{203}, and \eqref{18} in \eqref{204}. Estimate \eqref{205} follows from \eqref{16}. To get \eqref{206}, we used the relation $z_{(\!n\!-\!1\!)} \le t_{k_{(\!n\!-\!1\!)}} \le t_{(\!k_{(\!n\!+\!1\!)}\!-2\!)}$ which holds for all relevant indices $n$. The second inequality $t_{k_{(\!n\!-\!1\!)}} \le t_{(\!k_{(\!n\!+\!1\!)}\!-2\!)}$ follows from \eqref{10}.

  Concerning $\B{15}$, we obtain
    \begin{align}
      \B{15} & =        \sum_{n=1}^{N\!-1} \lt \sum_{j=k_n}^{k_{(n\!+\!1\!)}-\!1} \Tj \Uj \rt^\rq \lt \int_{t_{k_{(\!n\!+\!1\!)}}}^\infty \vp(s)\ds \rt^\rpp  \nonumber \\
             & =        \sum_{n=1}^{N\!-1} \lt \sum_{j=k_n}^{k_{(n\!+\!1\!)}-\!1} \Tj \Uj \rt^\rq \lt \sum_{i=n\!+\!1}^{N\!-1} \int_{t_{k_i}}^{t_{k_{(\!i\!+\!1\!)}}} \! \vp(s)\ds \rt^\rpp  \nonumber \\
             & \lesssim \sum_{n=1}^{N\!-1} \lt \sum_{j=k_n}^{k_{(n\!+\!1\!)}-\!1} \Tj \Uj \rt^\rq \lt \int_{t_{k_{(\!n\!+\!1\!)}}}^{t_{k_{(\!n\!+\!2\!)}}} \! \vp(s)\ds \rt^\rpp  \label{208} \\
             & \lesssim \sum_{n=1}^{N\!-1} \lt \int_{t_{k_{(\!n\!-\!1\!)}}}^{t_{k_{(\!n\!+\!1\!)}}} \! w(t) U^q(t,t_{k_n}) \dt \rt^\rq \lt \int_{t_{k_{(\!n\!+\!1\!)}}}^{t_{k_{(\!n\!+\!2\!)}}} \! \vp(s)\ds \rt^\rpp  \label{209} \\
             & =        \sum_{i=0}^2  \sum_{\substack{1\le n\le N\!-\!1\\ n \mod 3 = i}} \lt \int_{t_{k_{(\!n\!-\!1\!)}}}^{t_{k_{(\!n\!+\!1\!)}}} \! w(t) U^q(t,t_{k_n}) \dt \rt^\rq \lt \int_{t_{k_{(\!n\!+\!1\!)}}}^{t_{k_{(\!n\!+\!2\!)}}} \! \vp(s)\ds \rt^\rpp \nonumber \\
             & \lesssim D_2. \nonumber
    \end{align}
  Proposition \ref{3} together with \eqref{12} yields \eqref{208}. Estimate \eqref{209} follows from \eqref{16}. We have proved
    \[
      \B{12} \lesssim \B{14}+\B{15} \lesssim D_2.
    \]
  We proceed with the term $\B{13}$.
    \Bdef{16}\Bdef{17}
    \begin{align}
      \B{13} & =        \sum_{n=1}^{N\!-\!1} \lt \sum_{j=k_n}^{k_{(\!n\!+\!1\!)}\!-\!1} \hspace{-6pt} \Tj \Uj \rt^\rp \!\! \Theta^{k_{(\!n\!+\!1\!)}\!-1} \hspace{-6pt} \sup_{z\in[t_{k_{(\!n\!+\!1\!)}},\infty)} \hspace{-5pt} \Uq(t_{k_{(\!n\!+\!1\!)}},z) \lt \int_z^\infty \vp(s)\ds \rt^\rpp \nonumber \\
             & \le      \sum_{n=1}^{N\!-\!1} \lt \sum_{j=k_n}^{k_{(\!n\!+\!1\!)}\!-\!1} \hspace{-6pt} \Tj \Uj \rt^{\!\rp} \!\! \Theta^{k_{(\!n\!+\!1\!)}} \hspace{-6pt} \sup_{i\in\{n\!+\!1,\ldots,N\}} \sup_{z\in[t_{k_i}\!,\, t_{k_{(\!i\!+\!1\!)}}]} \hspace{-5pt} \Uq(t_{k_{(\!n\!+\!1\!)}},z) \lt \int_z^\infty \hspace{-5pt} \vp\!(s)\ds \rt^\rpp \nonumber \\
             & \lesssim \sum_{n=1}^{N\!-\!1} \lt \sum_{j=k_n}^{k_{(\!n\!+\!1\!)}\!-\!1} \hspace{-6pt} \Tj \Uj \rt^{\!\rp} \!\! \Theta^{k_{(\!n\!+\!1\!)}} \hspace{-6pt} \sup_{i\in\{n\!+\!1,\ldots,N\}} \sup_{z\in[t_{k_i}\!,\, t_{k_{(\!i\!+\!1\!)}}]} \hspace{-5pt} \Uq(t_{k_i},z) \lt \int_z^\infty \hspace{-5pt} \vp\!(s)\ds \rt^\rpp \nonumber \\
             & \quad +  \sum_{n=1}^{N\!-\!2} \lt \sum_{j=k_n}^{k_{(\!n\!+\!1\!)}\!-\!1} \hspace{-6pt} \Tj \Uj \rt^{\!\rp} \!\! \Theta^{k_{(\!n\!+\!1\!)}} \hspace{-6pt} \sup_{i\in\{n\!+\!2,\ldots,N\}} \hspace{-5pt} \Uq(t_{k_{(\!n\!+\!1\!)}},t_{k_i}) \lt \int_{t_{k_i}}^\infty \hspace{-5pt} \vp\!(s)\ds \rt^\rpp \nonumber \\                            
             & =:       \B{16}+\B{17}. \nonumber
    \end{align}
  For $\B{16}$ we have
    \Bdef{18}\Bdef{19}
    \begin{align}
      \B{16} & =        \sum_{n=1}^{N\!-\!1} \lt \sum_{j=k_n}^{k_{(\!n\!+\!1\!)}\!-\!1} \hspace{-6pt} \Tj \Uj \rt^{\!\rp}  \Theta^{k_{(\!n\!+\!1\!)}} \hspace{-6pt} \sup_{i\in\{n\!+\!1,\ldots,N\}} \sup_{z\in[t_{k_i}\!,\, t_{k_{(\!i\!+\!1\!)}}]} \hspace{-5pt} \Uq(t_{k_i},z) \lt \int_z^\infty \hspace{-5pt} \vp\!(s)\ds \rt^\rpp \nonumber \\  
             & \lesssim \sum_{n=1}^{N\!-\!1} \lt \sum_{j=k_n}^{k_{(\!n\!+\!1\!)}\!-\!1} \hspace{-6pt} \Tj \Uj \rt^{\!\rp}  \Theta^{k_{(\!n\!+\!1\!)}} \hspace{-6pt} \sup_{z\in[t_{k_{(\!n\!+\!1\!)}},t_{k_{(\!n\!+\!2\!)}} ]} \hspace{-5pt} \Uq(t_{k_{(\!n\!+\!1\!)}},z) \lt \int_z^\infty \hspace{-5pt} \vp\!(s)\ds \rt^\rpp \label{210} \\
             & \lesssim \sum_{n=1}^{N\!-\!1} \lt \sum_{j=k_n}^{k_{(\!n\!+\!1\!)}\!-\!1} \hspace{-6pt} \Tj \Uj \rt^{\!\rp}  \Theta^{k_{(\!n\!+\!1\!)}} \hspace{-6pt} \sup_{z\in[t_{k_{(\!n\!+\!1\!)}},t_{k_{(\!n\!+\!2\!)}} )} \hspace{-5pt} \Uq(t_{k_{(\!n\!+\!1\!)}},z) \lt \int_z^{t_{k_{(\!n\!+\!2\!)}}}  \vp\!(s)\ds \rt^\rpp \nonumber \\
             & \quad +  \sum_{n=1}^{N\!-\!2} \lt \sum_{j=k_n}^{k_{(\!n\!+\!1\!)}\!-\!1} \hspace{-6pt} \Tj \Uj \rt^{\!\rp}  \Theta^{k_{(\!n\!+\!1\!)}} \Uq(t_{k_{(\!n\!+\!1\!)}},t_{k_{(\!n\!+\!2\!)}}) \lt \int_{t_{k_{(\!n\!+\!2\!)}}}^\infty  \vp\!(s)\ds \rt^\rpp \nonumber \\
             & =:       \B{18}+\B{19}. \nonumber
    \end{align}
  In step \eqref{210} we used Proposition \ref{3}, considering also \eqref{12}. For each $n\in\{0,\ldots,N\!-\!1\}$ there exists a~point $y_{(\!n\!+\!1\!)}\in[t_{k_{(\!n\!+\!1\!)}},t_{k_{(\!n\!+\!2\!)}} ]$ such that 
    \begin{equation}\label{211}
      \sup_{z\in[t_{k_{(\!n\!+\!1\!)}},t_{k_{(\!n\!+\!2\!)}} ]} \hspace{-12pt} \Uq(t_{k_{(\!n\!+\!1\!)}},z) \lt \int_z^{t_{k_{(\!n\!+\!2\!)}}} \!\! \vp\!(s)\ds \rt^\rpp \!\!\! \le 2 \Uq(t_{k_{(\!n\!+\!1\!)}},y_{(\!n\!+\!1\!)}) \lt \int_{y_{(\!n\!+\!1\!)}}^{t_{k_{(\!n\!+\!2\!)}}} \!\! \vp\!(s)\ds \rt^\rpp \!\!\!\!.
    \end{equation}
  Define also $y_{(-1)}:=0$ and $y_{(\!N\!+2\!)}:=\infty$.
    \begin{align}
      \B{18} & =        \sum_{n=1}^{N\!-\!1} \lt \sum_{j=k_n}^{k_{(\!n\!+\!1\!)}\!-\!1} \hspace{-6pt} \Tj \Uj \rt^{\!\rp}  \Theta^{k_{(\!n\!+\!1\!)}} \hspace{-6pt} \sup_{z\in[t_{k_{(\!n\!+\!1\!)}},t_{k_{(\!n\!+\!2\!)}} ]} \hspace{-5pt} \Uq(t_{k_{(\!n\!+\!1\!)}},z) \lt \int_z^{t_{k_{(\!n\!+\!2\!)}}}  \vp\!(s)\ds \rt^\rpp \nonumber \\
             & \lesssim \sum_{n=1}^{N\!-\!1} \lt \sum_{j=k_n}^{k_{(\!n\!+\!1\!)}\!-\!1} \hspace{-6pt} \Tj \Uj \rt^{\!\rp}  \Theta^{k_{(\!n\!+\!1\!)}} \Uq(t_{k_{(\!n\!+\!1\!)}},y_{(\!n\!+\!1\!)}) \lt \int_{y_{(\!n\!+\!1\!)}}^{t_{k_{(\!n\!+\!2\!)}}} \! \vp\!(s)\ds \rt^\rpp \label{212} \\
             & \lesssim \sum_{n=1}^{N\!-\!1} \lt \sum_{j=k_n}^{k_{(\!n\!+\!1\!)}\!-\!1} \hspace{-6pt} \Tj \Uj \rt^{\!\rp} \!\! \int_{\Delta_{(\!k_{\!(\!n\!+\!1\!)}\!-\!1\!)}} \hspace{-12pt} w(t)\dt\ \Uq(t_{k_{(\!n\!+\!1\!)}},y_{(\!n\!+\!1\!)}) \lt \int_{y_{(\!n\!+\!1\!)}}^{t_{k_{(\!n\!+\!2\!)}}} \! \vp\!(s)\ds \rt^\rpp \label{213} \\
             & \le      \sum_{n=1}^{N\!-\!1} \lt \sum_{j=k_n}^{k_{(\!n\!+\!1\!)}\!-\!1} \hspace{-6pt} \Tj \Uj \rt^{\!\rp} \!\! \int_{y_{(\!n\!-\!2\!)}}^{y_{(\!n\!+\!1\!)}} \hspace{-2pt} w(t) \Uq(t,y_{(\!n\!+\!1\!)}) \dt\, \lt \int_{y_{(\!n\!+\!1\!)}}^{t_{k_{(\!n\!+\!2\!)}}} \! \vp\!(s)\ds \rt^\rpp \label{214} \\
             & \lesssim \sum_{n=1}^{N\!-\!1} \lt \int_{y_{(\!n\!-\!2\!)}}^{y_{(\!n\!+\!1\!)}} \hspace{-2pt} w(t) \Uq(t,y_{(\!n\!+\!1\!)}) \dt \rt^{\rq} \lt \int_{y_{(\!n\!+\!1\!)}}^{y_{(\!n\!+\!2\!)}} \! \vp\!(s)\ds \rt^\rpp \label{215} \\
             & =        \sum_{i=0}^3  \sum_{\substack{1\le n\le N\!-\!1\\ n \mod 4 = i}} \lt \int_{y_{(\!n\!-\!2\!)}}^{y_{(\!n\!+\!1\!)}} \hspace{-2pt} w(t) \Uq(t,y_{(\!n\!+\!1\!)}) \dt \rt^{\rq} \lt \int_{y_{(\!n\!+\!1\!)}}^{y_{(\!n\!+\!2\!)}} \! \vp\!(s)\ds \rt^\rpp \nonumber \\
             & \lesssim D_2. \nonumber
    \end{align}
  In \eqref{212} we used \eqref{211}. Inequality \eqref{213} follows from \eqref{18}. To get \eqref{214}, we used the inequality $y_{(\!n\!-\!2\!)} \le t_{k_{(\!n\!-\!1\!)}} \le t_{(\!k_{\!(\!n\!+\!1\!)}\!-\!1\!)}$ (cf.~\eqref{10}) satisfied for all relevant indices $n$. This inequality, together with \eqref{16}, also yields \eqref{215}. 

  Next, the term $\B{19}$ is treated as follows.
    \begin{align}
      \B{19} & =        \sum_{n=1}^{N\!-\!2} \lt \sum_{j=k_n}^{k_{(\!n\!+\!1\!)}\!-\!1} \hspace{-2pt} \Tj \Uj \rt^{\!\rp}  \Theta^{k_{(\!n\!+\!1\!)}} \Uq(t_{k_{(\!n\!+\!1\!)}},t_{k_{(\!n\!+\!2\!)}}) \lt \int_{t_{k_{(\!n\!+\!2\!)}}}^\infty  \vp\!(s)\ds \rt^\rpp \nonumber \\
             & \lesssim \sum_{n=1}^{N\!-\!2} \lt \sum_{j=k_{(\!n\!+\!1\!)}}^{k_{(\!n\!+\!2\!)}\!-\!1} \hspace{-2pt} \Tj \Uj \rt^{\!\rp}  \Theta^{k_{(\!n\!+\!1\!)}} \Uq(t_{k_{(\!n\!+\!1\!)}},t_{k_{(\!n\!+\!2\!)}}) \lt \int_{t_{k_{(\!n\!+\!2\!)}}}^\infty  \vp\!(s)\ds \rt^\rpp \label{216} \\
             & \le      \sum_{n=1}^{N\!-\!2} \lt \sum_{j=k_{(\!n\!+\!1\!)}}^{k_{(\!n\!+\!2\!)}\!-\!1} \hspace{-2pt} \Tj \Uj \rt^{\!\rp}  \sum_{j=k_{(\!n\!+\!1\!)}}^{k_{(\!n\!+\!2\!)}\!-\!1} \hspace{-2pt} \Tj \Uq(t_j,t_{k_{(\!n\!+\!2\!)}}) \lt \int_{t_{k_{(\!n\!+\!2\!)}}}^\infty  \vp\!(s)\ds \rt^\rpp \nonumber \\
             & \lesssim \sum_{n=1}^{N\!-\!2} \lt \sum_{j=k_{(\!n\!+\!1\!)}}^{k_{(\!n\!+\!2\!)}\!-\!1} \hspace{-2pt} \Tj \Uj \rt^{\!\rq}  \lt \int_{t_{k_{(\!n\!+\!2\!)}}}^\infty  \vp\!(s)\ds \rt^\rpp \label{217} \\
             & \le      \B{15} \nonumber\\
             & \lesssim D_2. \nonumber
    \end{align}
  Inequality \eqref{216} is obtained by using \eqref{12}, and inequality \eqref{217} by Proposition \ref{4}. The final estimate $\B{15}\lesssim D_2$ was already proved before. We have obtained
    \[
      \B{16} \lesssim \B{18}+\B{19} \lesssim D_2.
    \]
    
  Let us now return to the term $\B{17}$. It holds
    \begin{align}
      \B{17} & =        \sum_{n=1}^{N\!-\!2} \lt \sum_{j=k_n}^{k_{(\!n\!+\!1\!)}\!-\!1} \hspace{-6pt} \Tj \Uj \rt^{\!\rp} \!\! \Theta^{k_{(\!n\!+\!1\!)}} \hspace{-6pt} \sup_{i\in\{n\!+\!2,\ldots,N\}} \hspace{-5pt} \Uq(t_{k_{(\!n\!+\!1\!)}},t_{k_i}) \lt \int_{t_{k_i}}^\infty \hspace{-5pt} \vp\!(s)\ds \rt^\rpp \nonumber \\                            
             & \le      \sum_{n=1}^{N\!-\!2} \lt \sum_{j=k_n}^{k_{(\!n\!+\!1\!)}\!-\!1} \hspace{-6pt} \Tj \Uj \rt^{\!\rp} \hspace{-4pt} \sup_{i\in\{n\!+\!2,\ldots,N\}} \sum_{j=\mu}^{k_i\!-\!1} \Tj \Uq(t_j,t_{k_i}) \lt \int_{t_{k_i}}^\infty \hspace{-5pt} \vp\!(s)\ds \rt^\rpp \nonumber \\                            
             & \lesssim \sum_{n=1}^{N\!-\!2} \lt \sum_{j=k_n}^{k_{(\!n\!+\!1\!)}\!-\!1} \hspace{-6pt} \Tj \Uj \rt^{\!\rp} \hspace{-4pt} \sup_{i\in\{n\!+\!2,\ldots,N\}} \sum_{j=\mu}^{k_i\!-\!1} \Tj \Uj \lt \int_{t_{k_i}}^\infty \hspace{-5pt} \vp\!(s)\ds \rt^\rpp \label{218} \\                            
             & \lesssim \sum_{n=1}^{N\!-\!2} \lt \sum_{j=k_n}^{k_{(\!n\!+\!1\!)}\!-\!1} \hspace{-6pt} \Tj \Uj \rt^{\!\rp} \hspace{-4pt} \sup_{i\in\{n\!+\!2,\ldots,N\}} \sum_{j=k_{\!(\!i\!-\!1\!)}}^{k_i\!-\!1} \hspace{-6pt} \Tj \Uj \lt \int_{t_{k_i}}^\infty \hspace{-5pt} \vp\!(s)\ds \rt^\rpp \label{219} \\                            
             & =        \sum_{n=1}^{N\!-\!2} \lt \sum_{j=k_n}^{k_{(\!n\!+\!1\!)}\!-\!1} \hspace{-6pt} \Tj \Uj \rt^{\!\rp} \hspace{-4pt} \sup_{i\in\{n\!+\!2,\ldots,N\}} \sum_{j=k_{\!(\!i\!-\!1\!)}}^{k_i\!-\!1} \hspace{-6pt} \Tj \Uj \lt \sum_{m=i}^{N} \int_{t_{k_m}}^{t_{k_{(\!m\!+\!1\!)}}} \vp\!(s)\ds \rt^\rpp \nonumber \\                            
             & \lesssim \sum_{n=1}^{N\!-\!2} \lt \sum_{j=k_n}^{k_{(\!n\!+\!1\!)}\!-\!1} \hspace{-6pt} \Tj \Uj \rt^{\!\rp} \hspace{-4pt} \sup_{i\in\{n\!+\!2,\ldots,N\}} \sum_{j=k_{\!(\!i\!-\!1\!)}}^{k_i\!-\!1} \hspace{-6pt} \Tj \Uj \lt \int_{t_{k_i}}^{t_{k_{(\!i\!+\!1\!)}}} \vp\!(s)\ds \rt^\rpp \label{220} \\                            
             & \lesssim \sum_{n=1}^{N\!-\!2} \lt \sum_{j=k_n}^{k_{(\!n\!+\!1\!)}\!-\!1} \hspace{-6pt} \Tj \Uj \rt^{\!\rp} \sum_{j=k_{\!(\!n\!+\!1\!)}}^{k_{\!(\!n\!+\!2\!)}\!-\!1} \hspace{-6pt} \Tj \Uj \lt \int_{t_{k_{(\!n\!+\!2\!)}}}^{t_{k_{(\!n\!+\!3\!)}}} \vp\!(s)\ds \rt^\rpp \label{221} \\                            
             & \lesssim \sum_{n=1}^{N\!-\!2} \lt \sum_{j=k_{\!(\!n\!+\!1\!)}}^{k_{\!(\!n\!+\!2\!)}\!-\!1} \hspace{-6pt} \Tj \Uj \rt^\rq \lt \int_{t_{k_{(\!n\!+\!2\!)}}}^{t_{k_{(\!n\!+\!3\!)}}} \vp\!(s)\ds \rt^\rpp \label{222} \\                            
             & \lesssim \sum_{n=1}^{N\!-\!2} \lt \int_{t_{k_n}}^{t_{k_{(\!n\!+\!2\!)}}} \hspace{-2pt} w(t) \Uq(t,{t_{k_{(\!n\!+\!2\!)}}}) \dt \rt^{\rq} \lt \int_{t_{k_{(\!n\!+\!2\!)}}}^{t_{k_{(\!n\!+\!3\!)}}} \vp\!(s)\ds \rt^\rpp \label{223} \\                            
             & =        \sum_{i=0}^2  \sum_{\substack{1\le n\le N\!-\!2\\ n \mod 3 = i}} \lt \int_{t_{k_n}}^{t_{k_{(\!n\!+\!2\!)}}} \hspace{-2pt} w(t) \Uq(t,{t_{k_{(\!n\!+\!2\!)}}}) \dt \rt^{\rq} \lt \int_{t_{k_{(\!n\!+\!2\!)}}}^{t_{k_{(\!n\!+\!3\!)}}} \vp\!(s)\ds \rt^\rpp \nonumber \\
             & \lesssim D_2. \nonumber
    \end{align}
  Inequality \eqref{218} follows from Proposition \ref{4}, and inequality \eqref{219} from \eqref{13}. To get \eqref{220}, one uses Proposition \ref{89}, considering also \eqref{12}. Proposition \ref{3}, again with \eqref{12}, yields \eqref{221}. Step \eqref{222} follows from \eqref{12}. In \eqref{223} we applied \eqref{16}. Having proved $\B{17} \lesssim D_2$, we may now complete several more estimates, namely
    \[
      \B{13} \lesssim \B{16} + \B{17} \lesssim D_2,
    \]
  which, combined with the earlier results, gives
    \[
      \B{10} \lesssim \B{12} + \B{13} \lesssim D_2.
    \]
  
  The next untreated expression is $\B{11}$. It is estimated in the following way.
    \Bdef{20}\Bdef{21}\Bdef{22}
    \begin{align}
      \B{11} & =         \sum_{n=0}^{N} \int_{\Delta_{(\!k_{(\!n\!+\!1\!)}\!-1\!)}} \hspace{-25pt} \Theta^{\rp (\!k_{(\!n\!+\!1\!)}\!-1\!)} U^{\frac{rq}p} (t_{(\!k_{(\!n\!+\!1\!)}\!-1\!)},t) \, w(t) \sup_{z\in[t,\infty)} \! \Uq(t,z) \lt \int_z^\infty \!\!\! \vp\!(s)\ds \rt^\rpp \dt \nonumber \\
             & \lesssim  \sum_{n=0}^{N} \Theta^{\rp (\!k_{(\!n\!+\!1\!)}\!-1\!)}  \!\! \int_{\Delta_{(\!k_{(\!n\!+\!1\!)}\!-1\!)}} \hspace{-25pt} U^{\frac{rq}p} (t_{(\!k_{(\!n\!+\!1\!)}\!-1\!)},t) \, w(t) \hspace{-6pt} \sup_{z\in[t,t_{k_{(\!n\!+\!1\!)}}]} \hspace{-8pt} \Uq(t,z) \lt \int_z^\infty \!\!\! \vp\!(s)\ds \rt^\rpp \dt \label{001} \\
             & \quad +   \sum_{n=0}^{N\!-\!1} \! \Theta^{\rp (\!k_{(\!n\!+\!1\!)}\!-1\!)} \hspace{-6pt} \int_{\Delta_{(\!k_{(\!n\!+\!1\!)}\!-1\!)}} \hspace{-25pt} U^{\frac{rq}p}\!(t_{(\!k_{(\!n\!+\!1\!)}\!-1\!)},t) \, w(t) \dt\hspace{-8pt} \sup_{z\in[t_{k_{(\!n\!+\!1\!)}},\infty)} \hspace{-12pt} \Uq(t_{k_{(\!n\!+\!1\!)}},z) \lt \int_z^\infty \!\!\!\! \vp\!\!(s)\ds \rt^\rpp \nonumber \\
             & \lesssim  \sum_{n=0}^{N} \! \Theta^{\rp (\!k_{(\!n\!+\!1\!)}\!-1\!)}  \hspace{-6pt} \int_{\Delta_{(\!k_{(\!n\!+\!1\!)}\!-1\!)}} \hspace{-25pt} U^{\frac{rq}p} (t_{(\!k_{(\!n\!+\!1\!)}\!-1\!)},t) \, w(t) \hspace{-6pt} \sup_{z\in[t,t_{k_{(\!n\!+\!1\!)}}]} \hspace{-8pt} \Uq(t,z) \lt \int_z^{t_{k_{(\!n\!+\!1\!)}}} \! \vp\!(s)\ds \rt^\rpp \!\!\dt \nonumber \\
             & \quad +   \sum_{n=0}^{N} \! \Theta^{\rp (\!k_{(\!n\!+\!1\!)}\!-1\!)}  \hspace{-6pt} \int_{\Delta_{(\!k_{(\!n\!+\!1\!)}\!-1\!)}} \hspace{-25pt} U^{\frac{rq}p} (t_{(\!k_{(\!n\!+\!1\!)}\!-1\!)},t) \, w(t) \Uq(t,t_{k_{(\!n\!+\!1\!)}}) \lt \int_{t_{k_{(\!n\!+\!1\!)}}}^\infty \!\!\! \vp\!(s)\ds \rt^\rpp \!\! \dt \nonumber \\
             & \quad +   \sum_{n=0}^{N\!-\!1} \! \Theta^{\rp (\!k_{(\!n\!+\!1\!)}\!-1\!)} \hspace{-6pt} \int_{\Delta_{(\!k_{(\!n\!+\!1\!)}\!-1\!)}} \hspace{-25pt} U^{\frac{rq}p}\! (t_{(\!k_{(\!n\!+\!1\!)}\!-1\!)},t) \, w(t) \dt\hspace{-8pt} \sup_{z\in[t_{k_{(\!n\!+\!1\!)}},\infty)} \hspace{-12pt} \Uq(t_{k_{(\!n\!+\!1\!)}},z) \lt \int_z^\infty \!\!\!\! \vp\!\!(s)\ds \rt^\rpp \nonumber \\
             & =:        \B{20} + \B{21} + \B{22}. \nonumber
    \end{align}
  Inequality \eqref{001} follows from Proposition \ref{59}. Define $t_{(\!k_{(\!N\!+\!2\!)}-\!1\!)} := \infty$. Then we have
    \begin{align}
      \B{20} & =         \sum_{n=0}^{N} \Theta^{\rp (\!k_{(\!n\!+\!1\!)}\!-1\!)}  \!\!\! \int_{\Delta_{(\!k_{(\!n\!+\!1\!)}\!-1\!)}} \hspace{-25pt} U^{\frac{rq}p} (t_{(\!k_{(\!n\!+\!1\!)}\!-1\!)},t) \, w(t) \hspace{-6pt} \sup_{z\in[t,t_{k_{(\!n\!+\!1\!)}}]} \hspace{-8pt} \Uq(t,z) \lt \int_z^{t_{k_{(\!n\!+\!1\!)}}} \! \vp\!(s)\ds \rt^\rpp \!\!\!\dt \nonumber \\
             & \le       \sum_{n=0}^{N} \Theta^{\rp (\!k_{(\!n\!+\!1\!)}\!-1\!)}  \!\! \int_{\Delta_{(\!k_{(\!n\!+\!1\!)}\!-1\!)}} \hspace{-15pt} w(t) \dt \sup_{z\in\overline{\Delta}_{(\!k_{(\!n\!+\!1\!)}\!-1\!)}} \hspace{-4pt} \Ur(t_{(\!k_{(\!n\!+\!1\!)}\!-1\!)},z) \lt \int_z^{t_{k_{(\!n\!+\!1\!)}}} \! \vp\!(s)\ds \rt^\rpp \nonumber \\
             & \lesssim  \sum_{n=0}^{N} \Theta^{\rq (\!k_{(\!n\!+\!1\!)}\!-1\!)} \hspace{-5pt} \sup_{z\in{\overline{\Delta}_{(\!k_{(\!n\!+\!1\!)}\!-1\!)}}} \hspace{-5pt} \Ur(t_{(\!k_{(\!n\!+\!1\!)}\!-1\!)},z) \lt \int_z^{t_{k_{(\!n\!+\!1\!)}}} \vp\!(s)\ds \rt^\rpp \label{002} \\
             & \le       \sum_{n=0}^{N} \Theta^{\rq (\!k_{(\!n\!+\!1\!)}\!-1\!)} \lt \int_{\Delta_{(\!k_{(\!n\!+\!1\!)}\!-1\!)}} \hspace{-10pt} \Upp(t_{(\!k_{(\!n\!+\!1\!)}\!-1\!)},s)  \vp\!(s)\ds \rt^\rpp \nonumber \\
             & \lesssim  \sum_{n=0}^{N} \lt \int_{t_{(\!k_n\!-\!1\!)}}^{t_{(\!k_{(\!n\!+\!1\!)}\!-1\!)}} w(t) \dt \rt^\rq \lt \int_{t_{(\!k_{(\!n\!+\!1\!)}\!-1\!)}}^{t_{(\!k_{(\!n\!+\!2\!)}-\!1\!)}}  \Upp(t_{(\!k_{(\!n\!+\!1\!)}\!-1\!)},s)  \vp\!(s)\ds \rt^\rpp \label{226} \\
             & \le       D_1. \nonumber
    \end{align}
  Step \eqref{002} follows from \eqref{18}. In \eqref{226} we used \eqref{18} and the inequalities $t_{(\!k_n\!-\!1\!)} \le t_{(\!k_{(\!n\!+\!1\!)}\!-2\!)}$ and $t_{k_{(\!n\!+\!1\!)}}\le t_{(\!k_{(\!n\!+\!2\!)}\!-1\!)}$ which hold for all $n\in\{0,\ldots,N\}$ thanks to \eqref{10} and the definition of $t_{(\!k_{(\!N\!+\!2\!)}-\!1\!)}$. 
  
  We continue with the term $\B{21}$, for which we get
    \begin{align}
      \B{21} & =         \sum_{n=0}^{N} \Theta^{\rp (\!k_{(\!n\!+\!1\!)}\!-1\!)}  \!\! \int_{\Delta_{(\!k_{(\!n\!+\!1\!)}\!-1\!)}} \hspace{-25pt} U^{\frac{rq}p} (t_{(\!k_{(\!n\!+\!1\!)}\!-1\!)},t) \, w(t) \Uq(t,t_{k_{(\!n\!+\!1\!)}}) \lt \int_{t_{k_{(\!n\!+\!1\!)}}}^\infty \!\!\! \vp\!(s)\ds \rt^\rpp \!\! \dt \nonumber \\
             & \le       \sum_{n=0}^{N\!-\!1} \Theta^{\rp (\!k_{(\!n\!+\!1\!)}\!-1\!)} \int_{\Delta_{(\!k_{(\!n\!+\!1\!)}\!-1\!)}} \hspace{-15pt} w(t) \dt\ \Ur({\Delta_{(\!k_{(\!n\!+\!1\!)}\!-1\!)}}) \lt \int_{t_{k_{(\!n\!+\!1\!)}}}^\infty \vp\!(s)\ds \rt^\rpp \nonumber \\
             & \lesssim  \sum_{n=0}^{N\!-\!1} \Theta^{\rq (\!k_{(\!n\!+\!1\!)}\!-1\!)} \Ur({\Delta_{(\!k_{(\!n\!+\!1\!)}\!-1\!)}}) \lt \int_{t_{k_{(\!n\!+\!1\!)}}}^\infty \vp\!(s)\ds \rt^\rpp \label{003} \\
             & \le       \sum_{n=0}^{N\!-\!1} \lt \sum_{j=k_n}^{k_{(\!n\!+\!1\!)}\!-1} \Tj \Uj \rt^\rq \lt \int_{t_{k_{(\!n\!+\!1\!)}}}^\infty \vp\!(s)\ds \rt^\rpp \nonumber \\
             & =         \lt \Theta^\mu \Uq(\Delta_\mu) \rt^\rq \lt \int_{t_{k_{1}}}^\infty \vp\!(s)\ds \rt^\rpp  + \B{15} \label{227} \\
             & \lesssim  \lt \int_0^{\tm} w(t)\dt\, \Uq(\Delta_\mu) \rt^\rq \lt \int_{t_{k_{1}}}^\infty \vp\!(s)\ds \rt^\rpp  + \B{15} \label{006} \\
             & \lesssim  \lt \int_0^{t_{(\!\mu\!+\!1\!)}} w(t) \Uq(t,t_{(\!\mu\!+\!1\!)}) \dt \rt^\rq \lt \int_{t_{k_{1}}}^\infty \vp\!(s)\ds \rt^\rpp  + \B{15} \nonumber \\
             & \lesssim  D_2. \label{228}
    \end{align}
  To get \eqref{003}, we made use of \eqref{18}. In \eqref{227} we used the fact 
    \begin{equation}\label{007}
      \sum_{j=k_0}^{k_{1}\!-1} \Tj \Uj = \Theta^\mu \Uq(\Delta_\mu)
    \end{equation}  
  (recall that $k_0=\mu$ and $k_1=\mu+1$). Inequality \eqref{006} is a~consequence of\eqref{18}. The final estimate \eqref{228} follows from the relation $\B{15}\lesssim D_2$ which was proved earlier. 
  
  Concerning $\B{22}$, we may write
    \begin{align}
      \B{22} & =         \sum_{n=0}^{N\!-\!1} \Theta^{\rp (\!k_{(\!n\!+\!1\!)}\!-1\!)} \!\! \int_{\Delta_{(\!k_{(\!n\!+\!1\!)}\!-1\!)}} \hspace{-25pt} U^{\frac{rq}p} (t_{(\!k_{(\!n\!+\!1\!)}\!-1\!)},t) \, w(t) \dt\hspace{-8pt} \sup_{z\in[t_{k_{(\!n\!+\!1\!)}},\infty)} \hspace{-12pt} \Uq(t_{k_{(\!n\!+\!1\!)}},z) \lt \int_z^\infty \!\!\! \vp\!(s)\ds \rt^\rpp \nonumber \\
             & \lesssim  \sum_{n=0}^{N\!-\!1} \Theta^{\rp (\!k_{(\!n\!+\!1\!)}\!-1\!)}  U^{\frac{rq}p}(\Delta_{(\!k_{(\!n\!+\!1\!)}\!-1\!)})\ \Theta^{k_{(\!n\!+\!1\!)}\!-1}\hspace{-8pt} \sup_{z\in[t_{k_{(\!n\!+\!1\!)}},\infty)} \hspace{-12pt} \Uq(t_{k_{(\!n\!+\!1\!)}},z) \lt \int_z^\infty \!\!\! \vp\!(s)\ds \rt^\rpp \label{005} \\
             & \le       \sum_{n=0}^{N\!-\!1} \lt \sum_{j=k_n}^{k_{(\!n\!+\!1\!)}\!-\!1} \hspace{-6pt} \Tj \Uj \rt^\rp \!\! \Theta^{k_{(\!n\!+\!1\!)}\!-1} \hspace{-6pt} \sup_{z\in[t_{k_{(\!n\!+\!1\!)}},\infty)} \hspace{-5pt} \Uq(t_{k_{(\!n\!+\!1\!)}},z) \lt \int_z^\infty \vp(s)\ds \rt^\rpp \nonumber \\
             & =         \Theta^{\frac{r\mu}{q}} U^\frac{rq}{p}(\Delta_\mu) \sup_{z\in[t_{k_1},\infty)} \hspace{-5pt} \Uq(t_{k_1},z) \lt \int_z^\infty \vp(s)\ds \rt^\rpp  + \B{13} \label{008} \\
             & \le       \Theta^{\frac{r\mu}{q}} \lt \int_{\tm}^\infty \Upp(\tm,s) \vp\!(s)\ds \rt^\rpp  + \B{13} \nonumber \\
             & \lesssim  \lt \int_0^{\tm} w(t)\dt \rt^\rq \lt \int_{\tm}^\infty \Upp(\tm,s) \vp\!(s)\ds \rt^\rpp  + \B{13} \label{009} \\
             & \lesssim  D_1 + D_2. \label{010}
    \end{align}  
  Step \eqref{005} follows from \eqref{18}, step \eqref{008} from \eqref{007}, and step \eqref{009} from \eqref{18}. To obtain \eqref{010}, we used the estimate $\B{13}\lesssim D_2$ which was proved earlier. We have proved
    \[
      \B{11} \lesssim \B{20} + \B{21} + \B{22} \lesssim D_1 + D_2.   
    \]
  Together with the estimate of $\B{10}$ we obtained earlier, this also yields  
    \[
      \B{8} \lesssim \B{10} + \B{11} \lesssim D_2.   
    \]  
  
  In the next part, we return to the expression $\B{9}$. It holds    
    \begin{align}
      \B{9} & =        \sum_{n\in\A} \int_{t_{k_n}}^{t_{(\!k_{(\!n\!+\!1\!)}\!-1\!)}} \! \lt \int_{\tm}^t w(x) \Uq(x,t) \dx \rt^\rp \! w(t) \sup_{z\in[t,\infty)} \Uq(t,z) \lt \int_z^\infty \!\! \vp\!(s)\ds \rt^\rpp \!\! \dt \nonumber\\
            & \lesssim \sum_{n\in\A} \lt \sum_{j=k_{(\!n\!-\!1\!)}}^{k_n\!-\!1} \hspace{-6pt} \Tj \Uj \rt^\rp \hspace{-4pt} \int_{t_{k_n}}^{t_{(\!k_{\!(\!n\!+\!1\!)}-\!1\!)}}  \! w(t) \sup_{z\in[t,\infty)} \Uq(t,z) \lt \int_z^\infty \!\! \vp\!(s)\ds \rt^\rpp \!\! \dt \label{250}\\
            & \lesssim \sum_{n\in\A} \lt \sum_{j=k_{(\!n\!-\!1\!)}}^{k_n\!-\!1} \hspace{-6pt} \Tj \Uj \rt^\rp \hspace{-4pt} \int_{t_{k_n}}^{t_{(\!k_{\!(\!n\!+\!1\!)}-\!1\!)}}  \! w(t) \sup_{z\in[t,t_{k_{(\!n\!+\!1\!)}}]} \Uq(t,z) \lt \int_z^\infty \!\! \vp\!(s)\ds \rt^\rpp \!\! \dt \label{251}\\
            & \quad +  \sum_{n\in\A} \lt \sum_{j=k_{(\!n\!-\!1\!)}}^{k_n\!-\!1} \hspace{-6pt} \Tj \Uj \rt^\rp \hspace{-6pt} \int_{t_{k_n}}^{t_{(\!k_{\!(\!n\!+\!1\!)}-\!1\!)}}  \!\!\! w(t)\dt \hspace{-6pt} \sup_{z\in[t_{k_{(\!n\!+\!1\!)}},\infty)} \hspace{-9pt} \Uq(t_{k_{(\!n\!+\!1\!)}},z) \lt \int_z^\infty \!\!\!\! \vp\!(s)\ds \rt^\rpp \nonumber\\
            & \lesssim \sum_{n\in\A} \lt \sum_{j=k_{(\!n\!-\!1\!)}}^{k_n\!-\!1} \hspace{-6pt} \Tj \Uj \rt^\rp \hspace{-4pt} \int_{t_{k_n}}^{t_{(\!k_{\!(\!n\!+\!1\!)}-\!1\!)}}  \! w(t) \Uq(t,t_{(\!k_{(\!n\!+\!1\!)}\!-1\!)}) \dt\, \lt \int_{t_{k_n}}^\infty \!\! \vp\!(s)\ds \rt^\rpp \label{252}\\
            & \quad +  \sum_{n\in\A} \lt \sum_{j=k_{(\!n\!-\!1\!)}}^{k_n\!-\!1} \hspace{-6pt} \Tj \Uj \rt^\rp \hspace{-2pt} \Theta^{k_{(\!n\!+\!1\!)}\!-1} \hspace{-8pt} \sup_{z\in[t_{k_{(\!n\!+\!1\!)}},\infty)} \hspace{-8pt} \Uq(t_{k_{(\!n\!+\!1\!)}},z) \lt \int_z^\infty \!\!\! \vp\!(s)\ds \rt^\rpp \nonumber\\
            & \lesssim \sum_{n=1}^{N} \lt \sum_{j=k_{(\!n\!-\!1\!)}}^{k_n\!-\!1} \hspace{-2pt} \Tj \Uj \rt^\rq \! \lt \int_{t_{k_n}}^\infty \! \vp\!(s)\ds \rt^\rpp \label{253}\\
            & \quad +  \sum_{n=1}^{N\!-\!1} \lt \sum_{j=k_{(\!n\!-\!1\!)}}^{k_n\!-\!1} \hspace{-6pt} \Tj \Uj \rt^\rp \hspace{-2pt} \Theta^{k_{(\!n\!+\!1\!)}\!-1} \hspace{-8pt} \sup_{z\in[t_{k_{(\!n\!+\!1\!)}},\infty)} \hspace{-8pt} \Uq(t_{k_{(\!n\!+\!1\!)}},z) \lt \int_z^\infty \!\!\! \vp\!(s)\ds \rt^\rpp \nonumber\\
            &  =       \lt \Theta^\mu \Uq(\Delta_\mu) \rt^\rq \lt \int_{t_{k_{1}}}^\infty \vp\!(s)\ds \rt^\rpp  + \B{15}+ \B{13} \label{254} \\
            & \lesssim \lt \int_0^{\tm} w(t)\dt\, \Uq(\Delta_\mu) \rt^\rq \lt \int_{t_{k_{1}}}^\infty \vp\!(s)\ds \rt^\rpp  + \B{15} + \B{13} \label{255} \\
            & \lesssim \lt \int_0^{t_{(\!\mu\!+\!1\!)}} w(t) \Uq(t,t_{(\!\mu\!+\!1\!)}) \dt \rt^\rq \lt \int_{t_{k_{1}}}^\infty \vp\!(s)\ds \rt^\rpp  + \B{15} + \B{13}\nonumber \\
            & \lesssim D_2. \label{256}
    \end{align} 
  Estimate \eqref{250} is granted by \eqref{57}, and estimate \eqref{251} by Proposition \ref{59}. Step \eqref{252} is based on \eqref{18}. In \eqref{253} we again applied \eqref{57}. To get the relations \eqref{254} and \eqref{255}, we used \eqref{007} and \eqref{18}, respectively. The final inequality \eqref{256} follows from the already known relations $\B{15} \lesssim D_2$ and $\B{13} \lesssim D_2$. We have shown
    \[  
      \B{9} \lesssim D_2,
    \]
  and thus also
    \[
      \int_{\tm}^\infty \lt \int_{\tm}^t w(x) \Uq(x,t) \dx \rt^\rp w(t) \sup_{z\in[t,\infty)} \Uq(t,z) \lt \int_z^\infty \! \vp\!(s)\ds \rt^\rpp \!\! \dt\ \lesssim \B{8} + \B{9} \lesssim D_1 + D_2.
    \]
  If we combine this inequality with \eqref{43}, we reach
    \begin{align}
      & \int_{\tm}^\infty \lt \int_0^t w(x)\dx \rt^\rp w(t) \lt \int_t^\infty \Upp(t,z)\vp(z)\dz \rt^\rpp \dt \nonumber\\ 
      & \quad + \int_{\tm}^\infty \lt \int_{\tm}^t w(x) \Uq(x,t) \dx \rt^\rp w(t) \sup_{z\in[t,\infty)} \Uq(t,z) \lt \int_z^\infty \vp(s)\ds \rt^\rpp \dt \nonumber\\
      & \lesssim  D_1 + D_2. \nonumber
    \end{align}
  The constant related to the symbol ``$\lesssim$'' in here does not depend on the choice of $\mu$, thus passing $\mu\to -\infty$ (notice $\tm\to 0$ as $\mu\to -\infty$) and applying the monotone convergence theorem yields 
    \[
      A_1 + A_2 \lesssim D_1 + D_2.
    \]
  We have so far assumed that $\int_0^\infty w(x)\dx = \Theta^K$ for a~$K\in\Z$. The result is extended to general weights $w$ by the same procedure as the one used at the end of the proof of the implication ``(ii)$\Rightarrow$(i)''. The proof of the whole theorem is now complete.
\end{proof}

\begin{proof}[of Theorem \ref{144}]
  Theorem \ref{144} is proved in almost exactly the same way as Theorem \ref{7}. The difference is just in the use of appropriate ``limit variants'' of certain expressions for $p=1$. Namely, 
    \[
      \lt \int_y^z \Upp(y,x) \vp(x) \dx \rt^\jpp \textnormal{\quad is\ replaced\ by\quad} \esssup_{x\in(y,z)} U(y,x)v^{-1}(x)
    \]
  and
    \[
      \lt \int_y^z \vp(x)\dx \rt^\jpp \textnormal{\quad is\ replaced\ by\quad} \esssup_{x\in(y,z)} v^{-1}(x),
    \]
  whenever these expressions appear with some $0\le y<z\le\infty$. To clarify the correspondence between $A_2$ and $A_4$, let us note that
    \[
      \sup_{z\in[t,\infty)} \Uq(t,z) \esssup_{s\in(z,\infty)} v^{q'}(s) = \esssup_{s\in(t,\infty)} v^{q'}(s) \sup_{z\in[t,s)} \Uq(t,z) = \esssup_{s\in(t,\infty)} \Uq(t,s) v^{q'}(s)
    \]
  is true for all $t>0$. Naturally, the limit variant of Proposition \ref{107} for $p=1$ is used in the proof as well. All the estimates are then analogous to their counterparts in the proof of Theorem \ref{7}. Therefore, we do not repeat them in here.
\end{proof}

\begin{rem}
  (i) Theorem \ref{7}, which relates to the inequality \eqref{8}, i.e.~to the operator $H^*$, is the one proved here, while the result for $H$ (i.e.~for \eqref{8*}) is presented as Corollary \ref{7*}. Of course, the opposite order could have been chosen, since the version with $H$ instead of $H^*$ can be proved in an~exactly analogous way. As mentioned before, the variants for $H$ and $H^*$ are equivalent by a~change of variables in the integrals. The reason why the proof of the ``dual'' version is shown here is that the discretization-related notation is then the same as in \cite{K6}.

  (ii) Discretization based on finite covering sequences is used here, although the double-infinite (indexed by $\Z$) variant is far more usual in the literature (cf.~\cite{GS,L,S2}). The advantage of the finite version is that the proof works for $L^1$-weights $w$ and then it is easily extrapolated for the non-$L^1$ weights by the final approximation argument. In order to work with infinite partitions, one needs to assume $w\notin L^1$. The pass to the $L^1$-weights then cannot be done in such an~easy way as in the opposite order. The authors usually omit the case $w\in L^1$ (see e.g.~\cite{GS}). Besides that, there is no essential difference between in the techniques based on finite and infinite partitions. 

  (iii) In Theorems \ref{7} and \ref{144}, the equivalence ``(i)$\Leftrightarrow$(ii)'' was known before \cite{L} and it is reproved here using another method than in \cite{L}. The main achievement is the equivalence ``(i)$\Leftrightarrow$(iii)'' which can also be proved directly, by the same technique and without need for the discrete $D$-conditions (cf.~\cite{K6}). Doing so would however require constructing more different special functions (such as $g$ and $h$ in the ``(i)$\Rightarrow$(ii)'' part of Theorem \ref{7}) and therefore also introducing additional notation. 

  (iv) The kernel $U$ is not assumed to be continuous. However, for every $t>0$ the function $U(t,\cdot)$ is nondecreasing, hence continuous almost everywhere on $(0,\infty)$. Thus, so is the function $\Uq(t,\cdot)\lt \int_\cdot^\infty \vp(s)\ds \rt^\rpp$. Therefore, the value of the expression $A_2$ remains unchanged if ``$\sup_{z\in[t,\infty)}\!$'' in there is replaced by ``$\esssup_{z\in[t,\infty)}\!$''. Although the latter variant may seem to be the ``proper'' one, both are correct in this case. Besides that, the range $z\in[t,\infty)$ in the supremum or essential supremum may obviously be replaced by $z\in(t,\infty)$ without changing the value of $A_2$. 
  \end{rem}

\section{Applications}

The integral conditions for the boundedness $H:L^p(v)\to L^q(w)$ with $0<q<1\le p<\infty$ may be used to complete \cite[Theorem 5.1]{GS} with two missing cases. (These cases are in fact included in \cite{GS} but covered there only by discrete conditions.) 

Denote by $\MMM$ the cone of all nonnegative nonincreasing functions on $(0,\infty)$. The result then reads as follows.

\begin{thm}\label{800} 
  Let $u$, $v$, $w$ be weights, $0<q<p<\infty$,\, $q<1$ and $r=\frac{pq}{p-q}$.
    \begin{itemize}
      \item[\rm(i)]
        Let $0<p\le 1$. Then the inequality 
          \begin{equation}\label{I1}
            \lt \int_0^\infty \lt \int_t^\infty f(s)u(s)\ds \rt^q w(t) \dt \rt^\jq \le C \lt \int_0^\infty f^p(t) v(t) \dt \rt^\jp
          \end{equation}
        holds for all $f\in\MMM$ if and only if
          \[
            A_5 := \lt \int_0^\infty \lt \int_0^t w(x)\dx \rt^\rp w(t) \sup_{z\in(t,\infty)} \lt \int_t^z u(s)\ds \rt^r \lt \int_0^z v(y)\dy \rt^{-\rp}\dt \rt^\jr <\infty
          \]
        and
          \[
            A_6 := \lt \int_0^\infty \! \lt \int_0^t \!\! w(x) \lt \int_x^t \!\!u(s)\ds \rt^{\!q} \!\! \dx \rt^{\!\rp} \!\! w(t) \sup_{z\in(t,\infty)} \lt \int_t^z \!\! u(s)\ds \rt^{\!q} \! \lt \int_0^z \!\! v(y)\dy \rt^{\!\!-\rp} \!\!\!\! \dt \rt^{\!\jr} \!\! < \infty.
          \]
        Moreover, the least constant $C$ such that \eqref{I1} holds for all $f\in\MMM$ satisfies $C\approx A_5 + A_6$.
      \item[\rm(ii)]
        Let $p>1$. Then \eqref{I1} holds for all $f\in\MMM$ if and only if $A_6<\infty$,
          \[
            A_7 := \lt \int_0^\infty \! \lt \int_0^t \! w(x)\dx \rt^\rp \! w(t) \lt \int_t^\infty \! \lt \int_t^z \! u(s)\ds \rt^{p'} \! \lt \int_0^z \!\! v(y)\dy \rt^{-p'} \!\!\! v(z) \dz \rt^{\rpp}\!\!\! \dt \rt^\jr \!\! <\infty
          \]
        and $A_8<\infty$, where
          \[
            A_8 := \begin{cases} 
                             \displaystyle\ \, \lt \int_0^\infty \!\! w(t) \lt \int_0^t u(s) \ds \rt^q \dt \rt^\jq \lt \int_0^\infty \!\! v(y)\dy \rt^{\!-\jp} \!\! <\infty & \textit{if\ } \displaystyle \int_0^\infty v(y)\dy < \infty,\\ 
                             \displaystyle\ \, 0 & \textit{if\ } \displaystyle \int_0^\infty \!\! v(y)\dy = \infty.
                   \end{cases}         
          \]
        Moreover, the least constant $C$ such that \eqref{I1} holds for all $f\in\MMM$ satisfies $C\!\approx\! A_6 \!+ \!A_7\! + \!A_8$.    
    \end{itemize}
\end{thm}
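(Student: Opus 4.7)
The strategy is to reduce \eqref{I1}, posed on the cone $\MMM$, to a Hardy-type inequality on $\MM$ to which the main results of Section 3 apply. The key device is the parametrization $f(t)=f_\infty+\int_t^\infty h(s)\,ds$ with $f_\infty:=\lim_{t\to\infty}f(t)\ge 0$ and $h\in\MM$, which (after a standard approximation argument) describes all of $\MMM$. By Fubini,
\[
\int_t^\infty f(s)u(s)\,ds \;=\; f_\infty\int_t^\infty u(s)\,ds + \int_t^\infty U^\ast(t,y)\,h(y)\,dy,
\]
where $U^\ast(t,y):=\int_t^y u(s)\,ds$ is a $1$-regular kernel in the sense of conditions (i)--(iii) of the introduction. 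Thus \eqref{I1} splits into a Hardy-type inequality for $h$ with kernel $U^\ast$ and an endpoint bound for the constant term $f_\infty$.

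For Part (i) at $p=1$, a second Fubini gives $\int_0^\infty f v\,dt=f_\infty\int_0^\infty v+\int_0^\infty h(s)V(s)\,ds$ with $V(s):=\int_0^s v$, so the Hardy-type part is governed by Theorem \ref{144} applied with kernel $U^\ast$ and weight $V$ in place of $v$. Using $-q'=r$ (valid at $p=1$), the conditions $A_3$, $A_4$ of that theorem translate to exactly $A_5^{\,r}$ and $A_6^{\,r}$; the $\sup/\esssup$ equivalence is supplied by Remark (iv). For $0<p<1$ the same substitution works, with the $L^p(v)$-quasi-norm of $\int_\cdot^\infty h$ estimated via a level-set/monotonicity argument so that Theorem \ref{144} again yields $A_5+A_6$. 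The $f_\infty$-endpoint is automatically absorbed: when $v\notin L^1$ it forces $f_\infty=0$, and when $v\in L^1$ the requirement is already implied by $A_5$.

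For Part (ii) one applies Theorem \ref{7} instead. Since $\|\int_\cdot^\infty h\|_{L^p(v)}$ cannot be rewritten as a weighted $L^p$-norm of $h$ when $p>1$, a Sawyer-type duality (replacing $v$ by a weight of the form $v\,V^{-p'}$ in the integral conditions of Theorem \ref{7}) is used to deduce the characterization; this produces the integral condition $A_7$ as the natural ``dual'' of the supremum-type $A_5$, while $A_6$ remains unchanged. The constant-at-infinity piece is no longer absorbed automatically: constants belong to $L^p(v)$ precisely when $v\in L^1$, and the resulting endpoint requirement takes exactly the form of $A_8$. The main obstacle is matching the generic $A_1,A_2$-conditions of Theorems \ref{7}/\ref{144} (applied to $U^\ast$ with transformed weight) to the explicit $A_5,\ldots,A_8$; this requires careful exponent-tracking and repeated uses of the $1$-regularity of $U^\ast$ to convert between integrals and suprema involving $(\int_t^z u)^\alpha(\int_0^z v)^\beta$. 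The nonstandard treatment of the Hardy norm for $0<p<1$, and the separate derivation of $A_8$ for $p>1$, are the most delicate points.
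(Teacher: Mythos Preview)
Your outline is essentially on track for $p=1$ in part (i) and for part (ii): the parametrization $f=f_\infty+\int_\cdot^\infty h$ together with Fubini (for $p=1$) or Sawyer-type duality (for $p>1$) is precisely the reduction the paper invokes via \cite[Theorems 2.1 and 4.1]{GS}, after which Theorems \ref{144} and \ref{7} apply with the $1$-regular kernel $U^\ast(t,y)=\int_t^y u$.

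There is, however, a genuine gap in your treatment of $0<p<1$. After the substitution, the right-hand side becomes $\bigl(\int_0^\infty(\int_t^\infty h)^p v\,dt\bigr)^{1/p}$, which is neither an $L^1$-norm of $h$ (so Theorem \ref{144} does not apply) nor an $L^p$-norm with $p>1$ (so Theorem \ref{7} does not apply). A ``level-set/monotonicity argument'' cannot produce a two-sided equivalence between this quantity and $\int_0^\infty h\,V$ for general $h\in\MM$. The paper's route is different: by \cite[Theorem 4.1]{GS}, \eqref{I1} on $\MMM$ is equivalent to the Hardy-type inequality
\[
\Bigl(\int_0^\infty\Bigl(\int_t^\infty\Bigl(\int_t^x u\Bigr)^{p} h(x)\,dx\Bigr)^{q/p} w(t)\,dt\Bigr)^{p/q}\ \le\ C^p\int_0^\infty h(s)\,V(s)\,ds
\]
for all $h\in\MM$, i.e.\ one passes to the exponent pair $(q/p,1)$ and the modified kernel $\widetilde U(x,y)=\bigl(\int_x^y u\bigr)^p$, which is $2^p$-regular. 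Now Theorem \ref{144} applies with $q$ replaced by $q/p$ and $v$ replaced by $V$; since $-(q/p)'=r/p$ and $\widetilde U^{\,q/p}=(\int_x^y u)^q$, the resulting conditions are exactly $A_5^{\,r}$ and $A_6^{\,r}$. This is the missing idea in your part (i).

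A smaller point in part (ii): the condition $A_2$ of Theorem \ref{7}, applied with $v^{1-p'}$ replaced by $V^{-p'}v$, does \emph{not} produce $A_6$ directly but rather
\[
A_6^\ast:=\Bigl(\int_0^\infty\Bigl(\int_0^t w(x)\Bigl(\int_x^t u\Bigr)^q dx\Bigr)^{r/p} w(t)\sup_{z>t}\Bigl(\int_t^z u\Bigr)^q\Bigl(\int_z^\infty V^{-p'}v\Bigr)^{r/p'} dt\Bigr)^{1/r},
\]
and one must use the elementary comparison $\int_z^\infty V^{-p'}v+\bigl(\int_0^\infty v\bigr)^{1-p'}\approx V(z)^{1-p'}$ to obtain $A_6^\ast\lesssim A_6$ and $A_6\lesssim A_6^\ast+A_8$. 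So ``$A_6$ remains unchanged'' is not quite right; the endpoint term $A_8$ enters precisely here.
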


\begin{proof}
  (i) By \cite[Theorem 4.1]{GS}, \eqref{I1} holds for all $f\in\MMM$ if and only if 
    \begin{equation}\label{987}
      \lt \int_0^\infty \lt \int_t^\infty \lt \int_t^x u(s)\ds \rt^p h(x) \dx \rt^\frac qp w(t) \dt \rt^\frac pq \le C^p \int_0^\infty h(s) \int_0^s v(y)\dy \ds 
    \end{equation}
  holds for all $h\in\MM$. In fact, \cite[Theorem 4.1]{GS} is stated with the assumption $\int_0^\infty v(y)\dy = \infty$ which is, however, not used in the proof in \cite{GS}.  Validity of \eqref{987} for all $h\in\MM$ is equivalent to the condition $A_5 + A_6 <\infty$ by Theorem \ref{144}, since $U(x,y)=\lt \int_x^y u(s) \ds \rt^p$ is a~$\theta$-regular kernel (with $\theta=2^p$).

  (ii) By \cite[Theorem 2.1]{GS}, \eqref{I1} holds for all $f\in\MMM$ if and only if $A_8 \le \infty$ and
    \[
      \lt \int_0^\infty \lt \int_t^\infty \int_t^x u(s)\ds\ h(x) \dx \rt^q w(t) \dt \rt^\jq \le C \lt \int_0^\infty h^p(s) \lt \int_0^s v(y)\dy \rt^{p} v^{1-p}(s) \ds \rt^\jp
    \]
  holds for all $h\in\MM$. The latter is, by Theorem \ref{7}, equivalent to the condition $A_6^* + A_7 <\infty$, where
    \[
      A_6^* := \! \lt \! \int_0^{\!\infty} \!\! \lt \! \int_0^t \!\!\! w(x) \lt \! \int_x^t \!\!\!u(s)\ds \rt^{\!q} \!\! \dx \rt^{\!\rp} \!\! w(t) \!\! \sup_{z\in(t,\infty)} \! \lt \int_t^z \!\!\! u(s)\ds \rt^{\!q} \! \lt \! \int_z^{\!\infty} \!\! \lt \int_0^x \! \!\! v(y)\dy \rt^{\!\!-p'} \!\!\!\!\! v(x) \dx \rt^{\!\!\rpp} \!\!\!\! \dt \rt^{\!\!\jr}\!\!\!.
    \]
  Since 
    \[
      \int_z^\infty \lt \int_0^s \!\! v(y)\dy \rt^{-p'}\!\!\! v(s)\ds + \lt \int_0^\infty \!\!\! v(y)\dy \rt^{1-p'} \!\!\!\! \approx\, \lt \int_0^z \!\! v(y)\dy \rt^{1-p'}
    \]
  is satisfied for all $z>0$, it is easy to verify that $A_6^* \lesssim A_6$ and $A_6 \lesssim A_6^* + A_8$. 

  In both cases (i) and (ii), the estimates on the optimal constant $C$ also follow from \cite[Theorem 2.1, Theorem 4.1]{GS} and Theorems \ref{7} and \ref{144}.
\end{proof}

In the case $0<q<p\le 1$, in \cite[Theorem 4.1]{GS} it was shown that \eqref{I1} holds for all $f\in\MMM$ if and only if
  \[ 
    \lt \int_0^\infty \lt \sup_{x\in[t,\infty)} f(x) \int_t^x u(s)\ds \rt^q w(t) \dt \rt^\jq \le C \lt \int_0^\infty f^p(t)v(t)\dt \rt^\jp
  \]         
holds for all $f\in\MMM$. Theorem \ref{800} hence applies to this supremal operator inequality as well. 

Theorem \ref{800} may be further applied to prove certain weighted Young-type convolution inequalities (cf.~\cite{K1}) in parameter settings which could not be reached so far. For this particular application, it is important that the weight $w$ is not involved in any implicit conditions. For more details see \cite{K1}.

As shown e.g.~in \cite[Theorem 4.4]{SS}, certain weighted inequalities restricted to convex functions are equivalently represented by weighted inequalities involving a~Hardy-type operator with the $1$-regular Riemann-Liouville kernel $U(x,y)=(y-x)$. Hence, the results of this paper also provide characterizations of validity of those convex-function inequalities in the case $0<q<1\le p<\infty$.

\end{document}